\documentclass[reqno,12pt,letterpaper]{amsart}
\usepackage[proof]{sdmacros}
%\usepackage[notref,notcite]{showkeys}

%%%%%%%%%%%%%%%%%%%%%%%%%%%%%%%%%%%%%%%%%%%%%%%%%%%%%%%%%%%%%%%%%%%%%%%%%%%%%%%%
\title[Fractal uncertainty principle]%
{An introduction to fractal uncertainty principle}
\author{Semyon Dyatlov}
\email{dyatlov@math.berkeley.edu}
\address{Department of Mathematics, University of California, Berkeley, CA 94720}
\address{Department of Mathematics, Massachusetts Institute of Technology,
77 Massachusetts Ave, Cambridge, MA 02139}
%%%%%%%%%%%%%%%%%%%%%%%%%%%%%%%%%%%%%%%%%%%%%%%%%%%%%%%%%%%%%%%%%%%%%%%%%%%%%%%%

%%%%%%%%%%%%%%%%%%%%%%%%%%%%%%%%%%%%%%%%%%%%%%%%%%%%%%%%%%%%%%%%%%%%%%%%%%%%%%%%
%%%%%%%%%%%%%%%%%%%%%%%%%%%%%%%%%%%%%%%%%%%%%%%%%%%%%%%%%%%%%%%%%%%%%%%%%%%%%%%%
\begin{document}

\begin{abstract}
Fractal uncertainty principle states that no function can be localized
in both position and frequency near a fractal set.
This article provides a review of recent developments on the fractal uncertainty
principle and of their applications to quantum chaos,
including lower bounds on mass of eigenfunctions on negatively curved surfaces
and spectral gaps on convex co-compact hyperbolic surfaces.
\end{abstract}

\maketitle

%%%%%%%%%%%%%%%%%%%%%%%%%%%%%%%%%%%%%%%%%%%%%%%%%%%%%%%%%%%%%%%%%%%%%%%%%%%%%%%%
%                                 INTRODUCTION                                 %
%%%%%%%%%%%%%%%%%%%%%%%%%%%%%%%%%%%%%%%%%%%%%%%%%%%%%%%%%%%%%%%%%%%%%%%%%%%%%%%%
\addtocounter{section}{1}
\addcontentsline{toc}{section}{1. Introduction}

A \emph{fractal uncertainty principle} (FUP) is a statement in harmonic analysis
which can be vaguely formulated as follows (see Figure~\ref{f:fups-intro}):
\begin{center}
No function can be localized in both position and frequency\\
close to a fractal set.
\end{center}
%%%%%%%%%%%%%%%%%%%%%%%%%%%%%%%%%%%%%%%%%%%%%%%%%%%%%%%%%%%%%%%%%%%%%%%%%%%%%%%%
\begin{figure}
\includegraphics[width=7cm]{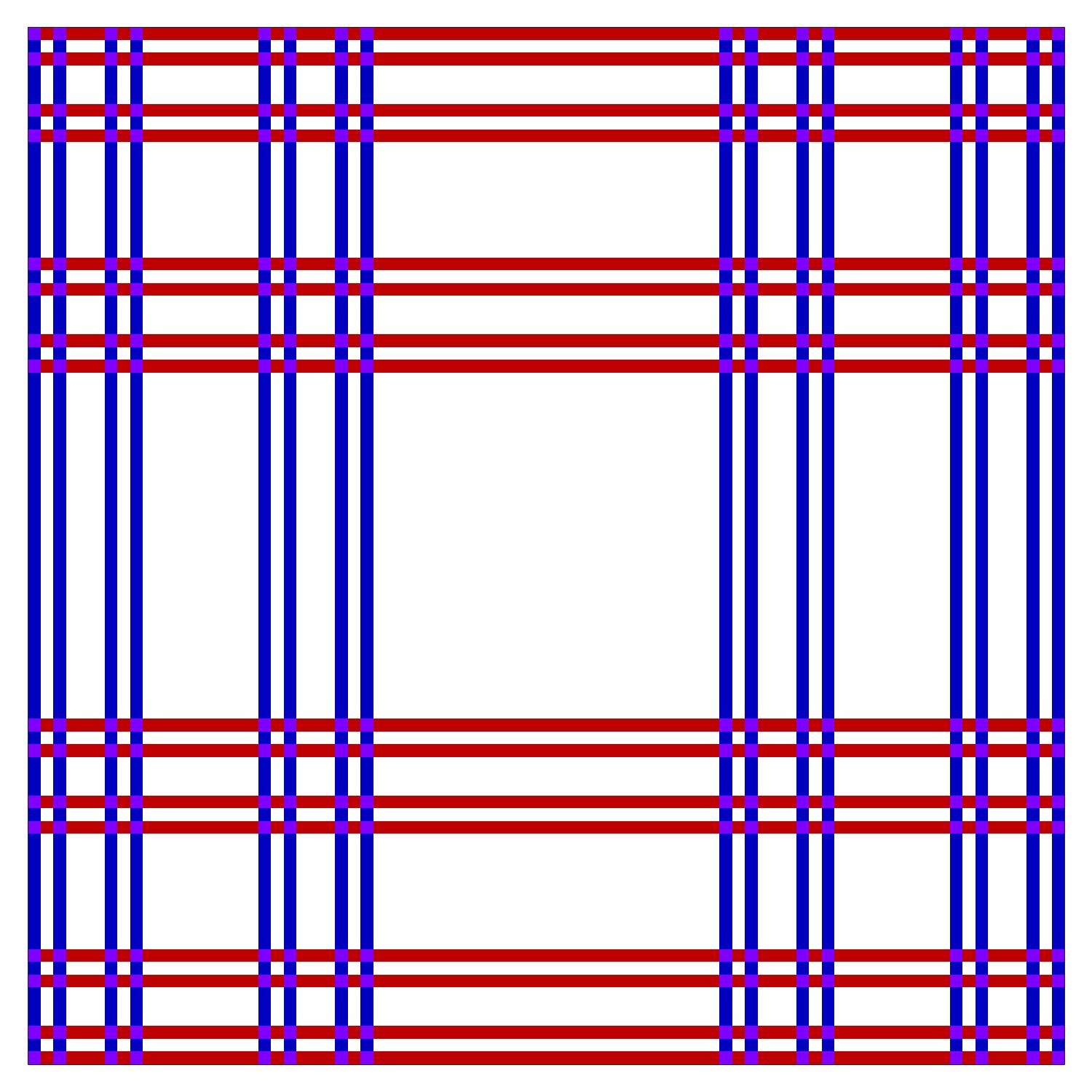}\qquad\quad
\includegraphics[width=7cm]{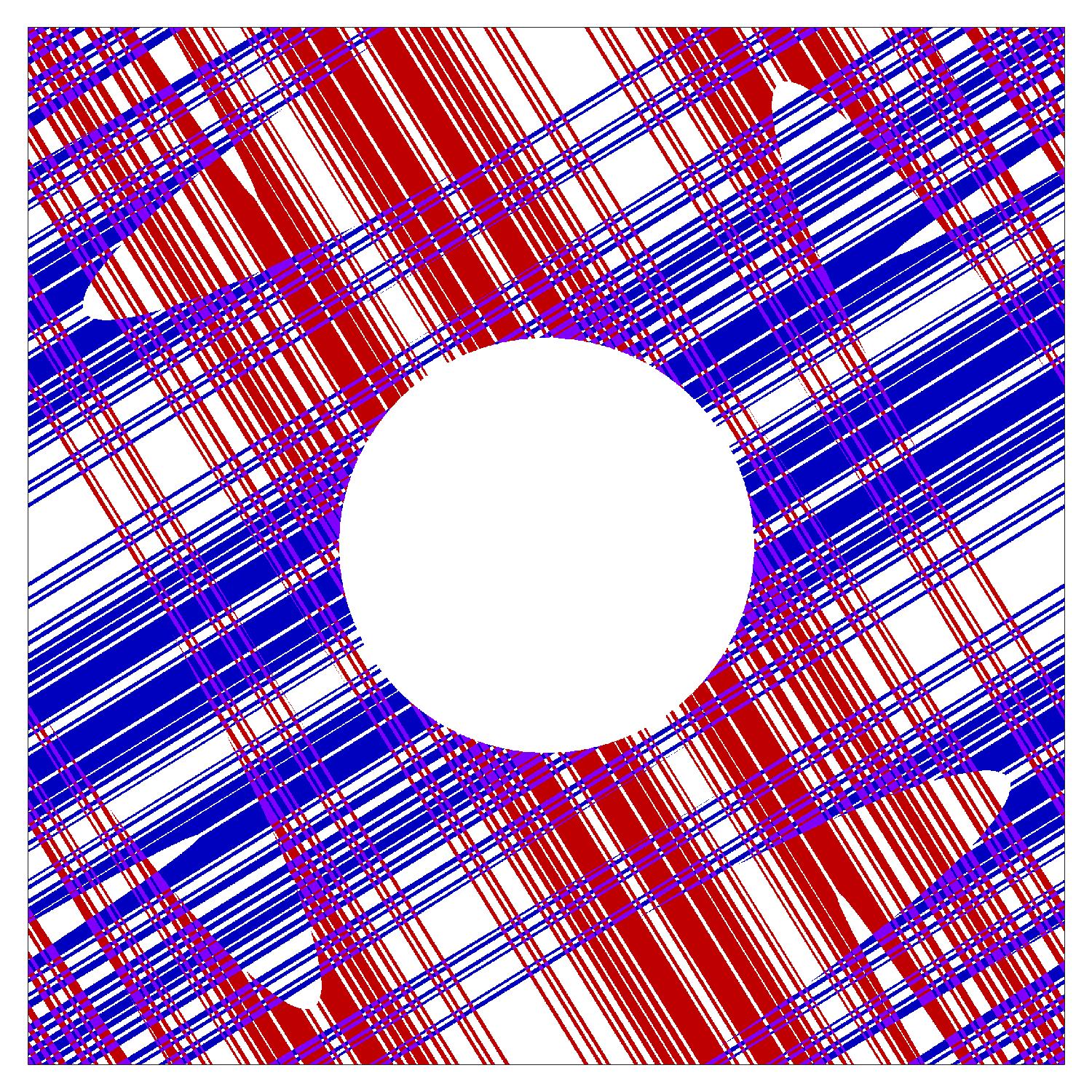}
\caption{Left: a model situation to which FUP applies, with the blue/red sets corresponding
to localization in position/frequency to a neighborhood of the middle third Cantor set.
FUP states that no quantum wavefunction can be localized on both the red and the blue set.
Right: a sample of two fractal sets used in applications of FUP to quantum chaos,
consisting of trajectories of a hyperbolic dynamical system which do not cross some open set
in forward/backward time direction~-- see Figure~\ref{f:porosity} below for details.}
\label{f:fups-intro}
\end{figure}
%%%%%%%%%%%%%%%%%%%%%%%%%%%%%%%%%%%%%%%%%%%%%%%%%%%%%%%%%%%%%%%%%%%%%%%%%%%%%%%%
FUP has been successfully applied to problems in \emph{quantum chaos},
which is the study of quantum systems in situations where the underlying
classical system has chaotic behavior. See the reviews of Marklof~\cite{MarklofReview},
Zelditch~\cite{ZelditchReview}, and
Sarnak~\cite{SarnakQUE} for an overview of this theory for compact systems,
and the reviews of Nonnenmacher~\cite{Nonnenmacher} and Zworski~\cite{ZworskiReview}
for the case of noncompact, or open, systems. Applications of FUP include:
\begin{itemize}
\item lower bounds on mass of eigenfunctions, control for the Schr\"odinger equation,
and exponential decay of damped waves on compact negatively curved surfaces
(see~\S\ref{s:appl-closed});
\item spectral gaps/exponential decay of waves on noncompact hyperbolic surfaces
(see~\S\ref{s:appl-open}).
\end{itemize}
The present article is a broad review of various FUP statements and their applications.
(A previous review article~\cite{fwlEDP} had a more detailed explanation of the proof
of two of the results here, Theorems~\ref{t:fup-porous} and~\ref{t:appl-eig}.)
It is structured as follows:
\begin{itemize}
\item \S\ref{s:fup} gives the main FUP statements (Theorems~\ref{t:fup-0}--\ref{t:hfup-porous}), and briefly discusses the proofs.
It also gives the definitions of fractal sets used throughout the article and
describes Schottky limit sets, which are an important example.
\S\S\ref{s:uncertainty}--\ref{s:fup-statement} are the core of the article,
the later parts of the article are often independent of each other;
\item \S\ref{s:appl} describes applications of FUP to negatively curved surfaces;
\item \S\ref{s:cantor} considers the special class of discrete Cantor sets,
giving a complete proof of FUP in this setting;
\item \S\ref{s:relator} studies the relation of FUP to Fourier decay and additive energy improvements
for fractal sets;
\item \S\ref{s:high-dim} discusses generalizations of FUP to higher dimensions,
which are largely not known at this point.
\end{itemize}
In addition to a review of known results, we state several open problems
(Conjectures~\ref{c:improve-0}, \ref{c:genericdil}, \ref{c:ae-schottky}, \ref{c:fup-curved}, and~\ref{c:hd-fup-disc}) and provide figures with numerical evidence for both the known results and the conjectures. We also provide a more detailed exposition of a few topics:
\begin{itemize}
\item the relation between regular and porous sets (Proposition~\ref{l:regular-porous});
\item reduction of FUP with a general phase to FUP for the Fourier transform
(\S\ref{s:hfup});
\item a proof of FUP for discrete Cantor sets (\S\ref{s:fup-cantor-proof});
\item a proof of a special case of the recent result of Han--Schlag~\cite{HanSchlag}
in the setting of two-dimensional discrete Cantor sets (Proposition~\ref{l:hds-2}).
\end{itemize}

\pagebreak

%%%%%%%%%%%%%%%%%%%%%%%%%%%%%%%%%%%%%%%%%%%%%%%%%%%%%%%%%%%%%%%%%%%%%%%%%%%%%%%%
%%%%%%%%%%%%%%%%%%%%%%%%%%%%%%%%%%%%%%%%%%%%%%%%%%%%%%%%%%%%%%%%%%%%%%%%%%%%%%%%
\section{General results on FUP}
\label{s:fup}

%%%%%%%%%%%%%%%%%%%%%%%%%%%%%%%%%%%%%%%%%%%%%%%%%%%%%%%%%%%%%%%%%%%%%%%%%%%%%%%%
\subsection{Uncertainty principle}
  \label{s:uncertainty}
  
Before going fractal, we briefly review the standard uncertainty principle.
Fix a small parameter $h>0$,
called the \emph{semiclassical parameter},
and consider the unitary semiclassical Fourier transform
\begin{equation}
  \label{e:F-h}
\mathcal F_h:L^2(\mathbb R)\to L^2(\mathbb R),\quad
\mathcal F_h f(\xi)=(2\pi h)^{-1/2}\int_{\mathbb R}e^{-ix\xi/h} f(x)\,dx.
\end{equation}
The version of the uncertainty principle we use is the following:
for any $f\in L^2(\mathbb R)$, either $f$ or its Fourier transform $\mathcal F_h f$
have little mass on the interval $[0,h]$.%
\footnote{This is consistent with the
uncertainty principle in quantum mechanics. Indeed, if both $f$ and $\mathcal F_h f$
are large on $[0,h]$ then we know the wave function $f$ is at position and
momentum 0 with precision~$h$, but $h\cdot h\ll h$.}
Specifically we have
\begin{equation}
  \label{e:basic-fup}
\|\indic_{[0,h]}\mathcal F_h\indic_{[0,h]}\|_{L^2(\mathbb R)\to L^2(\mathbb R)}\leq h^{1/2}.
\end{equation}
Here for $X\subset \mathbb R$, we denote by $\indic_X:L^2(\mathbb R)\to L^2(\mathbb R)$
the multiplication operator by the indicator function of $X$. One way to prove~\eqref{e:basic-fup}
is via H\"older's inequality:
\begin{equation}
  \label{e:basic-fup-proof}
\begin{aligned}
\|\indic_{[0,h]}\mathcal F_h\indic_{[0,h]}\|_{L^2\to L^2}
&\leq
\|\indic_{[0,h]}\|_{L^\infty\to L^2}\cdot \|\mathcal F_h\|_{L^1\to L^\infty}
\cdot \|\indic_{[0,h]}\|_{L^2\to L^1}
\\&= h^{1/2}\cdot (2\pi h)^{-1/2} \cdot h^{1/2}\leq h^{1/2}.
\end{aligned}
\end{equation}
A useful way to think about the norm bound~\eqref{e:basic-fup}
is as follows: if a function $f$ is supported in $[0,h]$,
then the interval $[0,h]$ contains at most $h$ of the $L^2$ mass of $\mathcal F_hf$
(here we use the convention that the mass is the square of the $L^2$ norm).

The fractal uncertainty principle studied below concerns localization
in position and frequency on more general sets:
%%%%%%%%%%%%%%%%%%%%%%%%%%%%%%%%%%%%%%%%%%%%%%%%%%%%%%%%%%%%%%%%%%%%%%%%%%%%%%%%
\begin{defi}
  \label{d:fup}
Let $X,Y\subset\mathbb R$ be $h$-dependent families of sets. We say
that $X,Y$ satisfy \textbf{uncertainty principle} with exponent $\beta\geq 0$,
if
\begin{equation}
  \label{e:fup}
\|\indic_X \mathcal F_h\indic_Y\|_{L^2(\mathbb R)\to L^2(\mathbb R)}=\mathcal O(h^\beta)\quad\text{as }
h\to 0.
\end{equation}
\end{defi}
%%%%%%%%%%%%%%%%%%%%%%%%%%%%%%%%%%%%%%%%%%%%%%%%%%%%%%%%%%%%%%%%%%%%%%%%%%%%%%%%

%%%%%%%%%%%%%%%%%%%%%%%%%%%%%%%%%%%%%%%%%%%%%%%%%%%%%%%%%%%%%%%%%%%%%%%%%%%%%%%%
\subsection{Fractal sets}
  \label{s:fractals}
  
We now give two definitions of a `fractal set' in $\mathbb R$. A more restrictive
definition would be to require self-similarity under a group of transformations
and this is true in some important examples (see~\S\ref{s:schottky} and~\S\ref{s:cantor}). However, here
we use a more general class of sets which have `fractal structure' at every point
and at a range of scales. To introduce those we use \emph{intervals},
which are sets of the form $I=[a,b]\subset\mathbb R$ where $a<b$.
The length of an interval is denoted by $|I|=b-a$.

The first definition we give is that of a regular set
of dimension $\delta$, or \emph{$\delta$-regular set}:
%%%%%%%%%%%%%%%%%%%%%%%%%%%%%%%%%%%%%%%%%%%%%%%%%%%%%%%%%%%%%%%%%%%%%%%%%%%%%%%%
\begin{defi}
  \label{d:delta-regular}
Assume that $X\subset\mathbb R$ is a nonempty closed set and $0\leq \delta\leq 1$,
$C_R\geq 1$, $0\leq\alpha_{\min}\leq\alpha_{\max}\leq \infty$. We say
that $X$ is \textbf{$\delta$-regular with constant $C_R$ on scales
$\alpha_{\min}$ to $\alpha_{\max}$} if
there exists a locally finite measure $\mu_X$ supported on $X$ such that
for every interval $I$ centered at a point in $X$
and such that $\alpha_{\min}\leq |I|\leq \alpha_{\max}$
we have
\begin{equation}
  \label{e:dr-def}
C_R^{-1}|I|^\delta\leq \mu_X(I)\leq C_R|I|^\delta.
\end{equation}
\end{defi}
%%%%%%%%%%%%%%%%%%%%%%%%%%%%%%%%%%%%%%%%%%%%%%%%%%%%%%%%%%%%%%%%%%%%%%%%%%%%%%%%
\begin{rema}
In applications the precise value of $C_R$ is typically irrelevant:
instead we consider a family of $\delta$-regular sets depending
on a parameter $h\to 0$ and it is important that $C_R$ is independent of~$h$.
\end{rema}
%%%%%%%%%%%%%%%%%%%%%%%%%%%%%%%%%%%%%%%%%%%%%%%%%%%%%%%%%%%%%%%%%%%%%%%%%%%%%%%%
\begin{rema}
From~\eqref{e:dr-def} we deduce that $\mu_X(I)\leq 2C_R|I|^\delta$
for any interval $I$ (not necessarily centered on~$X$)
with $\alpha_{\min}\leq |I|\leq \alpha_{\max}$. Indeed,
$I\cap X\subset I_1\cup I_2$ where $I_j$ is the interval of length $|I|$
centered at $x_j$ and $x_1=\min(I\cap X)$, $x_2=\max(I\cap X)$.
\end{rema}
%%%%%%%%%%%%%%%%%%%%%%%%%%%%%%%%%%%%%%%%%%%%%%%%%%%%%%%%%%%%%%%%%%%%%%%%%%%%%%%%
\begin{exam} Here are some basic examples of $\delta$-regular sets
(where $\alpha>0$):
\begin{enumerate}
\item the set $\{0\}$ is $0$-regular on scales $0$ to $\infty$
with constant~1;
\item the set $\mathbb R$ is $1$-regular on scales $0$ to $\infty$ with constant~1;
\item the set $[0,\alpha]$ is $0$-regular on scales $\alpha$ to $\infty$ with constant~2;
\item the set $[0,\alpha]$ is $1$-regular on scales $0$ to $\alpha$ with constant~2;
\item the set $\{0\}\cup [1,2]$ is \textbf{not} $\delta$-regular on scales~0 to~1
for any choice of $\delta,C_R$.
\end{enumerate}
\end{exam}
%%%%%%%%%%%%%%%%%%%%%%%%%%%%%%%%%%%%%%%%%%%%%%%%%%%%%%%%%%%%%%%%%%%%%%%%%%%%%%%%
Examples~(3) and~(4) above demonstrate that the effective dimension of a set
may depend on the scale: the interval $[0,\alpha]$ looks like a point on scales above $\alpha$
and like the entire real line on scales below $\alpha$. Example~(5) shows that not
every set has a dimension in the sense of Definition~\ref{d:delta-regular}.
On the other hand, one can show that no nonempty set can be $\delta$-regular
on scales $0$ to~1 with two different values of~$\delta$; that is, if a dimension in
the sense of Definition~\ref{d:delta-regular} exists, then it is unique.

A more interesting example is given by
%%%%%%%%%%%%%%%%%%%%%%%%%%%%%%%%%%%%%%%%%%%%%%%%%%%%%%%%%%%%%%%%%%%%%%%%%%%%%%%%
\begin{exam}
\label{x:cantor-3} The middle third Cantor set
$$
X:=\bigg\{\sum_{j=1}^\infty a_j 3^{-j}\,\,\bigg|\,\, a_1,a_2,\dots\in \{0,2\}\bigg\}\ \subset\ [0,1]
$$
is $\log_3 2$-regular on scales 0 to~1 with constant~2.
(To show this we can use that
$\mu(I)=2^{-j}$
for any interval $I$ of length $2\cdot 3^{-j}$, $j\in\mathbb N_0$,
centered at a point in~$X$,
where $\mu$ is the Cantor measure.)
\end{exam}
%%%%%%%%%%%%%%%%%%%%%%%%%%%%%%%%%%%%%%%%%%%%%%%%%%%%%%%%%%%%%%%%%%%%%%%%%%%%%%%%

Our second definition of a `fractal set' is more general.
Rather than requiring the same dimension at each point it
asks for the set to have gaps, or pores, and is a quantitative
version of being nowhere dense:
%%%%%%%%%%%%%%%%%%%%%%%%%%%%%%%%%%%%%%%%%%%%%%%%%%%%%%%%%%%%%%%%%%%%%%%%%%%%%%%%
\begin{defi}
  \label{d:porous}
Assume that $X\subset\mathbb R$ is a closed set and
$\nu>0$, $0\leq\alpha_{\min}\leq\alpha_{\max}\leq \infty$.
We say that $X$ is \textbf{$\nu$-porous on scales $\alpha_{\min}$
to $\alpha_{\max}$} if for each interval $I$
such that
$\alpha_{\min}\leq |I|\leq\alpha_{\max}$, there exists
an interval $J\subset I$ such that $|J|=\nu|I|$ and
$X\cap J=\emptyset$.
\end{defi}
%%%%%%%%%%%%%%%%%%%%%%%%%%%%%%%%%%%%%%%%%%%%%%%%%%%%%%%%%%%%%%%%%%%%%%%%%%%%%%%%
\begin{rema}
As with the regularity constant $C_R$,
the precise value of $\nu$ will typically not be of importance.
\end{rema}
%%%%%%%%%%%%%%%%%%%%%%%%%%%%%%%%%%%%%%%%%%%%%%%%%%%%%%%%%%%%%%%%%%%%%%%%%%%%%%%%
\begin{exam}
The middle third Cantor set is $\nu$-porous on scales 0 to~$\infty$
for any $\nu<{1\over 5}$.
\end{exam}
%%%%%%%%%%%%%%%%%%%%%%%%%%%%%%%%%%%%%%%%%%%%%%%%%%%%%%%%%%%%%%%%%%%%%%%%%%%%%%%%
The next proposition establishes a partial equivalence between
the notions of regularity and porosity by showing
that porous sets can be characterized as subsets of $\delta$-regular sets
with $\delta<1$:
%%%%%%%%%%%%%%%%%%%%%%%%%%%%%%%%%%%%%%%%%%%%%%%%%%%%%%%%%%%%%%%%%%%%%%%%%%%%%%%%
\begin{prop}
  \label{l:regular-porous}
Fix $0\leq\alpha_{\min}\leq\alpha_{\max}\leq\infty$.

1. Assume that $X$ is $\delta$-regular with constant
$C_R$ on scales $\alpha_{\min}$ to $\alpha_{\max}$, and $\delta<1$. Then
$X$ is $\nu$-porous on scales $C\alpha_{\min}$ to $\alpha_{\max}$
where $\nu>0$ and $C$ depend only on $\delta,C_R$.

2. Assume that $X$ is $\nu$-porous on scales
$\alpha_{\min}$ to $\alpha_{\max}$. Then $X$ is contained
in some set $Y\subset\mathbb R$ which is $\delta$-regular with constant
$C_R$ on scales $\alpha_{\min}$ to $\alpha_{\max}$ where $\delta<1$
and $C_R$ depend only on $\nu$.
\end{prop}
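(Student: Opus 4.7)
The two parts are independent; Part~1 is a covering/packing argument and Part~2 requires a Moran-type construction.

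\emph{Part 1: regular implies porous.} I argue by contradiction: fix $\nu>0$ to be chosen below and suppose some interval $I$ with $|I|\in[\alpha_{\min}/\nu,\alpha_{\max}]$ has no subinterval of length $\nu|I|$ disjoint from $X$. Partition $I$ into $N\sim 1/\nu$ consecutive pieces of length $\sim 2\nu|I|$. The middle $\nu|I|$-long sub-piece of each meets $X$ by assumption, so I can extract points $x_1,\dots,x_N\in X\cap I$ whose consecutive distances are at least $\nu|I|$. The intervals $J_k=[x_k-\nu|I|/3,x_k+\nu|I|/3]$ are then pairwise disjoint, contained in an enlargement of $I$ of size comparable to $|I|$, centered at points of $X$, and have length in the scale range. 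The lower bound in \eqref{e:dr-def} gives $\mu_X(J_k)\geq cC_R^{-1}(\nu|I|)^\delta$, so
$$
\mu_X(2I)\ \geq\ \sum_{k=1}^N\mu_X(J_k)\ \geq\ c' C_R^{-1}\nu^{\delta-1}|I|^\delta
$$
for absolute constants $c,c'>0$. On the other hand the remark following Definition~\ref{d:delta-regular} gives $\mu_X(2I)\leq 2C_R(2|I|)^\delta\leq C''C_R|I|^\delta$. Comparing forces $\nu^{1-\delta}\geq c'''C_R^{-2}$, which fails once $\nu$ is smaller than a constant depending only on $\delta,C_R$. This yields the required porosity with $C=1/\nu$.

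\emph{Part 2: porous implies subset of regular.} The plan is a Moran-type iteration. Fix an integer $M$ with $\nu M\geq 4$ and set
$$
K:=\lceil M(1-\nu/2)\rceil,\qquad \delta:=\frac{\log K}{\log M}<1.
$$
After covering $X$ by countably many intervals of length $\alpha_{\max}$ (localizing if necessary and applying the construction to each separately), I inductively build generations $\mathcal I_n$ of intervals of common length $M^{-n}\alpha_{\max}$ as follows: each $I\in\mathcal I_n$ is split into $M$ equal subpieces; by porosity there is a subinterval $J\subset I$ of length $\nu|I|$ disjoint from $X$, and $J$ contains at least $\lfloor\nu M\rfloor-1\geq M-K$ of those $M$ subpieces, so at most $K$ of them can meet $X$. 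Retain all subpieces meeting $X$ and, if there are fewer than $K$, pad the list up to exactly $K$ children by including arbitrary additional subpieces drawn from outside $J$. Continue down to scale $\alpha_{\min}$ (or indefinitely if $\alpha_{\min}=0$) and let $Y$ be the union of the final generation (resp.\ the limit Cantor set). By construction $X\subset Y$, since no $X$-meeting subpiece is ever discarded.

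\emph{Regularity of $Y$.} Assign mass $K^{-n}$ to each $I\in\mathcal I_n$; the padding step makes every node have exactly $K$ children, so this defines a consistent measure $\mu_Y$ supported on $Y$. For $y\in Y$ and an interval $I'$ of length comparable to $M^{-n}\alpha_{\max}$ centered at $y$, $I'$ meets only $O_M(1)$ intervals of $\mathcal I_n$ and contains at least one, giving $\mu_Y(I')\sim K^{-n}\sim|I'|^\delta$ on scales in $[\alpha_{\min},\alpha_{\max}]$, with implicit constants depending only on $\nu$.

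\emph{Main obstacle.} The conceptually delicate step is Part~2: porosity alone bounds the number of $X$-meeting subpieces at each node only from above, but $\delta$-regularity of $Y$ (as opposed to the right upper box-counting dimension of $X$) requires the branching factor to be \emph{exactly} $K$ at every node, which is exactly what the padding achieves. The dummy subpieces introduced by padding need not meet $X$, so they are trivially amenable to the same subdivision rule at subsequent levels (any sub-subinterval can serve as the ``gap''), and the inclusion $X\subset Y$ is preserved throughout.
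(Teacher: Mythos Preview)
Your argument is correct and follows the same strategy as the paper. Part~1 is essentially identical: the paper partitions $I$ into $T$ equal pieces, finds a point of $X$ in the middle third of each, and compares $\sum_r\mu_X(I_r')\leq\mu_X(I)$ against the lower regularity bound, choosing $T$ so that $T^{1-\delta}>6C_R^2$. One cosmetic difference: the paper arranges the disjoint intervals to sit inside $I$ itself rather than $2I$, which sidesteps the issue of bounding $\mu_X(2I)$ when $2|I|>\alpha_{\max}$ (your $J_k$ in fact also lie in $I$, so you could do the same).

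For Part~2 both arguments are Moran constructions, but the paper's is simpler: it takes $L>2/\nu$ and at each step removes \emph{exactly one} subpiece (any subpiece lying entirely in the gap $J$; there is always at least one since $|I|/L<\nu|I|/2$). Every node then has exactly $L-1$ children and $\delta=\log_L(L-1)$, so the ``main obstacle'' you flag---achieving a constant branching factor---never arises and no padding is needed. Your padding route also works, but the stipulation that padding pieces be ``drawn from outside $J$'' is both inessential and in general unsatisfiable: the number of subpieces not contained in $J$ is roughly $M(1-\nu)$, which is smaller than $K\approx M(1-\nu/2)$. Simply dropping that restriction and padding with arbitrary subpieces of $I$ repairs this.
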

%%%%%%%%%%%%%%%%%%%%%%%%%%%%%%%%%%%%%%%%%%%%%%%%%%%%%%%%%%%%%%%%%%%%%%%%%%%%%%%%
\begin{proof}
1. Fix $T\in\mathbb N$ to be chosen later depending on $\delta,C_R$ and put
$\nu:=(3T)^{-1}$, $C:=3T$. Let $I$ be an interval
with $C\alpha_{\min}\leq |I|\leq \alpha_{\max}$. We partition $I$
into $T$ intervals $I_1,\dots,I_T$, each of length
$|I|/T$. We argue by contradiction, assuming that each
interval $J\subset I$ with $|J|=\nu|I|$ intersects~$X$.
Then this applies to the middle third of each of the intervals $I_r$,
implying that the interior of $I_r$ contains an interval $I'_r$ of length $|I|/C$
centered at a point in $X$. We now use that
$\bigsqcup_r I_r'\subset I$ and write using $\delta$-regularity of~$X$
$$
TC_R^{-1}\Big({|I|\over 3T}\Big)^\delta\leq \sum_{r=1}^T\mu(I'_r)\leq \mu(I)\leq 2C_R|I|^{\delta}.
$$
Since $\delta<1$ we may choose $T$ so that
$T^{1-\delta}>6C_R^2$, giving a contradiction.

2. We only provide a sketch, referring to~\cite[Lemma~5.4]{meassupp} for details.
Fix $L\in\mathbb N$ such that $L>2/\nu$. Assume for simplicity
that $\alpha_{\min}=0$,
$\alpha_{\max}=1$, and $X$ is contained in an interval $I$ with $|I|= 1$.
We partition $I$ into $L$ intervals $I_1,\dots,I_L$, each of length $|I|/L<\nu/2$.
By the porosity property there exists $\ell_0$ such that $I_{\ell_0}\cap X=\emptyset$.
Then $X$ is contained in the union $Y_1:=\bigcup_{\ell\neq \ell_0}I_\ell$.
We now partition each of the intervals $I_\ell$, $\ell\neq\ell_0$
into $L$ pieces, one of which will again not intersect $X$ by the porosity property
and will be removed. This gives a covering of $X$
by a union $Y_2$ of $(L-1)^2$ intervals, each of length $L^{-2}$.
Repeating the process we construct sets $Y_1\supset Y_2\supset\dots$
covering $X$ and the intersection $Y:=\bigcap_k Y_k$ is a `Cantor-like' set
which is $\delta$-regular with $\delta:=\log_L(L-1)<1$.
\end{proof}
%%%%%%%%%%%%%%%%%%%%%%%%%%%%%%%%%%%%%%%%%%%%%%%%%%%%%%%%%%%%%%%%%%%%%%%%%%%%%%%%
Many natural constructions give sets which are regular/porous on scales~0 to~1.
Neighborhoods of such sets of size $h\ll 1$ (to which FUP will be typically
applied) are then regular/porous
on scales~$Ch$ to~$1$:
%%%%%%%%%%%%%%%%%%%%%%%%%%%%%%%%%%%%%%%%%%%%%%%%%%%%%%%%%%%%%%%%%%%%%%%%%%%%%%%%
\begin{prop}
\label{l:regular-nbhd} Let $0<h<1$.

1. Assume that $X$ is $\delta$-regular on scales~0 to~1 with constant $C_R$.
Then the neighborhood
$$
X(h):=X+[-h,h]
$$
is $\delta$-regular on scales~$h$ to~$1$ with constant $C'_R$,
where $C'_R$ depends only on $C_R$.

2. Assume that $X$ is $\nu$-porous on scales 0 to~1.
Then $X(h)$ is $\nu\over 3$-porous on scales~${3\over\nu}h$ to~1.
\end{prop}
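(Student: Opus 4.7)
The plan for part~(1) is to produce a $\delta$-regular measure on $X(h)$ by smearing $\mu_X$ at scale $h$. Concretely, I would set
\begin{equation*}
\mu_{X(h)} := \mu_X * \rho_h,\qquad \rho_h := \tfrac{1}{2h}\indic_{[-h,h]},
\end{equation*}
which is automatically supported in $X + [-h,h] = X(h)$. The upper bound $\mu_{X(h)}(I) \le 2C_R|I|^\delta$ for any interval with $|I| \in [h,1]$ falls out immediately by writing $\mu_{X(h)}(I) = \int \mu_X(I-y)\,\rho_h(y)\,dy$ and applying the remark after Definition~\ref{d:delta-regular} pointwise in $y$, since each translate $I-y$ has the same admissible length $|I| \in [0,1]$.

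For the matching lower bound, I would let $I$ be centered at $x \in X(h)$ with half-length $r = |I|/2 \ge h/2$ and pick $x^* \in X$ with $|x - x^*| \le h$. For every $y$ with $|y - (x - x^*)| \le r/2$ the shifted interval $I-y$ is centered within $r/2$ of $x^*$ and therefore contains $[x^* - r/2,\,x^* + r/2]$, an interval of length $r \le 1$ centered on $X$; hence $\mu_X(I-y) \ge C_R^{-1} r^\delta$ by regularity of $\mu_X$. The admissible set of $y$ is an interval of length $r$ centered at $x - x^* \in [-h,h]$, whose intersection with $[-h,h]$ has length at least $\min(r/2,h)$. Integrating against $\rho_h$ and separately handling $r \ge 2h$ and $h/2 \le r < 2h$ then yields $\mu_{X(h)}(I) \ge c\,C_R^{-1}|I|^\delta$ for an absolute constant $c > 0$, giving a $C'_R$ depending only on $C_R$.

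For part~(2), given $I$ with $|I| \in [3h/\nu,\,1]$, I would apply $\nu$-porosity of $X$ to extract $J' \subset I$ with $|J'| = \nu|I|$ and $J' \cap X = \emptyset$. Taking $J$ to be the sub-interval of length $(\nu/3)|I|$ concentric with $J'$, each of the two gaps between $J$ and $\partial J'$ has length $(\nu/3)|I| \ge h$, so $d(J,X) \ge h$ and consequently $J \cap X(h) = \emptyset$. The borderline $|I| = 3h/\nu$ is handled by shrinking $J$ by an arbitrarily small amount, which does not affect the stated porosity constant $\nu/3$.

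The main obstacle is the lower bound in part~(1) at scales $|I| \asymp h$: there the nearest point $x^*$ can sit outside $I$ entirely, so $\mu_X$ by itself need not even see $I$. The convolution with $\rho_h$ is designed precisely to redistribute $\mu_X$ across $X(h)$ so that some shift $I-y$ does capture a subinterval centered on $X$; everything else is bookkeeping of constants.
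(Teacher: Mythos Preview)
Your proposal is correct. For part~2 your argument is identical to the paper's (with the roles of $J$ and $J'$ swapped): take the $\nu$-porosity interval of length $\nu|I|\geq 3h$ disjoint from $X$ and pass to its middle third. One small remark: your ``borderline'' comment is unnecessary and slightly off. Since $J'$ is closed and $J'\cap X=\emptyset$, any $x\in X$ satisfies a \emph{strict} inequality $x<a$ or $x>b$ (where $J'=[a,b]$), and hence $x\pm h$ stays strictly outside the concentric middle third even when $(\nu/3)|I|=h$. No shrinking is needed, which is good, because shrinking $J$ would violate the exact length requirement $|J|=(\nu/3)|I|$ in Definition~\ref{d:porous}.

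For part~1 the paper simply cites \cite[Lemma~2.3]{fullgap} without giving an argument, so your convolution construction $\mu_{X(h)}:=\mu_X*\rho_h$ is a genuine fill-in of the omitted details. The upper bound is immediate from the remark after Definition~\ref{d:delta-regular}, and your lower-bound computation is correct: the key point, which you identify, is that at the smallest scales $|I|\asymp h$ the fraction of the mollifier $\rho_h$ living in the admissible shift window drops to $\asymp r/h\asymp 1$, so the constant degrades but stays bounded. This is the standard approach and almost certainly what the cited lemma does.
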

%%%%%%%%%%%%%%%%%%%%%%%%%%%%%%%%%%%%%%%%%%%%%%%%%%%%%%%%%%%%%%%%%%%%%%%%%%%%%%%%
\begin{proof}
1. See~\cite[Lemma~2.3]{fullgap}.

2. Take an interval $I$ such that ${3\over\nu}h\leq |I|\leq 1$.
By the porosity of~$X$ there exists an interval $J\subset I$
with $|J|=\nu|I|\geq 3h$ and $J\cap X=\emptyset$. Let $J'$
be the middle third of $J$, then $J'\subset I$,
$|J'|={\nu\over 3}|I|$, and $J'\cap X(h)=\emptyset$.
\end{proof}
%%%%%%%%%%%%%%%%%%%%%%%%%%%%%%%%%%%%%%%%%%%%%%%%%%%%%%%%%%%%%%%%%%%%%%%%%%%%%%%%

%%%%%%%%%%%%%%%%%%%%%%%%%%%%%%%%%%%%%%%%%%%%%%%%%%%%%%%%%%%%%%%%%%%%%%%%%%%%%%%%
\subsection{Statement of FUP}
  \label{s:fup-statement}

The fractal uncertainty principle gives a partial answer to the following question:
%%%%%%%%%%%%%%%%%%%%%%%%%%%%%%%%%%%%%%%%%%%%%%%%%%%%%%%%%%%%%%%%%%%%%%%%%%%%%%%%
\begin{center}
\smallskip
Fix $\delta\in [0,1]$ and $C_R\geq 1$. What is the largest value of $\beta$
such that~\eqref{e:fup} holds\\
for all $h$-dependent
families of sets $X,Y\subset [0,1]$\\
which are $\delta$-regular with constant $C_R$ on scales $h$ to 1?
\smallskip
\end{center}
%%%%%%%%%%%%%%%%%%%%%%%%%%%%%%%%%%%%%%%%%%%%%%%%%%%%%%%%%%%%%%%%%%%%%%%%%%%%%%%%
One way to establish~\eqref{e:fup} is to use the following volume (Lebesgue measure) bound:
if $X\subset [0,1]$ is $\delta$-regular on scales $h$ to 1 with constant $C_R$, then
for some $C=C(\delta,C_R)$
\begin{equation}
  \label{e:vol-x}
\vol(X)\leq Ch^{1-\delta}.
\end{equation}
See~\cite[Lemma~2.9]{fullgap} for the proof.

Using~\eqref{e:vol-x} and arguing as in~\eqref{e:basic-fup-proof},
we see that~\eqref{e:fup} holds for $\beta={1\over 2}-\delta$:
$$
\|\indic_X\mathcal F_h\indic_Y\|_{L^2\to L^2}
\leq h^{-1/2}\sqrt{\vol(X)\vol(Y)}\leq Ch^{1/2-\delta}.
$$
It also holds for $\beta=0$ since $\mathcal F_h$ is unitary. Therefore,
we get the \emph{basic FUP exponent}
\begin{equation}
  \label{e:easy-fup}
\beta_0=\max\Big(0,{1\over 2}-\delta\Big).
\end{equation}
Note that for $0\leq \delta\leq 1$, we have as $h\to 0$
$$
\|\indic_{[0,h^{1-\delta}]}\mathcal F_h\indic_{[0,h^{1-\delta}]}\|_{L^2\to L^2}
\sim h^{\max(0,1/2-\delta)}
$$
as can be seen by applying the operator on the left-hand side to
a function of the form $\chi(x/h^{1-\delta})$ where $\chi\in \CIc((0,1))$.
This shows that~\eqref{e:easy-fup} cannot be improved if we
only use the volumes of $X,Y$. Instead Theorems~\ref{t:fup-0}--\ref{t:fup-porous} below take advantage of the fractal structure
of $X$ and/or $Y$ at many different points and at different scales.
Also, by taking $X=Y=[0,1]$ or $X=Y=[0,h]$ we see that~\eqref{e:easy-fup}
is sharp when $\delta=0$ or $\delta=1$.

We now present the central result of this article which is a fractal uncertainty principle
improving over~\eqref{e:easy-fup} in the entire range $0<\delta<1$.
Improving over $\beta=0$ and over $\beta={1\over 2}-\delta$ is done using
different methods, so we split the result into two statements:
%%%%%%%%%%%%%%%%%%%%%%%%%%%%%%%%%%%%%%%%%%%%%%%%%%%%%%%%%%%%%%%%%%%%%%%%%%%%%%%%
\begin{theo}\cite[Theorem~4]{fullgap}
  \label{t:fup-0}
Fix $\delta<1$ and $C_R\geq 1$. Then there exists
\begin{equation}
  \label{e:fup-0-beta}
\beta=\beta(\delta,C_R)>0
\end{equation}
such that~\eqref{e:fup} holds for all $h$-dependent families
of sets $X,Y\subset [0,1]$ which are $\delta$-regular with constant
$C_R$ on scales $h$ to $1$.
\end{theo}
%%%%%%%%%%%%%%%%%%%%%%%%%%%%%%%%%%%%%%%%%%%%%%%%%%%%%%%%%%%%%%%%%%%%%%%%%%%%%%%%
%
\begin{theo}\cite[Theorem~1]{regfup}
  \label{t:fup-pres}
Fix $\delta>0$ and $C_R\geq 1$. Then there exists
\begin{equation}
  \label{e:fup-pres-beta}
\beta=\beta(\delta,C_R)>{1\over 2}-\delta
\end{equation}
such that~\eqref{e:fup} holds for all $h$-dependent families
of sets $X,Y\subset [0,1]$ which are $\delta$-regular with constant
$C_R$ on scales $h$ to $1$.
\end{theo}
%%%%%%%%%%%%%%%%%%%%%%%%%%%%%%%%%%%%%%%%%%%%%%%%%%%%%%%%%%%%%%%%%%%%%%%%%%%%%%%%
%
%%%%%%%%%%%%%%%%%%%%%%%%%%%%%%%%%%%%%%%%%%%%%%%%%%%%%%%%%%%%%%%%%%%%%%%%%%%%%%%%
\begin{rema}
There exist estimates on the size of the improvement
$\beta-\max(0,{1\over 2}-\delta)$ in terms of $\delta,C_R$,
see~\cite{regfup} for~\eqref{e:fup-pres-beta}
and Jin--Zhang~\cite{JinZhang} for~\eqref{e:fup-0-beta}.
\end{rema}
%%%%%%%%%%%%%%%%%%%%%%%%%%%%%%%%%%%%%%%%%%%%%%%%%%%%%%%%%%%%%%%%%%%%%%%%%%%%%%%%
%
%%%%%%%%%%%%%%%%%%%%%%%%%%%%%%%%%%%%%%%%%%%%%%%%%%%%%%%%%%%%%%%%%%%%%%%%%%%%%%%%
\begin{rema}
More results on FUP in special settings (Cantor sets, Schottky limit sets)
are described below in~\S\ref{s:cantor} and~\S\ref{s:fdec} and higher dimensions are discussed in~\S\ref{s:high-dim}.
\end{rema}
%%%%%%%%%%%%%%%%%%%%%%%%%%%%%%%%%%%%%%%%%%%%%%%%%%%%%%%%%%%%%%%%%%%%%%%%%%%%%%%%
An application of Theorem~\ref{t:fup-0} and Proposition~\ref{l:regular-porous}
is the following version of FUP for porous sets:
%%%%%%%%%%%%%%%%%%%%%%%%%%%%%%%%%%%%%%%%%%%%%%%%%%%%%%%%%%%%%%%%%%%%%%%%%%%%%%%%
\begin{theo}
  \label{t:fup-porous}
Fix $\nu>0$. Then there exists $\beta=\beta(\nu)>0$ such that~\eqref{e:fup}
holds for all $h$-dependent families of sets $X,Y\subset [0,1]$ which are
$\nu$-porous on scales~$h$ to~$1$.
\end{theo}
%%%%%%%%%%%%%%%%%%%%%%%%%%%%%%%%%%%%%%%%%%%%%%%%%%%%%%%%%%%%%%%%%%%%%%%%%%%%%%%%
We now briefly discuss the proofs of Theorem~\ref{t:fup-0}--\ref{t:fup-pres}.
See also~\cite[\S4]{fwlEDP} for a more detailed expository treatment of Theorem~\ref{t:fup-0}.

To prove Theorem~\ref{t:fup-0} we rewrite the estimate~\eqref{e:fup} as follows:
\begin{equation}
  \label{e:fup-rewritten}
f\in L^2(\mathbb R),\
\supp\hat f\subset h^{-1}\cdot Y
\quad\Longrightarrow\quad
\|\indic_{X}f\|_{L^2}\leq Ch^\beta \|f\|_{L^2}.
\end{equation}
The proof of~\eqref{e:fup-rewritten} proceeds by iteration on scales
$1,{1\over 2},\dots, 2^{-j},\dots, h$. At each scale~$\alpha$
we use the porosity of $X$ to find many intervals of size $\sim\alpha$
which do not intersect~$X$; denote their union by~$U_\alpha$.
The upper bound~\eqref{e:fup-rewritten}
then follows from a \emph{lower} bound on the mass of $f$ on $U_\alpha$. Such lower bounds
are known if $f$ belongs to a quasi-analytic class, i.e. if the Fourier transform $\hat f$
decays fast enough. (For instance if $\hat f$ decays exponentially fast then $f$ is real-analytic
and cannot identically vanish on any interval.)

To convert the Fourier support condition~\eqref{e:fup-rewritten}
to a Fourier decay statement we convolve $f$ with a function $\psi$ which is compactly
supported (so that we do not lose the ability to bound the norm of $f$ on an interval) and
has Fourier transform decaying almost exponentially fast on the set $h^{-1}\cdot Y$.
The function $\psi$ is constructed using the Beurling--Malliavin theorem~\cite{Beurling-Malliavin}
with a weight tailored to the set $Y$, whose existence uses the fact that $Y$ is $\delta$-regular with $\delta<1$.
(One does not actually need the full strength of the Beurling--Malliavin theorem as explained in~\cite{JinZhang}).
In particular, we use the fact that
$X,Y$ are `not too big' in two different ways: the porosity property
and a quantitative sparsity following from~\eqref{e:dr-def} when $\delta<1$.
In the much simpler setting of arithmetic Cantor sets
these two properties appear in the proof of Lemma~\ref{l:improve-0}.

The proof of Theorem~\ref{t:fup-pres} is inspired by the works of Dolgopyat~\cite{Dolgopyat} and Naud~\cite{NaudGap}. Note that if we replace $e^{-ix\xi/h}$ in the definition of the Fourier transform $\mathcal F_h$ by~1,
then the norm~\eqref{e:fup} is asymptotic to~$h^{1/2-\delta}$
(assuming $\vol(X)\sim\vol(Y)\sim h^{1-\delta}$).
Thus to get an improvement we need to use cancellations coming from the phase
in the Fourier transform. Using that $\delta>0$
(that is, $X,Y$ are `not too small') we can find many quadruples of points
$x_1,x_2\in X$, $y_1,y_2\in Y$ such that the phase factor
$e^{i(x_1-x_2)(y_1-y_2)/h}$ is far from~1. These quadruples cause cancellations
which lead to~\eqref{e:fup} with $\beta>{1\over 2}-\delta$.
In general one has to be careful at exploiting the cancellations to make
sure they compound on many different scales; the argument is again much simpler
in the setting of arithmetic Cantor sets, see Lemma~\ref{l:improve-1}.

%%%%%%%%%%%%%%%%%%%%%%%%%%%%%%%%%%%%%%%%%%%%%%%%%%%%%%%%%%%%%%%%%%%%%%%%%%%%%%%%
\subsection{Schottky limit sets}
  \label{s:schottky}

Many natural fractal sets are constructed using iterated function systems.
Here we briefly present a special class of these, \emph{Schottky limit sets},
which naturally arise in the spectral gap problem on hyperbolic surfaces
(see~\S\ref{s:appl-open} below). We refer to~\cite[\S2]{hyperfup} and~\cite[\S15.1]{BorthwickBook}
for more details.

Schottky limit sets are generated by fractional linear (M\"obius) transformations
\begin{equation}
  \label{e:mobius}
x\mapsto \gamma.x={ax+b\over cx+d},\quad
\gamma=\begin{pmatrix} a & b \\ c & d\end{pmatrix}\in\SL_2(\mathbb R),\quad
x\in\dot{\mathbb R}:=\mathbb R\cup\{\infty\}.
\end{equation}
More precisely, we:
%%%%%%%%%%%%%%%%%%%%%%%%%%%%%%%%%%%%%%%%%%%%%%%%%%%%%%%%%%%%%%%%%%%%%%%%%%%%%%%%
\newsavebox{\matone}
\newsavebox{\mattwo}
\savebox{\matone}{$\gamma_1=\left(\begin{smallmatrix}7 &-3\\ -2 &1\end{smallmatrix}\right)$}
\savebox{\mattwo}{$\gamma_2=\left(\begin{smallmatrix}3 &-7\\ -2 & 5\end{smallmatrix}\right)$}
%%%%%%%%%%%%%%%%%%%%%%%%%%%%%%%%%%%%%%%%%%%%%%%%%%%%%%%%%%%%%%%%%%%%%%%%%%%%%%%%
\begin{figure}
\includegraphics[width=14cm]{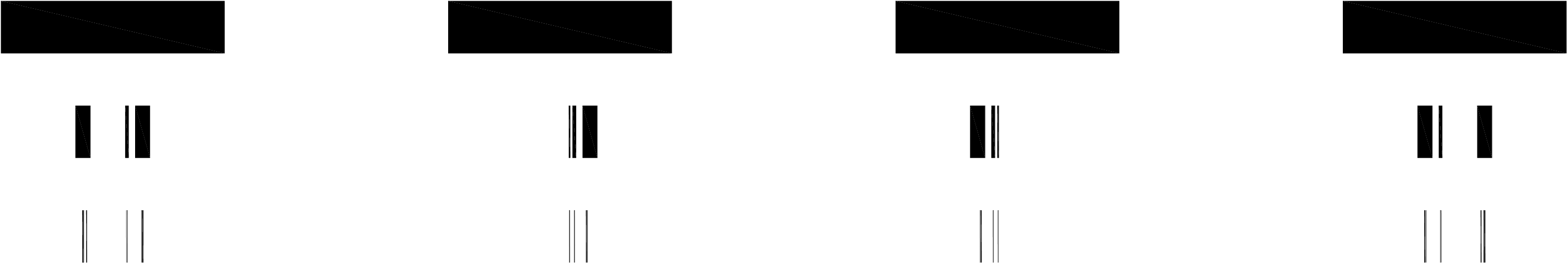}
\caption{An example of a Schottky limit set, picturing the sets in~\eqref{e:schottky-X}
for $n=1,2,3$. The Schottky data are as follows: $I_1=[-4,-3]$,
$I_2=[-2,-1]$, $I_3=[0,1]$, $I_4=[2,3]$,
\usebox{\matone}, \usebox{\mattwo}. The bottom row is a union of 36 intervals,
illustrating the difficulty of plotting a `nonlinear' fractal set.}
\label{f:sch1}
\end{figure}
%%%%%%%%%%%%%%%%%%%%%%%%%%%%%%%%%%%%%%%%%%%%%%%%%%%%%%%%%%%%%%%%%%%%%%%%%%%%%%%%
%
%%%%%%%%%%%%%%%%%%%%%%%%%%%%%%%%%%%%%%%%%%%%%%%%%%%%%%%%%%%%%%%%%%%%%%%%%%%%%%%%
\begin{itemize}
\item fix a collection of nonintersecting intervals $I_1,\dots,I_{2r}\subset\mathbb R$,
where $r\geq 1$;
\item denote $\mathcal A:=\{1,\dots,2r\}$ and for each
$w\in \mathcal A$, define $\overline w:=w+r$ if $w\leq r$ and $\overline w:=w-r$ otherwise;
\item fix transformations $\gamma_1,\dots,\gamma_{2r}\in\SL_2(\mathbb R)$
such that for all $w\in\mathcal A$
we have $\gamma_{\overline w}=\gamma_w^{-1}$ and
the image of $\dot{\mathbb R}\setminus I_{\overline w}$ under $\gamma_w$ is the interior
of $I_w$;
\item for $n\in\mathbb N$, define the set $\mathcal W_n$
consisting of words $\mathbf w=w_1\dots w_n$ such that
$w_{j+1}\neq \overline{w_j}$ for all $j=1,\dots,n-1$;
\item for each word $\mathbf w=w_1\dots w_n\in\mathcal W_n$ define the interval
$I_{\mathbf w}=\gamma_{w_1}\dots\gamma_{w_{n-1}}(I_{w_n})$.
Since $I_{w_n}\subset\dot{\mathbb R}\setminus I_{\overline{w_{n-1}}}$
we have $I_{\mathbf w}\subset I_{w_1\dots w_{n-1}}$,
so the collection of intervals $I_{\mathbf w}$ forms a tree
(see Figure~\ref{f:sch1});
\item the limit set $X$ is now defined as the intersection
of a decreasing family of sets
\begin{equation}
  \label{e:schottky-X}
X:=\bigcap_n\bigsqcup_{\mathbf w\in\mathcal W_n} I_{\mathbf w}.
\end{equation}
\end{itemize}
%%%%%%%%%%%%%%%%%%%%%%%%%%%%%%%%%%%%%%%%%%%%%%%%%%%%%%%%%%%%%%%%%%%%%%%%%%%%%%%%
The transformations $\gamma_1,\dots,\gamma_r$ generate a discrete free subgroup
$\Gamma\subset \SL_2(\mathbb R)$, called a \emph{Schottky group}, and
$\Gamma$ acts on the limit set $X$.
For $r=1$ the set $X$ consists of just two points, so we henceforth assume
$r\geq 2$. In this case $X$ is $\delta$-regular for some $\delta\in (0,1)$,
see~\cite[Lemma~2.12]{hyperfup}. The corresponding measure
in Definition~\ref{d:delta-regular} is the Patterson--Sullivan
measure on $X$, see~\cite[\S14.1]{BorthwickBook}.

Schottky limit sets give a fundamental example of `nonlinear' fractal sets
in the sense that the transformations generating them are nonlinear (as opposed
to linear Cantor sets such as those studied in~\S\ref{s:cantor} below).
This often complicates their analysis, however this nonlinearity is sometimes
also useful~-- in particular it implies Fourier decay for Schottky limit sets
while linear Cantor sets do not have this property, see Theorem~\ref{t:fdec}.

%%%%%%%%%%%%%%%%%%%%%%%%%%%%%%%%%%%%%%%%%%%%%%%%%%%%%%%%%%%%%%%%%%%%%%%%%%%%%%%%
\subsection{FUP with a general phase}
  \label{s:hfup}

In applications we often need a more general version of FUP, with the Fourier
transform~\eqref{e:F-h} replaced by an oscillatory integral operator
\begin{equation}
  \label{e:B-h}
\mathcal B_h f(x)=(2\pi h)^{-1/2}\int_{\mathbb R}e^{i\Phi(x,y)/h}
b(x,y)f(y)\,dy.
\end{equation}
Here the phase function $\Phi\in C^\infty(U;\mathbb R)$
satisfies the nondegeneracy condition
\begin{equation}
  \label{e:Phi-nondeg}
\partial^2_{xy}\Phi(x,y)\neq 0\quad\text{on}\quad U,
\end{equation}
$U\subset\mathbb R^2$ is an open set, and the amplitude $b$ lies in~$\CIc(U)$. The nondegeneracy
condition ensures that the norm $\|\mathcal B_h\|_{L^2\to L^2}$ is bounded uniformly as
$h\to 0$. The phase function used in applications
to hyperbolic surfaces in~\S\ref{s:appl} is
\begin{equation}
  \label{e:Phi-hyp}
\Phi(x,y)=\log|x-y|,\quad
U=\{(x,y)\in\mathbb R^2\mid x\neq y\},
\end{equation}
and FUP with this phase function is called the \emph{hyperbolic FUP}.

The following results generalize Theorems~\ref{t:fup-0}--\ref{t:fup-porous}.
In all of these we assume that $\Phi$ satisfies~\eqref{e:Phi-nondeg};
the constant $\beta$ depends only on $\delta,C_R$ (or $\nu$ in the case
of Theorem~\ref{t:hfup-porous}) and the constant $C$ additionally
depends on $\Phi,b$. However,
the values of $\beta$ obtained in Theorems~\ref{t:hfup-0},\ref{t:hfup-porous} below
are smaller than the ones in Theorems~\ref{t:fup-0},\ref{t:fup-porous}.
Since $b$ is compactly supported we may remove
the condition $X,Y\subset [0,1]$.
%%%%%%%%%%%%%%%%%%%%%%%%%%%%%%%%%%%%%%%%%%%%%%%%%%%%%%%%%%%%%%%%%%%%%%%%%%%%%%%%
\begin{theo}\cite[Proposition~4.3]{fullgap}
  \label{t:hfup-0}
Fix $\delta<1$ and $C_R\geq 1$. Then there exists
\begin{equation}
  \label{e:hfup-0-beta}
\beta=\beta(\delta,C_R)>0
\end{equation}
such that for all $X,Y\subset \mathbb R$ which are $\delta$-regular with constant
$C_R$ on scales $h$ to $1$ we have
\begin{equation}
  \label{e:hfup}
\|\indic_X \mathcal B_h\indic_Y\|_{L^2(\mathbb R)\to L^2(\mathbb R)}\leq Ch^\beta.
\end{equation}
\end{theo}
%%%%%%%%%%%%%%%%%%%%%%%%%%%%%%%%%%%%%%%%%%%%%%%%%%%%%%%%%%%%%%%%%%%%%%%%%%%%%%%%
%
%%%%%%%%%%%%%%%%%%%%%%%%%%%%%%%%%%%%%%%%%%%%%%%%%%%%%%%%%%%%%%%%%%%%%%%%%%%%%%%%
\begin{theo}\cite{regfup}
  \label{t:hfup-pres}
Fix $\delta>0$ and $C_R\geq 1$. Then there exists
\begin{equation}
  \label{e:hfup-pres-beta}
\beta=\beta(\delta,C_R)>{1\over 2}-\delta
\end{equation}
such that~\eqref{e:hfup} holds for all $X,Y\subset \mathbb R$ which are $\delta$-regular with constant $C_R$ on scales $h$ to $1$.
\end{theo}
%%%%%%%%%%%%%%%%%%%%%%%%%%%%%%%%%%%%%%%%%%%%%%%%%%%%%%%%%%%%%%%%%%%%%%%%%%%%%%%%
%
%%%%%%%%%%%%%%%%%%%%%%%%%%%%%%%%%%%%%%%%%%%%%%%%%%%%%%%%%%%%%%%%%%%%%%%%%%%%%%%%
\begin{theo}
\label{t:hfup-porous}
Fix $\nu>0$.
Then there exists $\beta=\beta(\nu)>0$ such that~\eqref{e:hfup} holds for
all $h$-dependent families of sets $X,Y\subset\mathbb R$ which are $\nu$-porous
on scales $h$ to $1$.
\end{theo}
%%%%%%%%%%%%%%%%%%%%%%%%%%%%%%%%%%%%%%%%%%%%%%%%%%%%%%%%%%%%%%%%%%%%%%%%%%%%%%%%
We give an informal explanation for how to reduce Theorem~\ref{t:hfup-0}
to the case of Fourier transform, Theorem~\ref{t:fup-0}.
(For Theorem~\ref{t:hfup-pres}, the argument in~\cite{regfup}
handles the case of a general phase directly.) The argument we give below gives
$\beta$ which depends on $\Phi$ in addition to $\delta,C_R$,
but it can be modified to remove this dependence. 

We first consider the case of the Fourier transform phase $\Phi(x,y)=-xy$
and arbitrary amplitude $b\in\CIc(\mathbb R^2)$. 
Fix $\rho\in (0,1)$ which is very close to~1. For each
$f\in L^2(\mathbb R)$, the function $\mathcal B_h \indic_Y f$
is localized to semiclassical frequencies in the neighborhood
$Y(h^\rho)$ in the following sense: for all $N$
\begin{equation}
  \label{e:localized}
\|\indic_{\mathbb R\setminus Y(h^\rho)}\mathcal F_h^{-1}\mathcal B_h \indic_Y f\|_{L^2(\mathbb R)}\leq C_Nh^N\|f\|_{L^2(\mathbb R)}.
\end{equation}
Indeed, the operator $\mathcal F_h^{-1}\mathcal B_h$ has integral kernel
$$
K(x,y)=(2\pi h)^{-1}\int_{\mathbb R} e^{i(x-y)w/h}b(w,y)\,dw.
$$
Repeated integration by parts shows
that $|K(x,y)|\leq C_N h^{-1}(1+|x-y|/h)^{-N}$
for all~$N$, which implies~\eqref{e:localized}.

Armed with~\eqref{e:localized}, we may replace
$u:=\mathcal B_h \indic_Y f$ by~$\mathcal F_h\indic_{Y(h^\rho)}\mathcal F_h^{-1}u$ in~\eqref{e:hfup},
which then reduces to
\begin{equation}
  \label{e:hfup-0}
\|\indic_X\mathcal F_h\indic_{Y(h^\rho)}\|_{L^2(\mathbb R)\to L^2(\mathbb R)}\leq Ch^{\beta}.
\end{equation}
Now~\eqref{e:hfup-0} with $\rho:=1$ follows from FUP for Fourier transform,
Theorem~\ref{t:fup-0}, since $Y(h)$ is still $\delta$-regular on scales $Ch$ to~1 similarly to
Proposition~\ref{l:regular-nbhd}. For $\rho<1$ we may write $Y(h^\rho)$
as a union of $\sim h^{\rho-1}$ shifted copies of the set~$Y(h)$, which
bounds the left-hand side of~\eqref{e:hfup-0} by $Ch^{\beta+\rho-1}$.
It remains to take $\rho$ close enough to~1 so that $\beta+\rho-1>0$.

\medskip

We now explain how to handle the case of a general phase $\Phi$ satisfying~\eqref{e:Phi-nondeg},
using almost orthogonality and a linearization argument. We first take $\rho<1$
close to~1 and replace the set $X$ by a smoothened version of
its neighborhood $X(h^{\rho/2})$ in~\eqref{e:hfup}. More precisely, take a function
$$
\chi\in C^\infty(\mathbb R;[0,1]),\quad
\supp\chi\subset X(h^{\rho/2}),\quad
\supp(1-\chi)\cap X=\emptyset
$$
which satisfies the derivative bounds for all~$N$
\begin{equation}
  \label{e:hfup-derbs-chi}
\sup|\partial^N \chi|\leq C_Nh^{-\rho N/2}.
\end{equation}
Then it suffices to show the bound
\begin{equation}
  \label{e:hfup-1}
\|\chi\mathcal B_h\indic_Y\|_{L^2(\mathbb R)\to L^2(\mathbb R)}\leq Ch^\beta.
\end{equation}
The bound~\eqref{e:hfup-1} is stronger than~\eqref{e:hfup} and thus appears harder to prove.
However, if the sets $Y_1,Y_2\subset\mathbb R$ are at distance $\geq h^{1/2}$, then
we have the almost orthogonality estimate for all~$N$
\begin{equation}
  \label{e:hfup-ao}
\|(\chi\mathcal B_h\indic_{Y_1})^*\chi\mathcal B_h\indic_{Y_2}\|_{L^2(\mathbb R)\to L^2(\mathbb R)}
\leq C_Nh^N.
\end{equation}
To show~\eqref{e:hfup-ao} we write the integral kernel of the operator
$\mathcal B_h^* \chi^2\mathcal B_h$,
$$
K_1(x,y)=(2\pi h)^{-1}\int_{\mathbb R} e^{i(\Phi(w,y)-\Phi(w,x))/h}
\overline{b(w,x)}b(w,y)\chi(w)^2\,dw.
$$
We repeatedly integrate by parts in~$w$.
Each integration by parts gives a gain of $h/|x-y|$ from the phase,
using the inequality
$$
|\partial_w(\Phi(w,x)-\Phi(w,y))|\geq C^{-1}|x-y|
$$
which is a consequence of~\eqref{e:Phi-nondeg}. On the other hand
differentiating the $\chi^2(w)$ factor we get a $h^{\rho/2}$ loss by~\eqref{e:hfup-derbs-chi}.
For $|x-y|\geq h^{1/2}$ we get an $h^{(1-\rho)/2}$ improvement with each integration
by parts, giving~\eqref{e:hfup-ao}.

Now, we split $Y$ into a disjoint union of clusters $Y_j$, each contained in an interval
of size $h^{1/2}$. Using~\eqref{e:hfup-ao} and the Cotlar--Stein Theorem~\cite[Theorem~C.5]{e-z}
we see that it suffices to show the norm bound for each individual cluster
\begin{equation}
\label{e:hfup-2}
\|\chi\mathcal B_h\indic_{Y_j}\|_{L^2(\mathbb R)\to L^2(\mathbb R)}\leq Ch^\beta.
\end{equation}
For simplicity we assume that $Y_j=Y\cap [0,h^{1/2}]$.
Composing $\mathcal B_h$ with the isometry $Tf(y)=h^{-1/4}f(y/h^{1/2})$ we get
the operator
$$
\widetilde{\mathcal B}_hf(x)=(2\pi\sqrt h)^{-1/2}\int_{\mathbb R}
e^{i\Phi(x,\sqrt h y)/h}b(x,\sqrt hy)f(y)\,dy
$$
and it suffices to show the norm bound
\begin{equation}
  \label{e:hfup-3}
\|\chi\widetilde{\mathcal B}_h\indic_{h^{-1/2}Y\cap [0,1]}\|_{L^2(\mathbb R)\to L^2(\mathbb R)}
\leq Ch^\beta.
\end{equation}
For $y\in [0,1]$ we write the Taylor expansion of the phase in $\widetilde{\mathcal B}_h$,
$$
{\Phi(x,\sqrt h y)\over h}={\Phi(x,0)\over h}+{\varphi(x)y\over\sqrt h}+\mathcal O(1),\quad
\varphi(x):=\partial_y\Phi(x,0).
$$
The first term on the right-hand side can be pulled out of the operator $\mathcal B_h$
without changing the norm~\eqref{e:hfup-3} and the $\mathcal O(1)$ remainder can be put into
the amplitude $b$. Thus~\eqref{e:hfup-3} follows from the bound
\begin{equation}
  \label{e:hfup-4}
\|\indic_{X(h^{\rho/2})}\mathcal B'_h\indic_{h^{-1/2}Y\cap [0,1]}\|_{L^2(\mathbb R)\to L^2(\mathbb R)}\leq Ch^\beta
\end{equation}
where we have for some amplitude $b'(x,y)$ with bounded derivatives
$$
\mathcal B'_h f(x)=(2\pi\sqrt h)^{-1/2}\int_{\mathbb R}e^{i\varphi(x)y/\sqrt h}b'(x,y)f(y)\,dy.
$$
Making the change of variables $x\mapsto -\varphi(x)$ (which is a diffeomorphism thanks to the nondegeneracy condition~\eqref{e:Phi-nondeg}) we reduce to an uncertainty estimate of the
form~\eqref{e:hfup} with the phase $-xy$, $h$ replaced by $\sqrt h$,
and the sets $X,Y$ replaced by $-\varphi(X(h^{\rho/2})),h^{-1/2}Y\cap [0,1]$.
Using that $\varphi(X(h^{1/2}))$ and $h^{-1/2}Y$ are $\delta$-regular on scales
$\sqrt h$ to~1 and taking $\rho$ close to~1, we finally get the bound~\eqref{e:hfup-4}
from the case of the phase $-xy$ handled above.

%%%%%%%%%%%%%%%%%%%%%%%%%%%%%%%%%%%%%%%%%%%%%%%%%%%%%%%%%%%%%%%%%%%%%%%%%%%%%%%%
%%%%%%%%%%%%%%%%%%%%%%%%%%%%%%%%%%%%%%%%%%%%%%%%%%%%%%%%%%%%%%%%%%%%%%%%%%%%%%%%
\section{Applications of FUP}
  \label{s:appl}

We now discuss applications of the fractal uncertainty principle
to quantum chaos, more precisely to lower bounds on mass of eigenfunctions (\S\ref{s:appl-closed})
and essential spectral gaps (\S\ref{s:appl-open}). The present review focuses
on the fractal uncertainty principle itself rather than on its applications,
thus we keep the discussion brief. In particular, we largely avoid discussing
\emph{microlocal analysis}, a mathematical theory behind classical/quantum and particle/wave
correspondences in physics which is essential in
obtaining applications of FUP. A more detailed presentation of the application
to eigenfunctions in~\S\ref{s:appl-closed} in the special
case of hyperbolic surfaces is available in~\cite{fwlEDP}. 

%%%%%%%%%%%%%%%%%%%%%%%%%%%%%%%%%%%%%%%%%%%%%%%%%%%%%%%%%%%%%%%%%%%%%%%%%%%%%%%%
\subsection{Control of eigenfunctions}
  \label{s:appl-closed}

Throughout this section we assume that $(M,g)$ is a compact connected Riemannian surface
of negative Gauss curvature.%
\footnote{The results in this section apply in fact to more general \emph{Anosov surfaces},
where the geodesic flow has a stable/unstable decomposition.}
An important special class is given by \emph{hyperbolic surfaces},
which have Gauss curvature~$-1$.
A standard object of study in quantum chaos is the collection of eigenfunctions of the
Laplace--Beltrami operator $-\Delta_g$ on $M$,
$$
u_k\in C^\infty(M),\quad
(-\Delta_g-\lambda_k^2)u_k=0,\quad
\|u_k\|_{L^2(M)}=1,
$$
where $u_k$ forms an orthonormal basis of $L^2(M)$. See Figure~\ref{f:stro}.
Our first application is a lower bound on mass of these eigenfunctions:
%%%%%%%%%%%%%%%%%%%%%%%%%%%%%%%%%%%%%%%%%%%%%%%%%%%%%%%%%%%%%%%%%%%%%%%%%%%%%%%%
\begin{figure}
\includegraphics[scale=1.1]{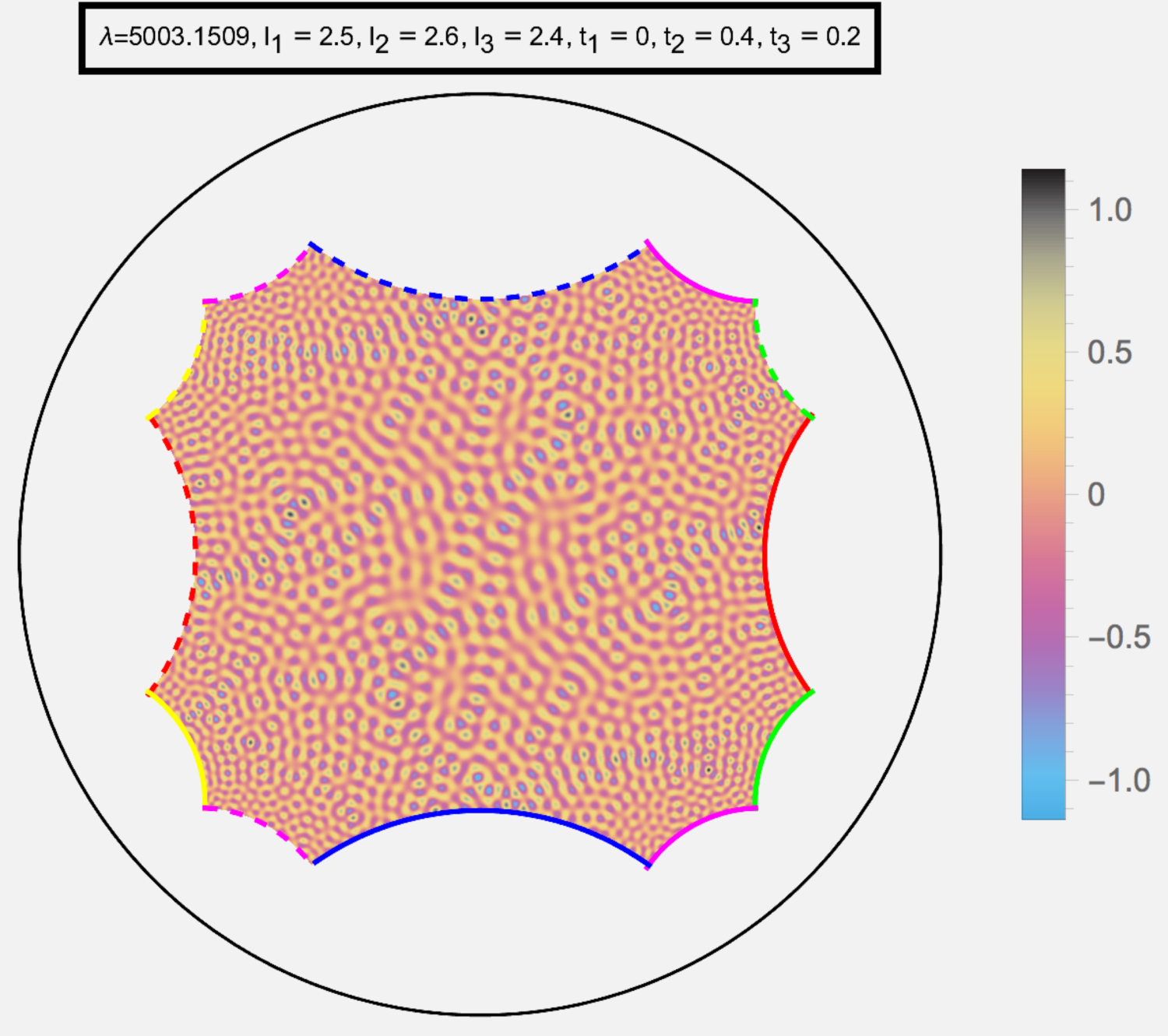}
\qquad
\includegraphics[scale=1.1]{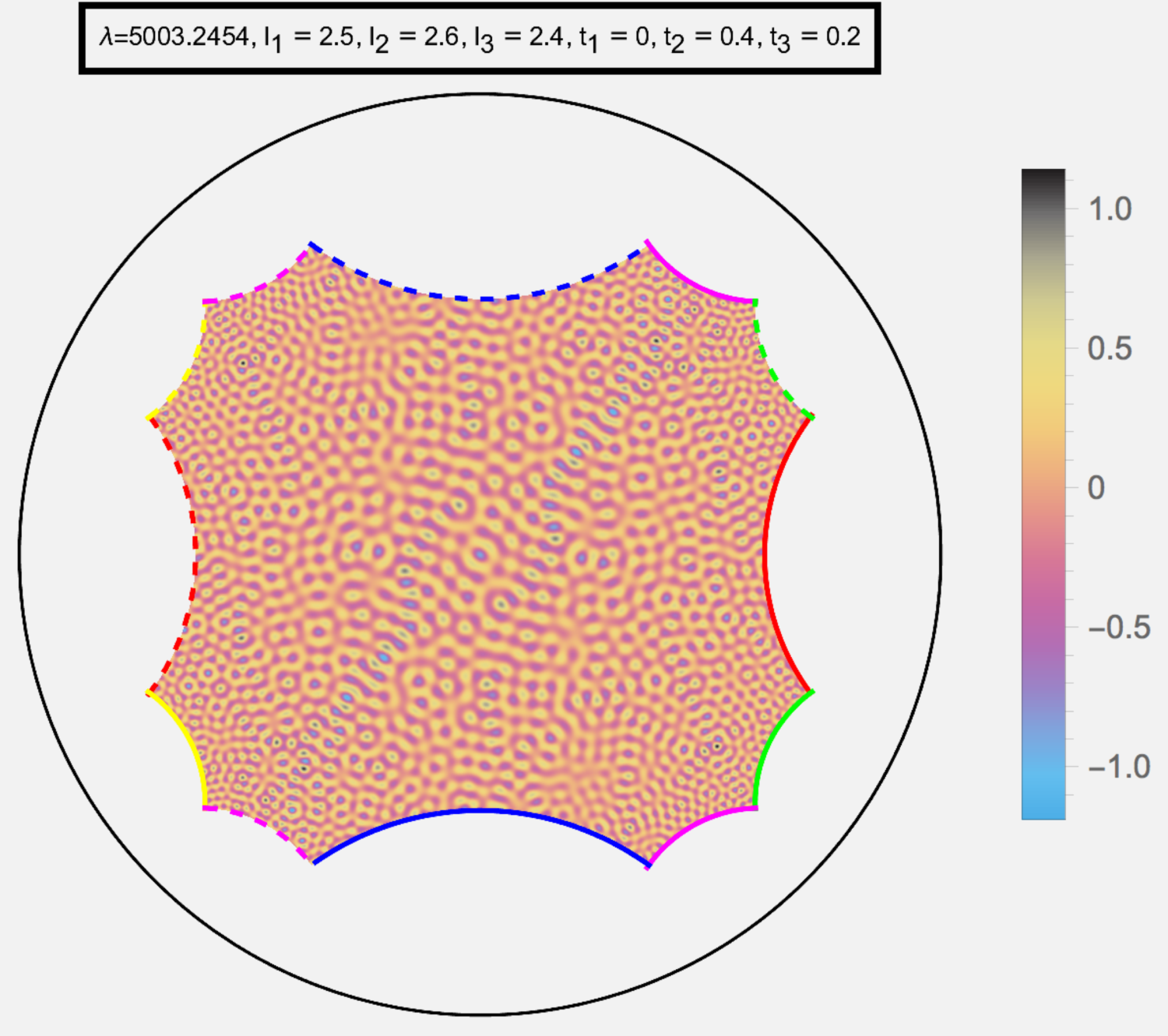}
\caption{Heat plots of two high frequency eigenfunctions of
the Laplacian on a genus~2 hyperbolic surface (on a fundamental domain
inside the Poincar\'e disk model of the hyperbolic plane), showing equidistribution
consistent with the Quantum Unique Ergodicity conjecture.
Pictures courtesy of Alexander Strohmaier,
produced using the method of Strohmaier--Uski~\cite{StrohmaierUski}.}
\label{f:stro}
\end{figure}
%%%%%%%%%%%%%%%%%%%%%%%%%%%%%%%%%%%%%%%%%%%%%%%%%%%%%%%%%%%%%%%%%%%%%%%%%%%%%%%%
\begin{theo}\cite{meassupp,varfup}
\label{t:appl-eig}
Fix a nonempty open set $\Omega\subset M$.
Then there exists $c_\Omega>0$ such that for all $k$
we have the lower bound
\begin{equation}
  \label{e:appl-eig}
\|\indic_\Omega u_k\|_{L^2(M)}\geq c_\Omega.
\end{equation}
\end{theo}
%%%%%%%%%%%%%%%%%%%%%%%%%%%%%%%%%%%%%%%%%%%%%%%%%%%%%%%%%%%%%%%%%%%%%%%%%%%%%%%%
We note that the paper~\cite{meassupp} handled the special case of hyperbolic
surfaces and the later paper~\cite{varfup}, the general case of surfaces
with Anosov geodesic flows.

We remark that the estimate~\eqref{e:appl-eig} with a constant which is allowed to
depend on~$k$ is true on any compact Riemannian manifold by the unique continuation
principle. However, in general this constant can go to~0 rapidly as $k\to\infty$.
For instance, if $M$ is the round sphere then one can construct a sequence
of eigenfunctions which are Gaussian beams centered on the equator, and
$\|\indic_\Omega u_k\|_{L^2(M)}$ is exponentially small in $\lambda_k$
for any $\Omega$ whose closure does not intersect the equator.
Thus the novelty of Theorem~\ref{t:appl-eig} is that it gives a bound uniform
in the \emph{high frequency limit} $k\to\infty$.

The key property of negatively curved
surfaces used in the proof is that the geodesic flow on $M$ is hyperbolic%
\footnote{The word `hyperbolic' is used in two different meanings:
for a surface, being hyperbolic means having curvature $-1$,
and for a flow, it means having a stable/unstable decomposition.}%
, or Anosov, in the sense that an infinitesimal perturbation of a geodesic diverges exponentially
fast from the original geodesic in at least one time direction.
This implies that this geodesic flow has chaotic behavior, making
negatively curved surfaces a standard model of chaotic systems and
the corresponding Laplacian eigenfunctions a standard model of quantum chaotic objects.

A motivation for Theorem~\ref{t:appl-eig} is given by the study of probability measures
$\mu$ which are \emph{weak limits} of high frequency sequences of eigenfunctions $u_{k_\ell}$ in the following sense:
$$
\int_M a(x)|u_{k_\ell}(x)|^2\,d\vol_g(x)\to \int_M a(x)\,d\mu(x)\quad\text{for all}\quad
a\in C^\infty(M).
$$
It is also natural to study the corresponding microlocal lifts, or \emph{semiclassical measures},
which are probability measures on the cosphere bundle $S^*M$ invariant under the geodesic flow,
and the results below are valid for these microlocal lifts as well
(see~\cite[Chapter~5]{e-z} and~\cite[\S1.2]{fwlEDP}).

We briefly review some results on weak limits of eigenfunctions
on negatively curved surfaces:
\begin{itemize}
\item Quantum Ergodicity, proved by Shnirelman~\cite{Shnirelman,Shnirelman2},
Zelditch~\cite{ZelditchQE}, and Colin de Verdi\`ere~\cite{CdV},
states that there exists a density~1 sequence $\{u_{k_\ell}\}$
whose weak limit is the volume measure on $M$. That is, most eigenfunctions
equidistribute in the high frequency limit.
\item The Quantum Unique Ergodicity conjecture of Rudnick--Sarnak~\cite{RudnickSarnak}
states that the volume measure is the only possible weak limit, that is
entire sequence of eigenfunctions equidistributes. So far this has
only been proved for Hecke eigenfunctions on arithmetic hyperbolic surfaces, by Lindenstrauss~\cite{Lindenstrauss}.
\item Entropy bounds of Anantharaman~\cite{Anantharaman} and Anantharaman--Nonnenmacher~\cite{AnantharamanNonnenmacher} give restrictions on possible weak limits:
the Kolmogorov--Sinai entropy of the corresponding microlocal lifts is $\geq c$
for some explicit constant $c>0$ depending only on the surface.
(For hyperbolic surfaces we have $c={1\over 2}$.)
In particular, this excludes the most degenerate situation when $\mu$
is supported on a single closed geodesic.
\item Theorem~\ref{t:appl-eig} implies that each weak limit has full support,
that is $\mu(\Omega)>0$ for any nonempty open $\Omega\subset M$. This also
excludes the case of $\mu$ supported on a single geodesic. The class of possible weak limits excluded by Theorem~\ref{t:appl-eig}
is different from the one excluded by~\cite{Anantharaman,AnantharamanNonnenmacher}
(neither is contained in the other).
\end{itemize}
Some of the above results are true in more general settings. In particular,
quantum ergodicity holds as long as the geodesic flow is ergodic;
neither quantum ergodicity nor entropy bounds require that the dimension of~$M$ be equal to~2.
We refer to the review articles by
Marklof~\cite{MarklofReview}, Zelditch~\cite{ZelditchReview},
and Sarnak~\cite{SarnakQUE} for a more detailed overview of the history of weak limits
of eigenfunctions.

We now give two more applications due to Jin~\cite{JinControl,JinDWE}
and Dyatlov--Jin--Nonnen\-macher~\cite{varfup},
building on Theorem~\ref{t:appl-eig} and its proof. The first of these is an observability estimate
for the Schr\"odinger equation (which immediately gives control for this equation by the HUM method
of Lions~\cite{Lions}):
%%%%%%%%%%%%%%%%%%%%%%%%%%%%%%%%%%%%%%%%%%%%%%%%%%%%%%%%%%%%%%%%%%%%%%%%%%%%%%%%
\begin{theo}\cite{JinControl,varfup}
  \label{t:appl-control}
For any $T>0$ and nonempty open $\Omega\subset M$ there exists $C_{T,\Omega}>0$ such that
for all $v\in L^2(M)$
$$
\|v\|_{L^2(M)}^2\leq C_{T,\Omega}\int_0^T\int_{\Omega}|e^{it\Delta}v(x)|^2\,d\vol_g(x)dt.
$$
\end{theo}
%%%%%%%%%%%%%%%%%%%%%%%%%%%%%%%%%%%%%%%%%%%%%%%%%%%%%%%%%%%%%%%%%%%%%%%%%%%%%%%%
The final application is exponential energy decay for the damped wave equation:
%%%%%%%%%%%%%%%%%%%%%%%%%%%%%%%%%%%%%%%%%%%%%%%%%%%%%%%%%%%%%%%%%%%%%%%%%%%%%%%%
\begin{theo}\cite{JinDWE,varfup}
  \label{t:appl-dwe}
Assume that $q\in C^\infty(M)$ satisfies $q\geq 0$ everywhere and $q\not\equiv 0$
(that is, there exists $x\in M$ such that $q(x)>0$).
Then every solution to the damped wave equation
$$
(\partial_t^2+ q(x)\partial_t-\Delta_g)w(t,x)=0,\quad
w(0,x)=f_0(x),\quad
\partial_t w(0,x)=f_1(x)
$$
with $f_j\in C^\infty(M)$ satisfies for some $\alpha>0,s>0,C>0$ depending only on $M,q$
\begin{equation}
  \label{e:xpd}
\|\nabla_{(t,x)}w(t)\|_{L^2(M)}\leq Ce^{-\alpha t}\big(\|f_0\|_{H^{s+1}(M)}+\|f_1\|_{H^s(M)}\big),\quad
t\geq 0.
\end{equation}
\end{theo}
%%%%%%%%%%%%%%%%%%%%%%%%%%%%%%%%%%%%%%%%%%%%%%%%%%%%%%%%%%%%%%%%%%%%%%%%%%%%%%%%
We remark that Theorems~\ref{t:appl-eig} and~\ref{t:appl-control}
(valid for any open $\Omega\neq\emptyset$)
were previously known only for the case when $M$ is a flat torus,
by Haraux~\cite{Haraux} and Jaffard~\cite{Jaffard},
and the corresponding weak limits for a torus were classified by Jakobson~\cite{JakobsonTorus}.
Theorem~\ref{t:appl-dwe}
is the first result of this kind (i.e. valid for any smooth nonnegative $q\not\equiv 0$) for any manifold. We refer the reader
to the introductions to~\cite{meassupp,JinControl,JinDWE,varfup}
for an overview of various previous results, in particular establishing exponential decay
bounds~\eqref{e:xpd} under various dynamical conditions on~$M,q$.

We now explain how Theorem~\ref{t:appl-eig} uses the fractal uncertainty principle,
restricting to the special case of hyperbolic surfaces considered in~\cite{meassupp}.
To keep our presentation brief, we ignore several subtle points in the argument,
referring to~\cite[\S\S2--3]{fwlEDP} for a more faithful exposition.
We argue by contradiction, assuming that $\|\indic_\Omega u_k\|_{L^2}$ is small.
The proof of Theorem~\ref{t:appl-eig}
uses \emph{semiclassical quantization}, which makes it possible to localize
$u$ in both position ($x$) and frequency ($\xi$) variables,
see~\cite{e-z} and~\cite[Appendix~E]{dizzy} for an introduction
to semiclassical analysis. Geometrically,
the pair $(x,\xi)$ gives a point in the cotangent bundle $T^*M$.
We define the semiclassical parameter as $h:=\lambda_k^{-1}$,
then in the semiclassical rescaling $u:=u_k$ is localized $h$-close
to the cosphere bundle $S^*M$.

By Egorov's Theorem, which is a form of the classical/quantum correspondence, the localization locus of
the function $u$ on $S^*M$ is invariant under the geodesic flow 
$\varphi_t:S^*M\to S^*M$ up to the `Ehrenfest time' $\log(1/h)$~-- see~\cite[\S2.2]{fwlEDP}.
From here and the smallness of $\|\indic_\Omega u\|_{L^2}$ we see that
$u$ is localized on the set
$\Gamma_+(\log(1/h))$ and also on the set $\Gamma_-(\log(1/h))$ where
$$
\Gamma_\pm(T):=\{(x,\xi)\in S^*M\mid \varphi_t(x,\xi)\notin\pi^{-1}(\Omega)\text{ for all }
t,\ 0\leq \mp t\leq T\}
$$
and $\pi:S^*M\to M$ is the projection map.
To make sense of this statement we define semiclassical pseudodifferential operators
$A_\pm$ which localize to $\Gamma_\pm(\log(1/h))$,
using the calculi introduced in~\cite{hgap}.
However these two operators lie in two incompatible pseudodifferential
calculi. The product $A_+A_-$ is not part of any pseudodifferential calculus.
Instead an application of the fractal uncertainty principle gives a norm bound for some $\beta>0$
\begin{equation}
  \label{e:fup-A}
\|A_+A_-\|_{L^2(M)\to L^2(M)}\leq Ch^\beta
\end{equation}
which tells us that no function can be localized on both $\Gamma_+(\log(1/h))$ and $\Gamma_-(\log(1/h))$.
This gives a contradiction, proving Theorem~\ref{t:appl-eig}.

%%%%%%%%%%%%%%%%%%%%%%%%%%%%%%%%%%%%%%%%%%%%%%%%%%%%%%%%%%%%%%%%%%%%%%%%%%%%%%%%
\begin{figure}
\hbox to\hsize{\hskip1.2cm
$\Gamma_+(0)$ \hss
$\Gamma_+(1)$ \hss
$\Gamma_+(2)$ \hss
$\Gamma_+(3)$ \hss
$\Gamma_+(4)$
\hskip.8cm}
\includegraphics[width=2.9cm]{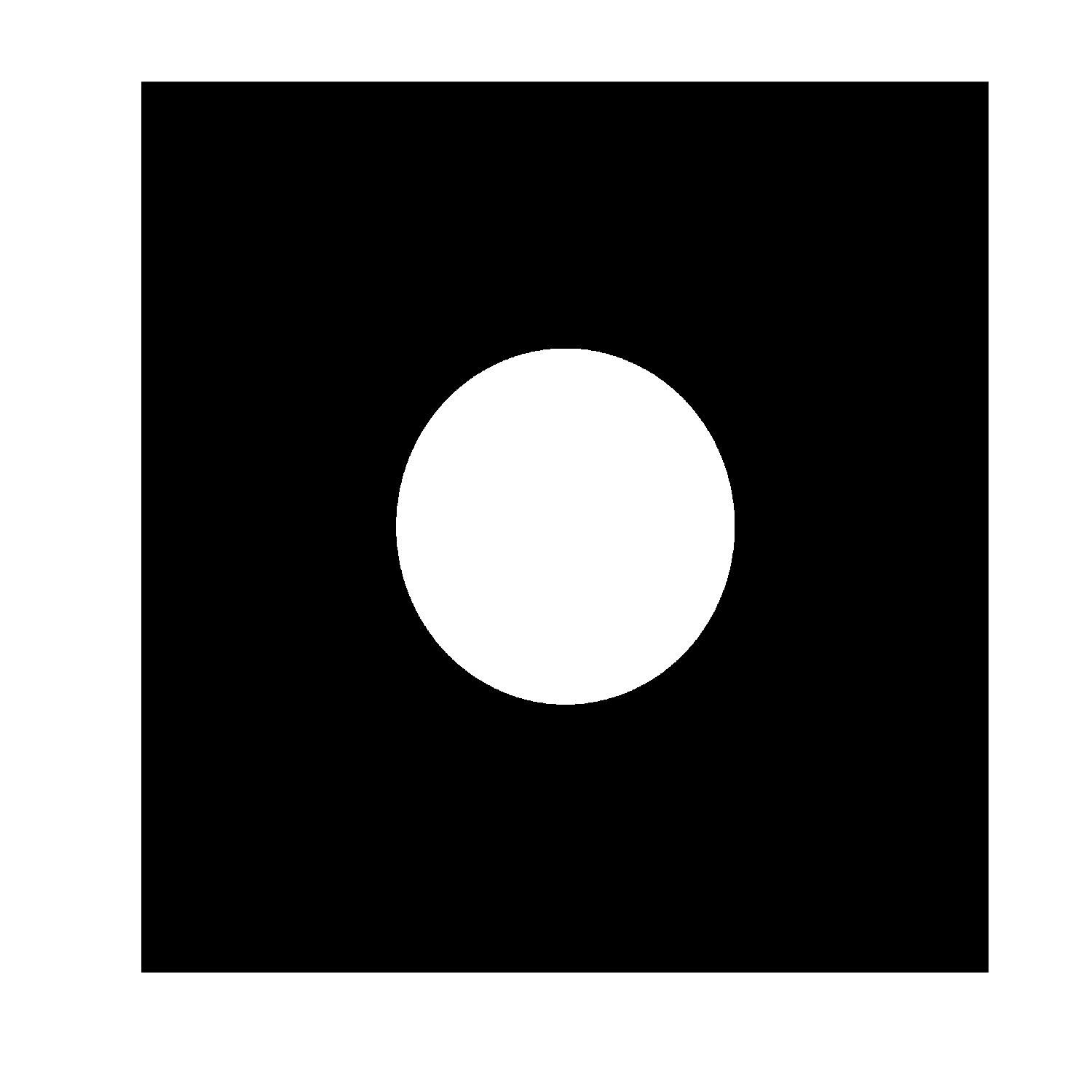}
\includegraphics[width=2.9cm]{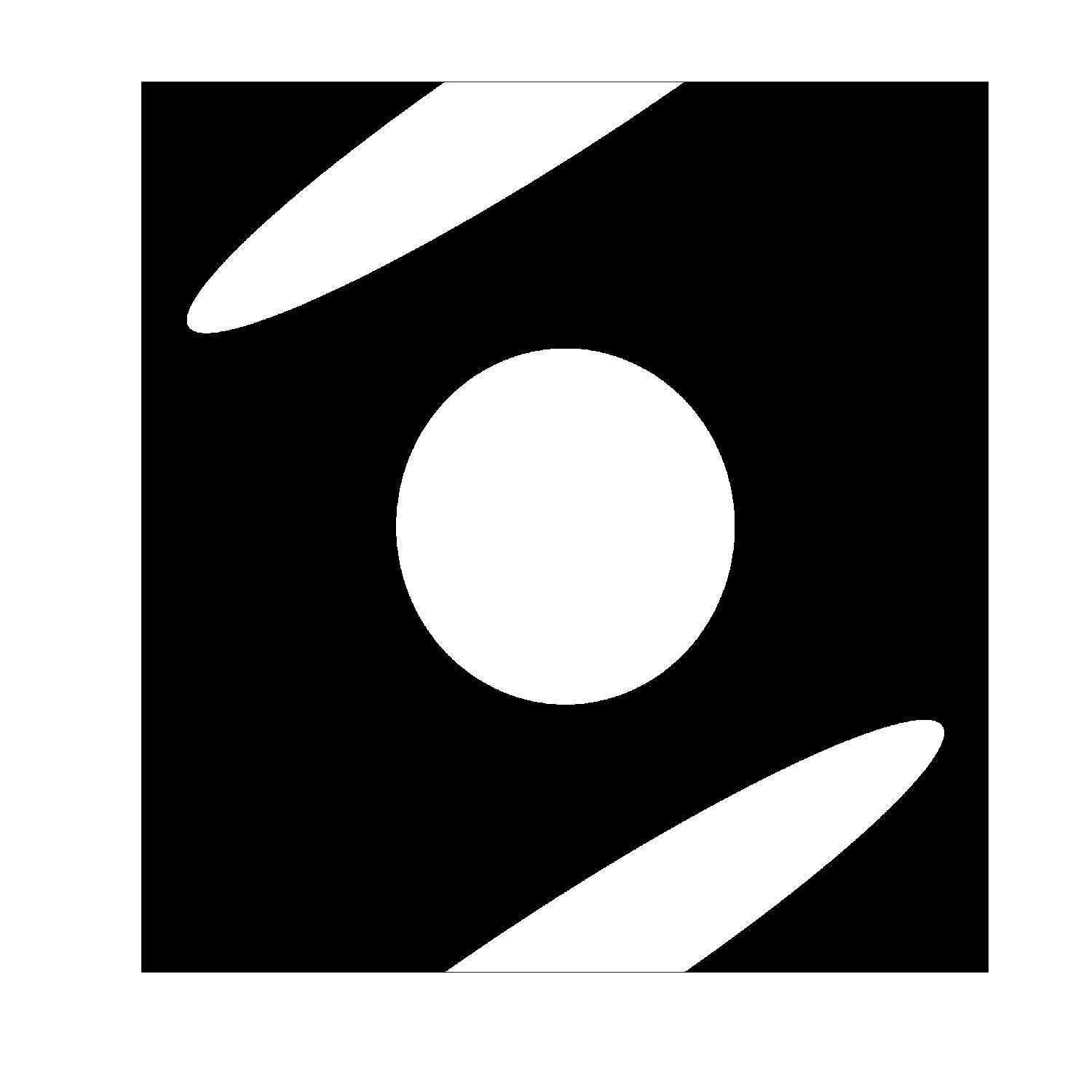}
\includegraphics[width=2.9cm]{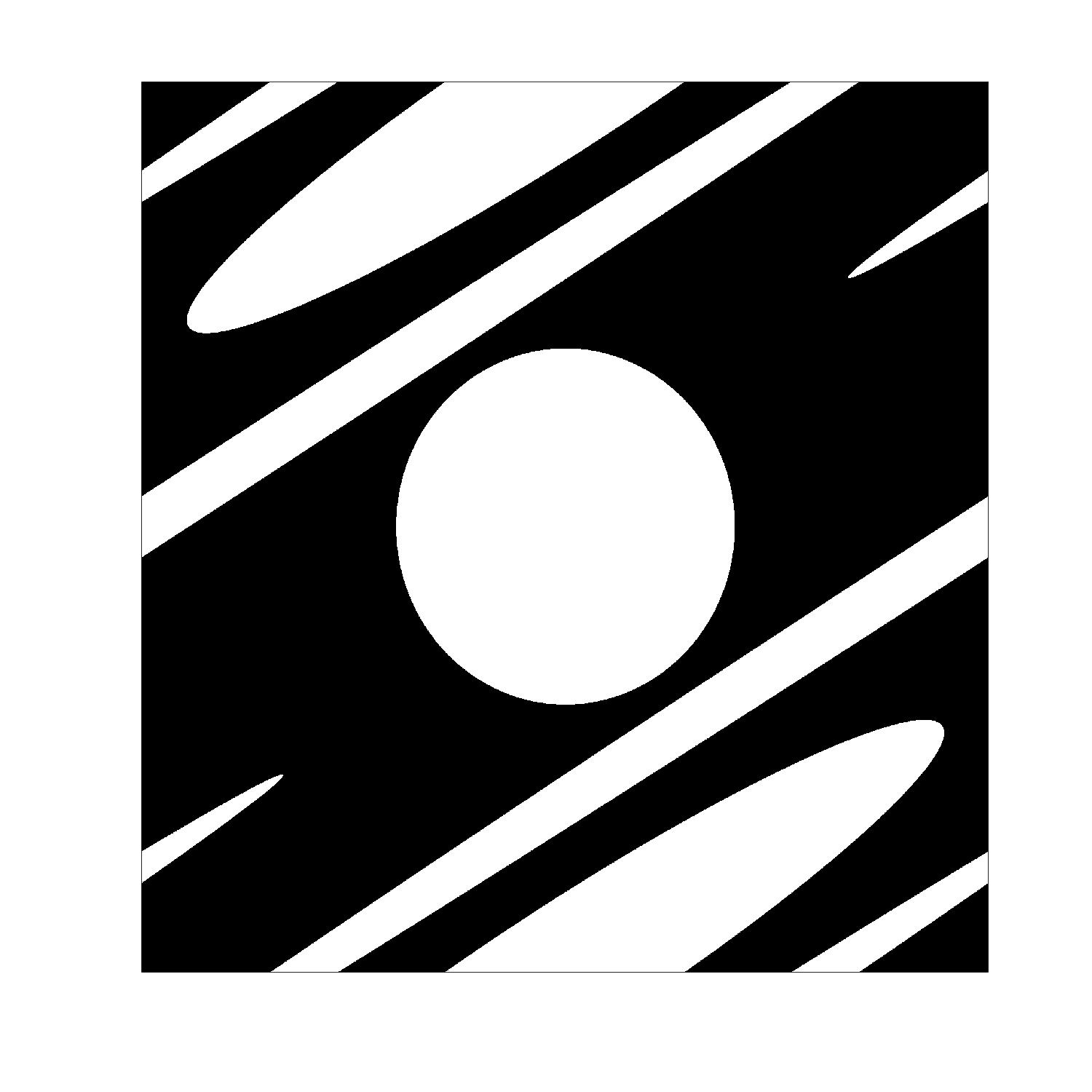}
\includegraphics[width=2.9cm]{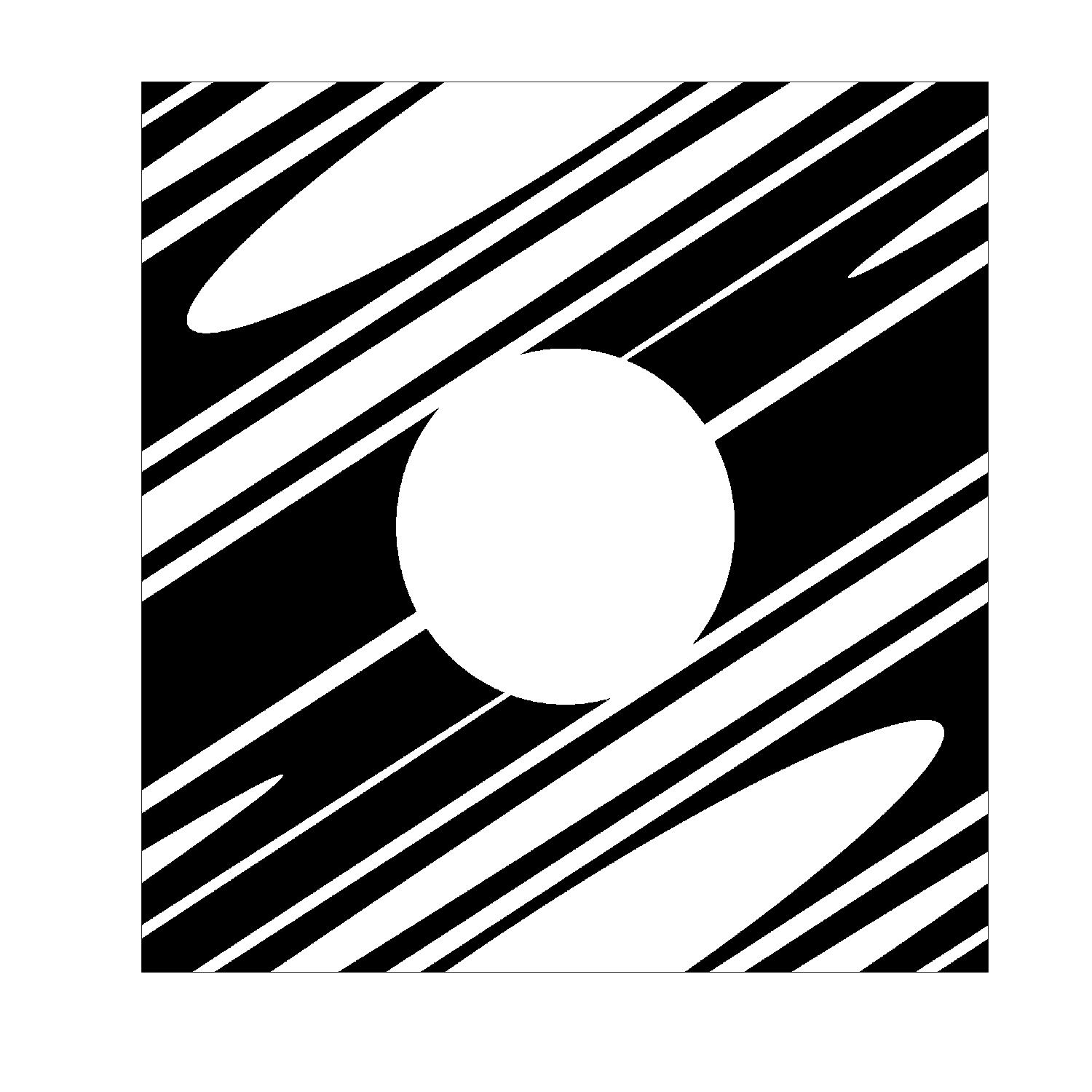}
\includegraphics[width=2.9cm]{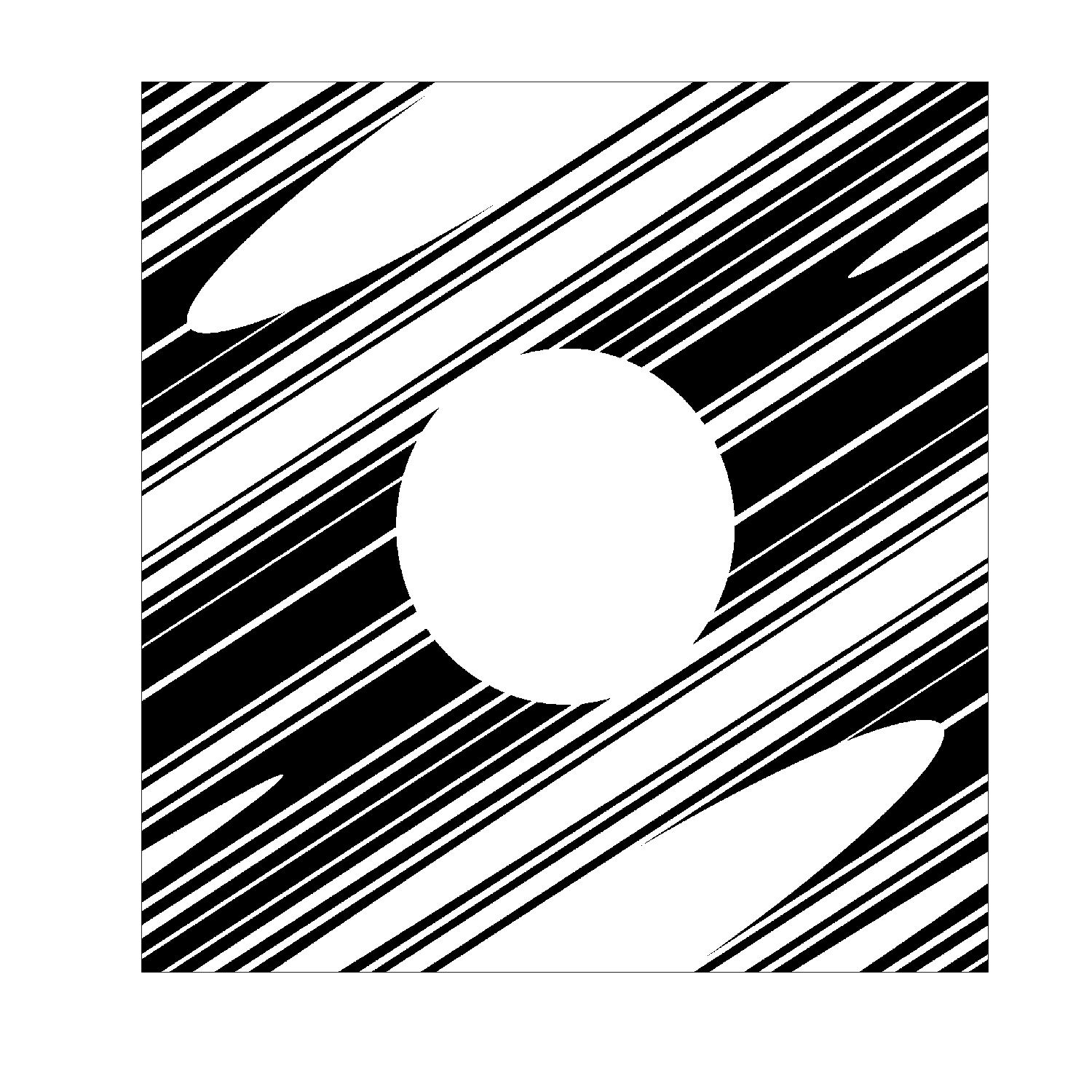}\\
\hbox to\hsize{\hskip1.2cm
$\Gamma_-(0)$ \hss
$\Gamma_-(1)$ \hss
$\Gamma_-(2)$ \hss
$\Gamma_-(3)$ \hss
$\Gamma_-(4)$
\hskip.8cm}
\includegraphics[width=2.9cm]{jpgfig/catpic0.jpg}
\includegraphics[width=2.9cm]{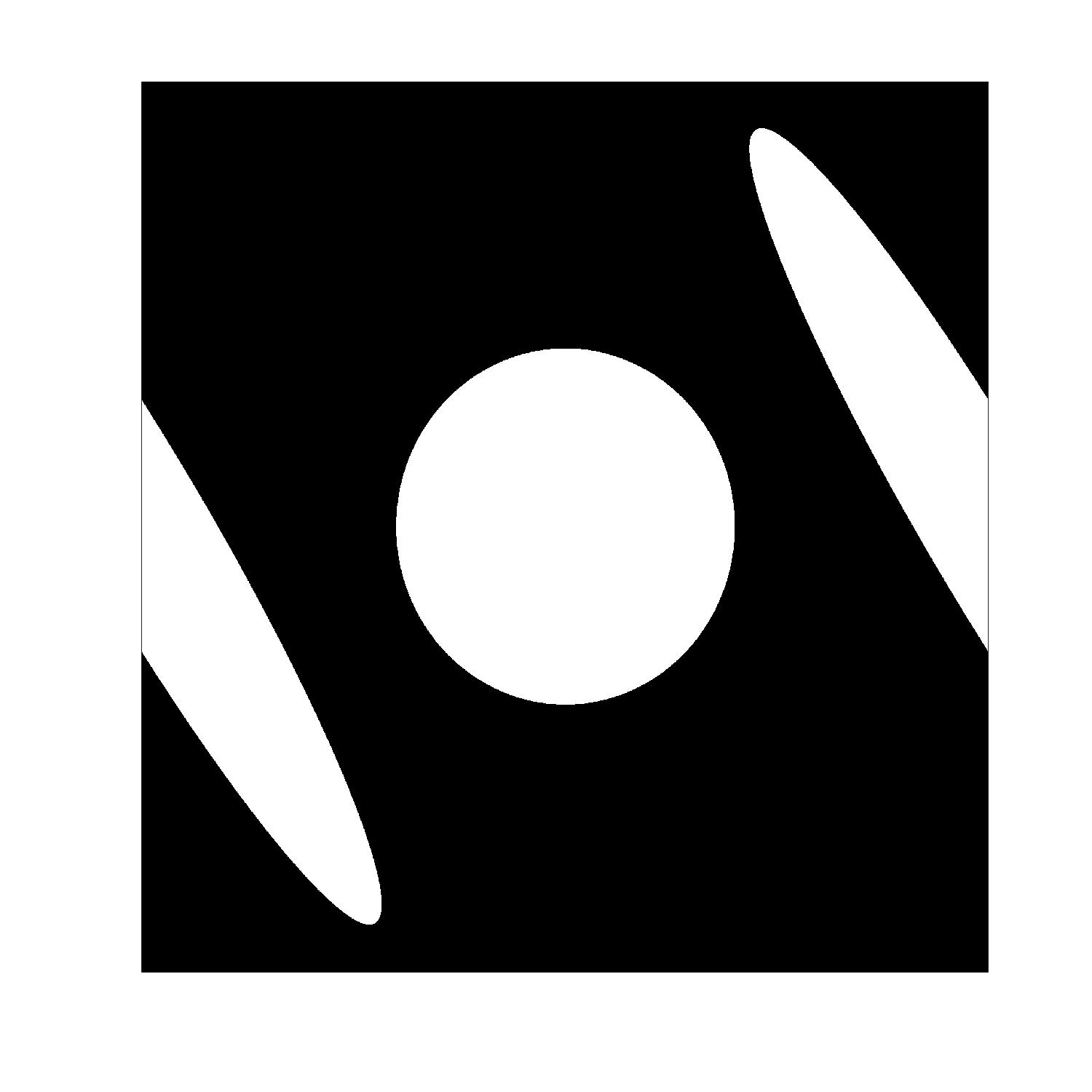}
\includegraphics[width=2.9cm]{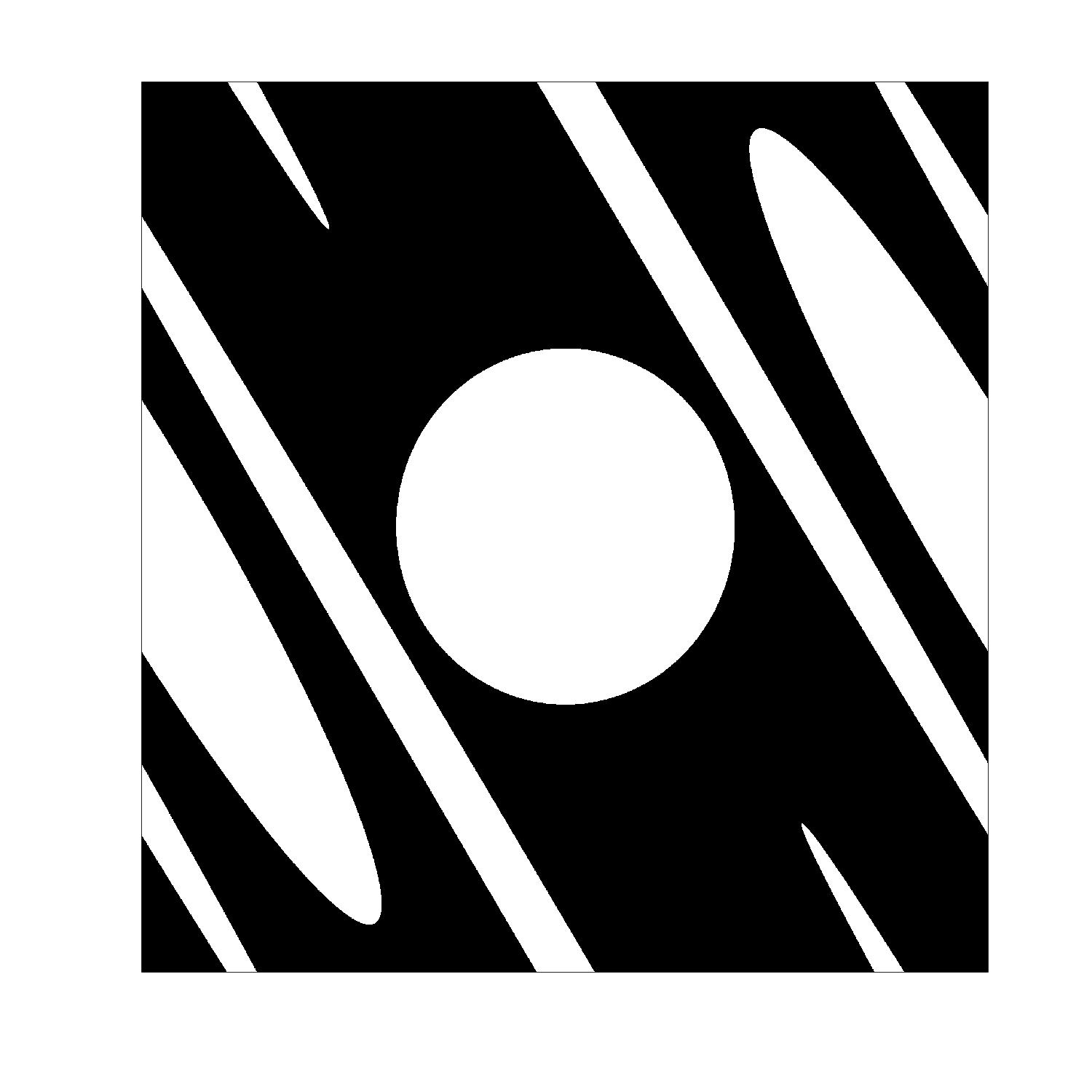}
\includegraphics[width=2.9cm]{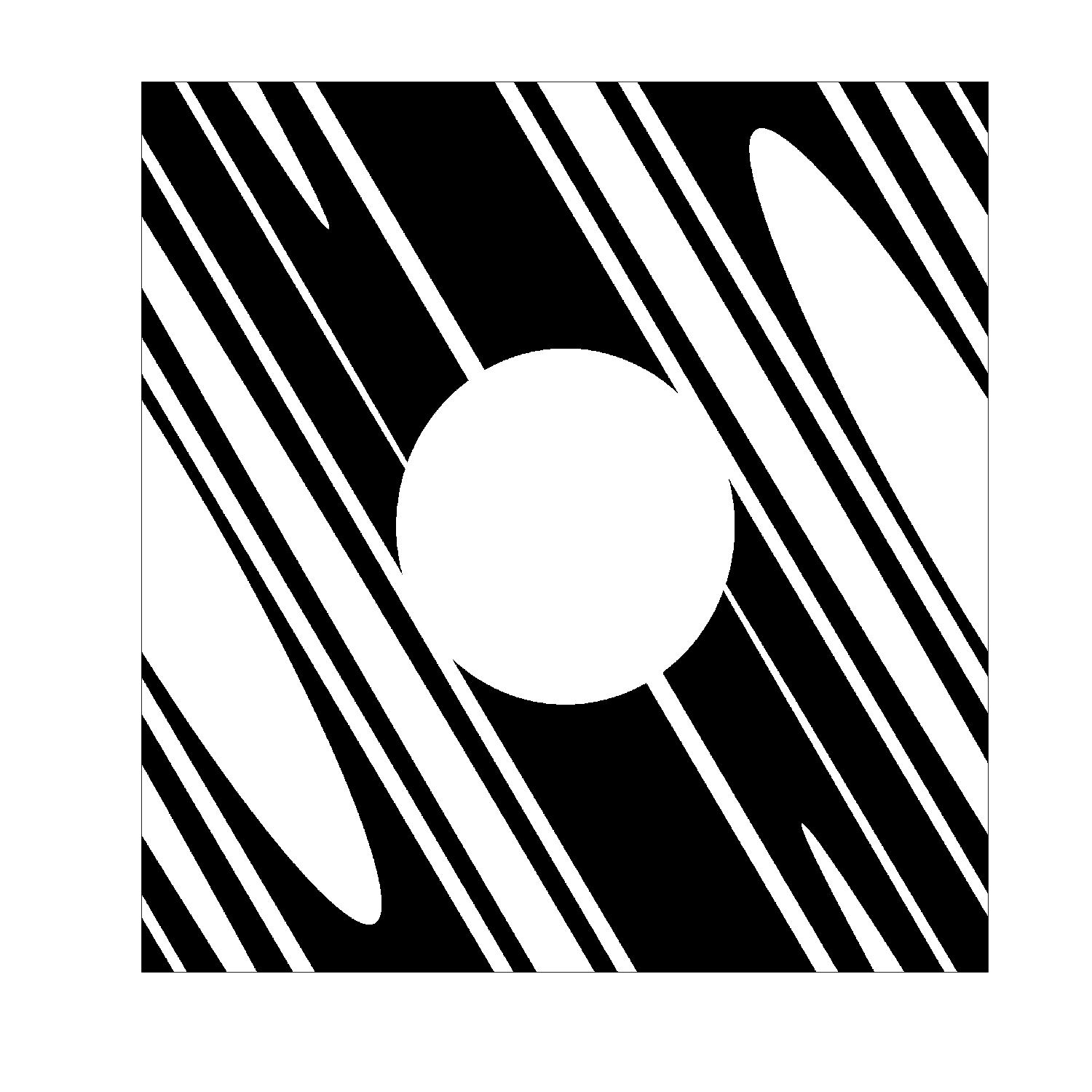}
\includegraphics[width=2.9cm]{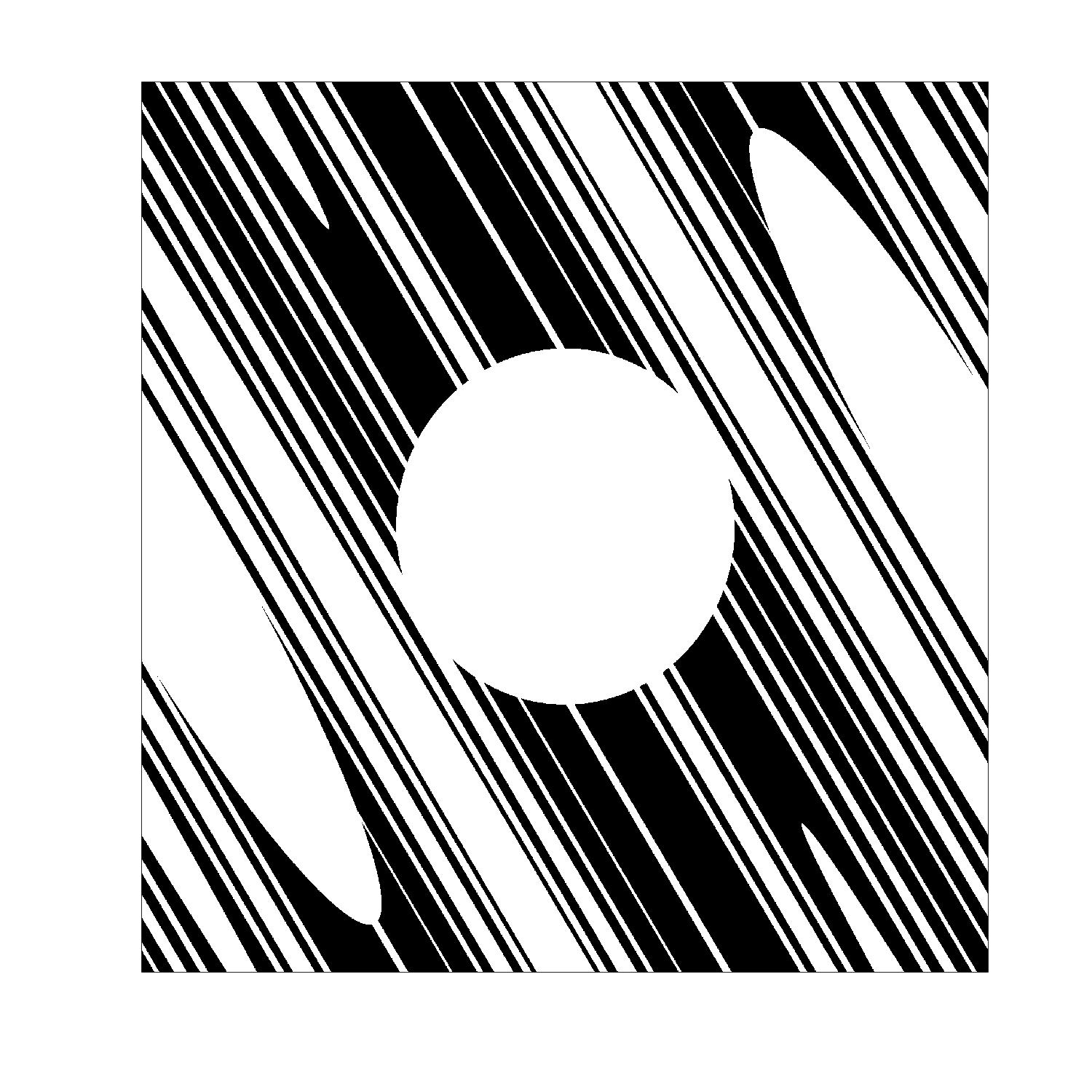}
\caption{The sets $\Gamma_\pm(T)$ (in black) with the flow direction removed.
This figure was produced using numerics for the closely related
Arnold cat map $(x,y)\mapsto (2x+y,x+y)$ on the torus $\mathbb R^2/\mathbb Z^2$.
The `hole' $\pi^{-1}(\Omega)$ is the white disk pictured on the leftmost figures.
We see that the sets $\Gamma_+(T)$ are smooth in the unstable
direction $\mathbb R(2,\sqrt{5}-1)$. Similarly the sets $\Gamma_-(T)$
are smooth in the stable direction $\mathbb R(2,-1-\sqrt 5)$.}
\label{f:porosity}
\end{figure}
%%%%%%%%%%%%%%%%%%%%%%%%%%%%%%%%%%%%%%%%%%%%%%%%%%%%%%%%%%%%%%%%%%%%%%%%%%%%%%%%
To obtain~\eqref{e:fup-A} from the fractal uncertainty principle we use the hyperbolicity
of the geodesic flow, which gives the \emph{stable/unstable decomposition} of the tangent
space to $S^*M$ at each point into
three subspaces: the space tangent to the flow, the stable space, and the unstable space.
The differential of the flow contracts vectors in the stable space
and expands those in the unstable space, with exponential rate $e^t$.

The stable/unstable decomposition implies that the set $\Gamma_+(T)$ is smooth along the unstable direction
and the flow direction, but it is $\nu$-porous on scales $e^{-T}$ to~1 in the stable direction
where porosity is understood similarly to Definition~\ref{d:porous}.
Same is true for the set $\Gamma_-(T)$, with the roles of stable/unstable directions reversed~--
see Figure~\ref{f:porosity}.
The pores at scale $\alpha\in [e^{-T},1]$ come from the restriction
that $\varphi_t(x,\xi)\notin\pi^{-1}(\Omega)$ when $\alpha\sim e^{-|t|}$, and the porosity constant $\nu$ depends on $\Omega$. Using Fourier integral operators
we can deduce the norm bound~\eqref{e:fup-A} from the hyperbolic FUP for porous sets,
Theorem~\ref{t:hfup-porous}.

The case of variable curvature considered in~\cite{varfup} presents many
additional challenges. First of all, the expansion rate of the flow is not constant,
which means that the propagation time $T$ has to depend on the base point.
Secondly, the stable/unstable foliations are not $C^\infty$, thus we cannot put the operators
$A_\pm$ into pseudodifferential calculi defined in~\cite{hgap}, and cannot use Fourier integral
operators to reduce~\eqref{e:fup-A} to an uncertainty estimate~\eqref{e:hfup}.
Finally, even if we could reduce to the estimate~\eqref{e:hfup},
the corresponding phase $\Phi$ would not be $C^\infty$ owing again to the lack of smoothness
of the stable/unstable foliations. The paper~\cite{varfup} thus employs a different strategy to
reduce~\eqref{e:hfup} to the uncertainty principle for the Fourier transform~\eqref{e:fup},
using $C^{1+}$ regularity of the stable/unstable foliations and a microlocal argument
in place of the proof of Theorem~\ref{t:hfup-0} which was described in~\S\ref{s:hfup}.

%%%%%%%%%%%%%%%%%%%%%%%%%%%%%%%%%%%%%%%%%%%%%%%%%%%%%%%%%%%%%%%%%%%%%%%%%%%%%%%%
\subsection{Spectral gaps}
  \label{s:appl-open}

We now give an application of FUP to open quantum chaos,
namely spectral gaps on noncompact hyperbolic surfaces.
Assume that $(M,g)$ is a connected complete noncompact hyperbolic surface
which is \emph{convex co-compact}, that is its infinite ends are funnels.
(See the book of Borthwick~\cite{BorthwickBook} for an introduction
to scattering on hyperbolic surfaces.)
Each such surface can be realized as a quotient
$M=\Gamma\backslash\mathbb H^2$ of the
Poincar\'e upper half-plane model of the hyperbolic space
$$
\mathbb H^2=\{z\in\mathbb C\mid \Im z>0\},\quad
g={|dz|^2\over |\Im z|^2}
$$
by a Schottky group $\Gamma\subset\SL_2(\mathbb R)$ constructed in~\S\ref{s:schottky}.
Here each $\gamma\in\SL_2(\mathbb R)$ defines an isometry of $(\mathbb H^2,g)$
by the formula~\eqref{e:mobius}, where $x\in\dot{\mathbb R}$ is replaced
by $z\in \mathbb H^2$. If $I_1,\dots,I_{2r}$ are the intervals used to define
the Schottky structure and $D_w\subset\mathbb C$, $w\in\{1,\dots,2r\}$, are disks with diameters $I_w$,
then $M$ can be obtained from the fundamental domain
\begin{equation}
  \label{e:fund-domain}
\mathcal F:=\mathbb H^2\setminus\bigsqcup_{w=1}^{2r} D_w
\end{equation}
by gluing each half-circle $\mathbb H^2\cap \partial D_w$ with $\mathbb H^2\cap\partial D_{\overline w}$
by the map $\gamma_w$.
See~\cite[\S15.1]{BorthwickBook} for more details
and Figure~\ref{f:3f} for an example. We assumed in~\S\ref{s:schottky}
that $r\geq 2$, this corresponds to $\Gamma$ being a nonelementary group;
equivalently, we assume that $M$ is neither the hyperbolic space nor a hyperbolic cylinder.
%%%%%%%%%%%%%%%%%%%%%%%%%%%%%%%%%%%%%%%%%%%%%%%%%%%%%%%%%%%%%%%%%%%%%%%%%%%%%%%%
\begin{figure}
\includegraphics[width=8cm]{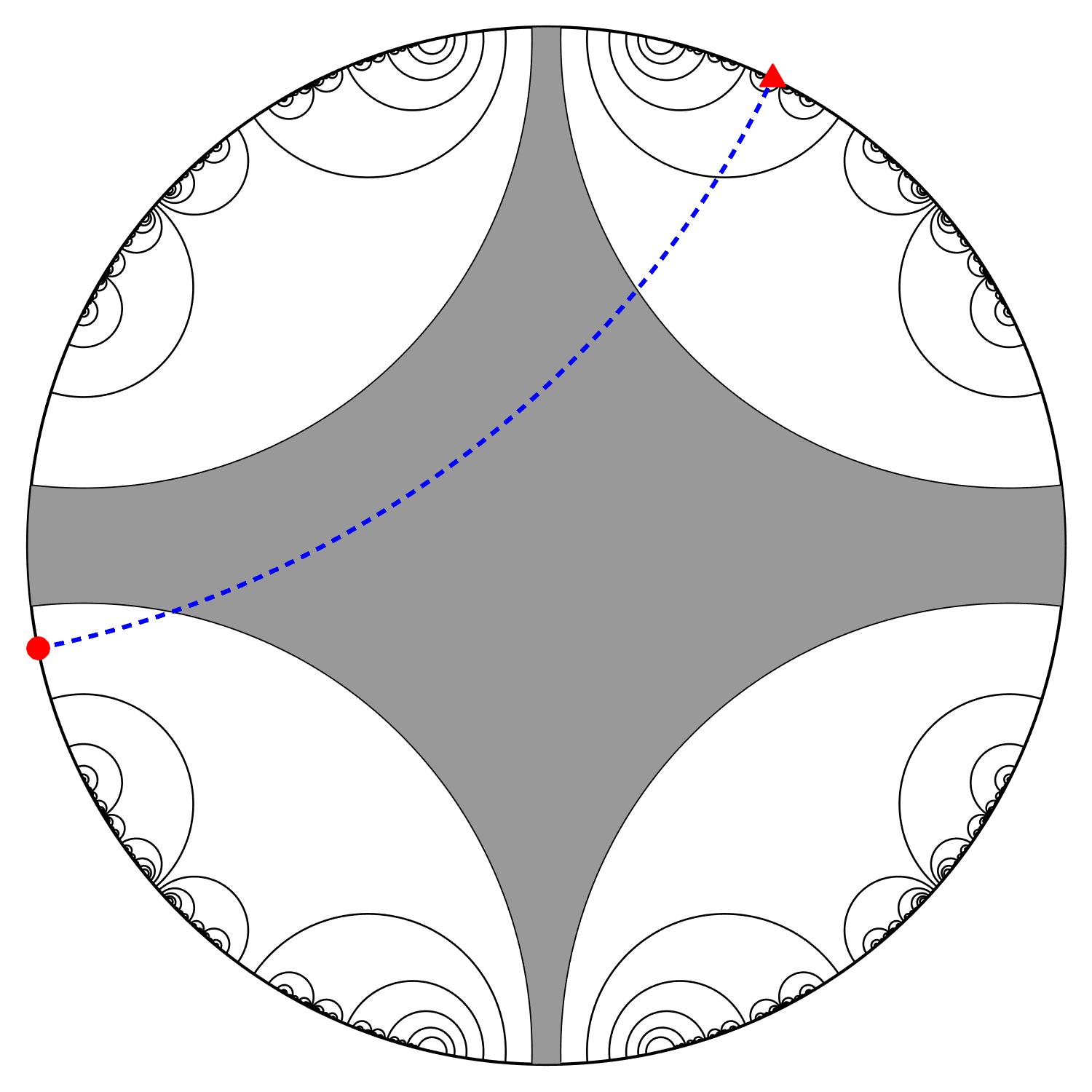}
\caption{A tessellation of the hyperbolic plane $\mathbb H^2$ (pictured here
in the Poincar\'e disk model, with $\dot{\mathbb R}$ pictured as the unit circle) by fundamental domains of a convex co-compact
group $\Gamma$ constructed using the procedure
in~\S\ref{s:schottky} with $r=2$. The quotient $\Gamma\backslash\mathbb H^2$
is a surface with three funnel ends. The fundamental domain
defined in~\eqref{e:fund-domain} is shaded. The dashed line is a geodesic
on $\mathbb H^2$ whose limiting starting point (denoted by the circle) does not
lie in the limit set $\Lambda_\Gamma$ but whose limiting ending point (denoted by the triangle)
lies in the limit set. The projection of this geodesic to the quotient
is trapped as $t\to\infty$ but not as $t\to-\infty$.}
\label{f:3f}
\end{figure}
%%%%%%%%%%%%%%%%%%%%%%%%%%%%%%%%%%%%%%%%%%%%%%%%%%%%%%%%%%%%%%%%%%%%%%%%%%%%%%%%
The limit set $\Lambda_\Gamma\subset\mathbb R$, defined in~\eqref{e:schottky-X},
determines the structure of trapped geodesics on $M$. More precisely,
we say a geodesic $\theta(t)$ on~$M$ is \emph{trapped as $t\to+\infty$} if
$\theta(t)$ does not go to an infinite end of~$M$ as $t\to +\infty$.
Similarly we define the notion of being trapped as $t\to -\infty$.
We lift $\theta$ to a geodesic on $\mathbb H^2$, which is
a half-circle starting at some point $\theta_-\in\dot{\mathbb R}$
and ending at some point $\theta_+\in\dot{\mathbb R}$.
Then
\begin{equation}
  \label{e:trapped-rel}
\theta\text{ is trapped as }t\to\pm\infty\quad\Longleftrightarrow\quad
\theta_\pm\in\Lambda_\Gamma.
\end{equation}
See Figure~\ref{f:3f} and~\cite[\S14.8]{BorthwickBook}.

We now define the `quantum' objects associated to the surface $M$,
called \emph{scattering resonances}. These are the poles of the meromorphic
continuation of the $L^2$ resolvent,
$$
R(\lambda)=\Big(-\Delta_g-\lambda^2-{1\over 4}\Big)^{-1}:
\begin{cases}
L^2(M)\to L^2(M),&\Im\lambda>0;\\
L^2_{\comp}(M)\to L^2_{\loc}(M),&\Im\lambda\leq 0.
\end{cases}
$$
See~\cite[Chapter~5]{dizzy} and~\cite[Chapter~8]{BorthwickBook} for the existence of this meromorphic continuation and an overview of hypebolic scattering,
and Figure~\ref{f:resonances} for an example. 

Resonances describe long time behavior of solutions to the (modified) wave equation
$(\partial_t^2-\Delta_g-{1\over 4})v(t,x)=0$
by the following resonance expansion%
\footnote{To simplify the formula~\eqref{e:resex}, we
assumed that $R(\lambda)$ has simple poles. Also, to prove a resonance expansion
one typically needs to make assumptions on high frequency behavior of $R(\lambda)$
such as the essential spectral gap which we study here.}
\begin{equation}
  \label{e:resex}
v(t,x)\sim \sum_{\lambda_j\text{ resonance}}e^{-it\lambda_j}v_j(x)\quad\text{as }
t\to\infty,\
x\text{ bounded}.
\end{equation}
See~\cite[Chapter~1]{dizzy} for more details and various applications of resonances.

%%%%%%%%%%%%%%%%%%%%%%%%%%%%%%%%%%%%%%%%%%%%%%%%%%%%%%%%%%%%%%%%%%%%%%%%%%%%%%%%
\begin{figure}
\includegraphics[width=15cm]{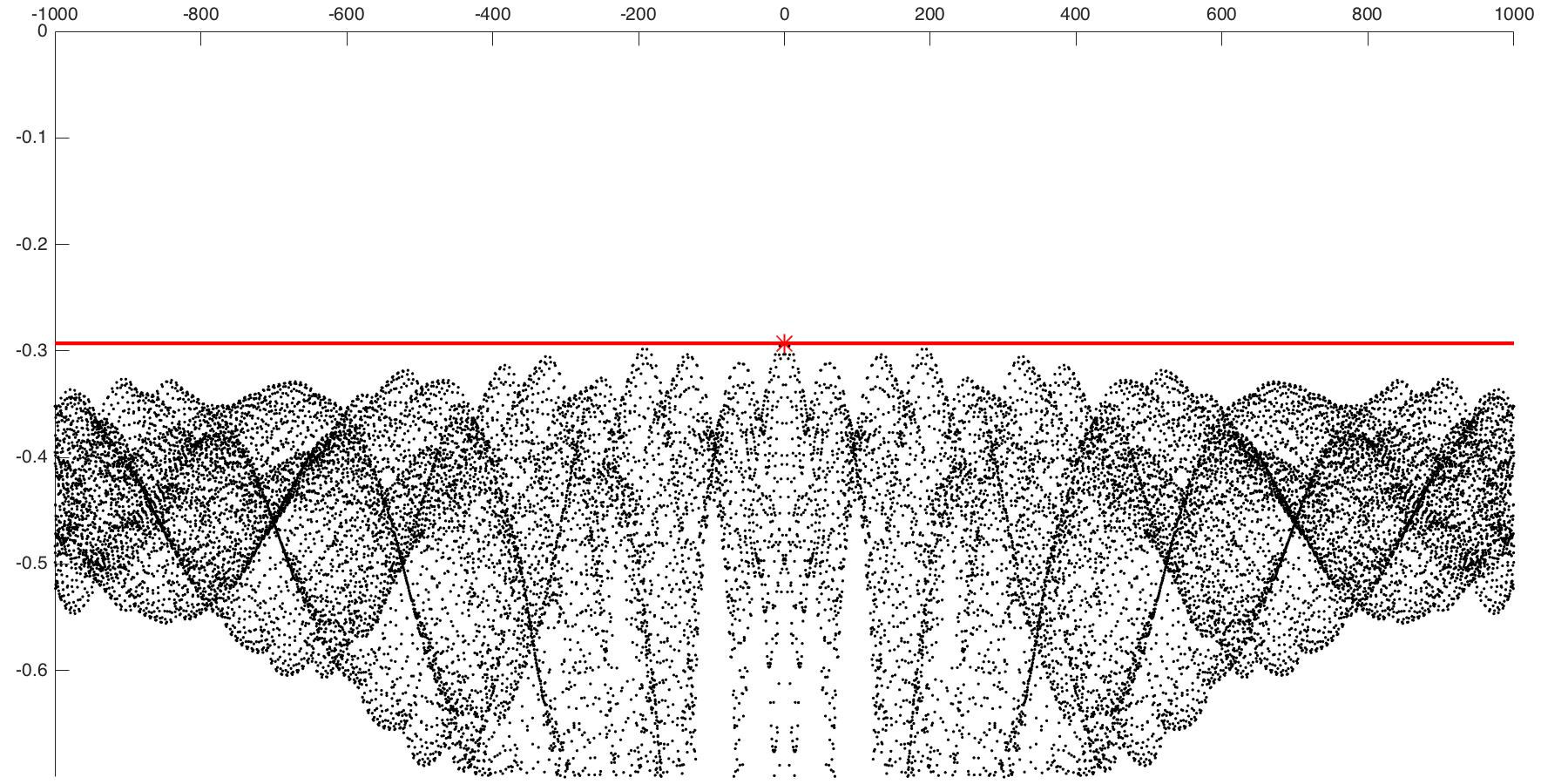}
\caption{Resonances for a three-funnel hyperbolic surface (similar to the
one on Figure~\ref{f:3f}), plotted using data provided by David Borthwick
and Tobias Weich. The topmost resonance $i(\delta-{1\over 2})$ is marked
by an asterisk and the red line is $\{\Im z=\delta-{1\over 2}\}$.
See~\cite{BorthwickNum,Borthwick-Weich}, and~\cite[Chapter~16]{BorthwickBook}
for more pictures of resonances of hyperbolic surfaces. As explained in these
works, numerically estimating the size of the essential spectral gap
is very challenging.}
\label{f:resonances}
\end{figure}
%%%%%%%%%%%%%%%%%%%%%%%%%%%%%%%%%%%%%%%%%%%%%%%%%%%%%%%%%%%%%%%%%%%%%%%%%%%%%%%%

The main topic of this section is the concept of an essential spectral gap:
%%%%%%%%%%%%%%%%%%%%%%%%%%%%%%%%%%%%%%%%%%%%%%%%%%%%%%%%%%%%%%%%%%%%%%%%%%%%%%%%
\begin{defi}
  \label{d:spectral-gap}
We say that $M$ has an \textbf{essential spectral gap} of size $\beta$,
if the half-plane $\{\Im\lambda\geq -\beta\}$ only has finitely many resonances.
(Such a gap is nontrivial only for $\beta>0$.)
\end{defi}
%%%%%%%%%%%%%%%%%%%%%%%%%%%%%%%%%%%%%%%%%%%%%%%%%%%%%%%%%%%%%%%%%%%%%%%%%%%%%%%%
In the expansion~\eqref{e:resex}, the real part of a resonance $\lambda_j$ gives the
rate of oscillation of the function $e^{-it\lambda_j}$, and the (negative)
imaginary part gives the rate of decay.
Thus an essential spectral gap of size $\beta>0$ implies exponential decay $\mathcal O(e^{-\beta t})$
of solutions to the wave equation, modulo a finite dimensional space
corresponding to resonances with $\Im\lambda_j\geq -\beta$.

We emphasize that resonances can be defined for a variety of quantum open systems
(for instance, obstacle scattering or wave equations on black holes) and having an essential spectral
gap is equivalent to exponential local energy decay of high frequency waves,
see for instance~\cite[Theorems~2.9 and~5.40]{dizzy}. This in particular has applications
to nonlinear equations, such as black hole stability (see Hintz--Vasy~\cite{HintzVasyGreat})
and Strichartz estimates (see Burq--Guillarmou--Hassell~\cite{BGH}
and Wang~\cite{WangJian}).

Existence of an essential spectral gap depends on the structure of trapped classical trajectories (see~\cite[Chapter~6]{dizzy}). For a convex co-compact hyperbolic surface,
the set of all trapped geodesics has fractal structure (by~\eqref{e:trapped-rel})
and the geodesic flow has hyperbolic behavior on this set (namely it has a stable/unstable decomposition). Thus convex co-compact hyperbolic surfaces
serve as a model for more general systems with fractal hyperbolic trapped sets.
The latter class includes scattering by several convex obstacles (see Figure~\ref{f:obstacles}),
where spectral gaps have been observed in microwave scattering experiments
by Barkhofen et al.~\cite{ZworskiPRL}. We refer to the reviews of Nonnenmacher~\cite{Nonnenmacher} and Zworski~\cite{ZworskiReview} for an overview of results
on spectral gaps for open quantum chaotic systems.
%%%%%%%%%%%%%%%%%%%%%%%%%%%%%%%%%%%%%%%%%%%%%%%%%%%%%%%%%%%%%%%%%%%%%%%%%%%%%%%%
\begin{figure}
\includegraphics[height=5cm]{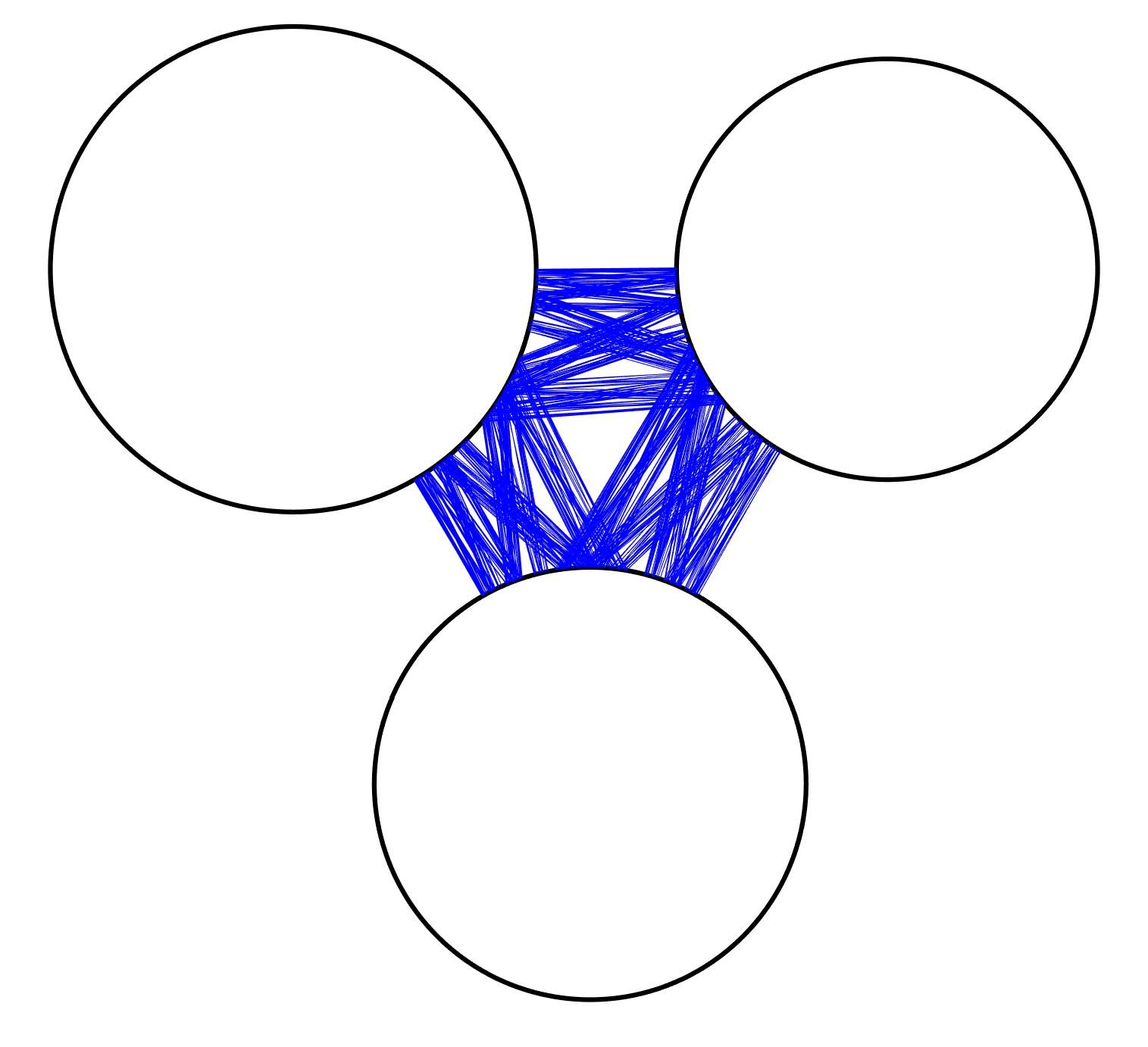}
\includegraphics[height=5cm]{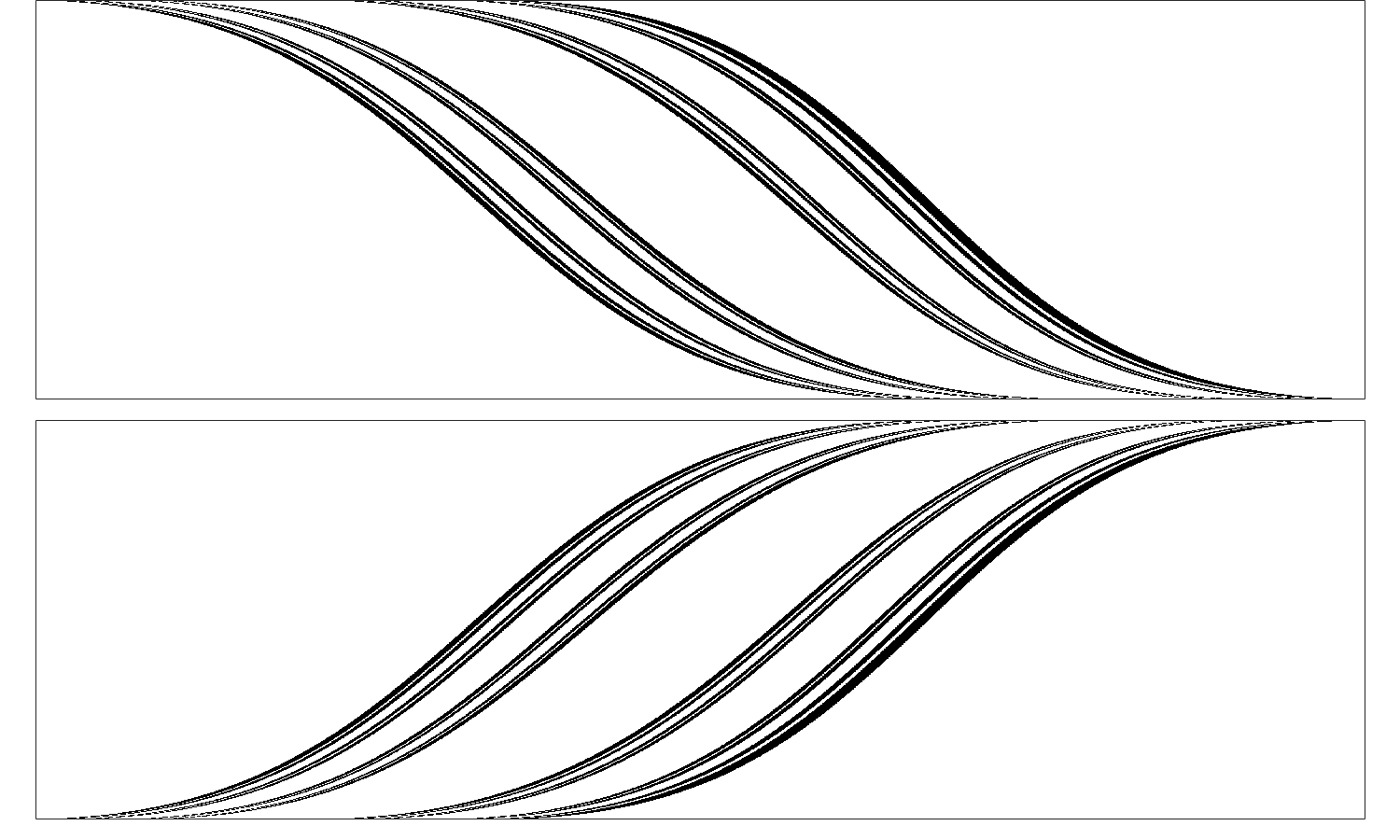}
\caption{A `real-world' example of a system with fractal hyperbolic trapping:
the billiard ball map on the exterior of three disks in~$\mathbb R^2$. The trapped billiard
trajectories are shown on the left. On the right is the set of trajectories
trapped backward (top) and forward (bottom)
restricted to the boundary of one disk, where the horizontal variable is the point on the boundary
and the vertical variable is the angle of the tangent vector to the boundary.
(See~\cite[\S5.2]{stunnote} for details.)
The fractal uncertainty principle in this setting would say that no function can be localized
in the position-frequency space to both forward and backward trapped sets.}
\label{f:obstacles}
\end{figure}
%%%%%%%%%%%%%%%%%%%%%%%%%%%%%%%%%%%%%%%%%%%%%%%%%%%%%%%%%%%%%%%%%%%%%%%%%%%%%%%%

Coming back to hyperbolic surfaces, it is well-known that
there is an essential spectral gap of size $\beta=0$.
In fact, resonances with $\Im\lambda>0$ correspond to the (finitely many) $L^2$ eigenvalues
of~$-\Delta_g$ in $[0,{1\over 4})$.
There is also the \emph{Patterson--Sullivan gap} $\beta={1\over 2}-\delta$
(see~\cite{Patterson3,Sullivan})
where $\delta\in (0,1)$ is the dimension of the limit set (see~\S\ref{s:schottky}).
In fact, the resonance with the largest imaginary part
is given by $\lambda=i(\delta-{1\over 2})$, see~\cite[Theorem~14.15]{BorthwickBook}.
Thus we have an essential spectral gap of the size $\beta=\max(0,{1\over 2}-\delta)$.

The application of FUP to spectral gaps is based on the following
%%%%%%%%%%%%%%%%%%%%%%%%%%%%%%%%%%%%%%%%%%%%%%%%%%%%%%%%%%%%%%%%%%%%%%%%%%%%%%%%
\begin{theo}\cite{hgap,tug}
\label{t:fup-open-rel}
Let $M=\Gamma\backslash\mathbb H^2$ be a convex co-compact
hyperbolic surface and $\Lambda_\Gamma\subset\mathbb R$ be the limit
set of the group~$\Gamma$. Denote by $\Lambda_\Gamma(h)$ the $h$-neighborhood
of~$\Lambda_\Gamma$.

Assume that $X=Y=\Lambda_\Gamma(h)$ satisfies the
hyperbolic uncertainty principle~\eqref{e:hfup} with some exponent $\beta>0$,
for the phase function $\Phi(x,y)=\log|x-y|$ from~\eqref{e:Phi-hyp}
and every choice of the amplitude~$b\in\CIc(\mathbb R^2\setminus \{x=y\})$. Then $M$ has an essential spectral
gap of size $\beta-\varepsilon$ for each $\varepsilon>0$.
\end{theo}
%%%%%%%%%%%%%%%%%%%%%%%%%%%%%%%%%%%%%%%%%%%%%%%%%%%%%%%%%%%%%%%%%%%%%%%%%%%%%%%%
Two different proofs of Theorem~\ref{t:fup-open-rel} are given in~\cite{hgap} and~\cite{tug}.
The proof in~\cite{hgap} uses microlocal methods
similar to the proof of Theorem~\ref{t:appl-eig}. Roughly speaking,
if $\lambda$ is a resonance with $|\Re\lambda|=h^{-1}\gg 1$ and $\Im\lambda=-\nu$, then
there exists a resonant state which is a solution $u$ to the equation
$(-\Delta_g-\lambda^2-{1\over 4})u=0$ satisfying a certain outgoing condition
at the infinite ends of~$M$.
Next, $u$ is microlocalized $h$-close to the set of backward
trapped trajectories, and it has mass at least $h^{2\nu}$ on the $h$-neighborhood
of the set of forward trapped trajectories (here mass is the square of the $L^2$ norm). The fractal uncertainty principle
then implies that $h^\nu\leq h^\beta$, that is $\nu\geq \beta$. Here
the limit set enters via the description of trapped trajectories in~\eqref{e:trapped-rel}.
Compared to the compact setting described in~\S\ref{s:appl-closed}
a key additional ingredient is the work of Vasy~\cite{Vasy-AH1,Vasy-AH2}
on effective meromorphic continuation of the scattering resolvent.

The other proof of Theorem~\ref{t:fup-open-rel}, given in~\cite{tug},
proceeds by bounding the spectral radius of the transfer operator of the Bowen--Series map.
That proof is much shorter than~\cite{hgap} but the method is less likely to
be applicable to more general open hyperbolic systems.

Combining Theorem~\ref{t:fup-open-rel} with the fractal uncertainty principle
from Theorems~\ref{t:hfup-0}--\ref{t:hfup-pres} we obtain
%%%%%%%%%%%%%%%%%%%%%%%%%%%%%%%%%%%%%%%%%%%%%%%%%%%%%%%%%%%%%%%%%%%%%%%%%%%%%%%%
\begin{theo}
\label{t:fup-open}
Let $M,\Lambda_\Gamma$ be as in Theorem~\ref{t:fup-open-rel}
and $\delta\in (0,1)$ be the dimension of~$\Lambda_\Gamma$. Then
$M$ has an essential spectral gap of size $\beta$ for some
$\beta>\max(0,{1\over 2}-\delta)$. See Figure~\ref{f:gaps-hyp}.
\end{theo}
%%%%%%%%%%%%%%%%%%%%%%%%%%%%%%%%%%%%%%%%%%%%%%%%%%%%%%%%%%%%%%%%%%%%%%%%%%%%%%%%
\begin{figure}
\includegraphics{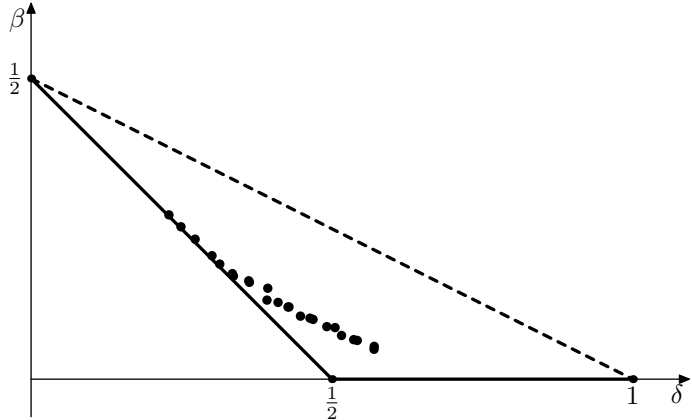}
\caption{Numerically computed essential spectral gaps $\beta$ for symmetric 3-funneled and 4-funneled surfaces from~\cite[Figure~14]{Borthwick-Weich} (specifically, $G_{100}^{I_1}$ in the notation
of~\cite{Borthwick-Weich}). Each point corresponds to one surface and has
coordinates $(\delta,\beta)$. The solid line is the standard gap $\beta=\max(0,{1\over 2}-\delta)$.
The dashed line is the Jakobson--Naud conjecture $\beta={1-\delta\over 2}$.}
\label{f:gaps-hyp}
\end{figure}
%%%%%%%%%%%%%%%%%%%%%%%%%%%%%%%%%%%%%%%%%%%%%%%%%%%%%%%%%%%%%%%%%%%%%%%%%%%%%%%%
An essential spectral gap of size $\beta>{1\over 2}-\delta$
when $0<\delta\leq{1\over 2}$ was previously established by Naud~\cite{NaudGap}.
This improvement over the Patterson--Sullivan gap was used to
get an asymptotic formula for the number $\mathcal N(L)$ of primitive closed geodesics
of period $\leq L$ of the form
$$
\mathcal N(L)=\li(e^{\delta T})+\mathcal O(e^{(\delta-\varepsilon)T}),\quad
\li(x):=\int_2^x{dt\over\log t}
$$
for some $\varepsilon>0$, see~\cite[Theorem~1.4]{NaudGap}.
Spectral gaps with $\beta>{1\over 2}-\delta$ also have important applications
to diophantine problems in number theory, see Bourgain--Gamburd--Sarnak~\cite{BGS},
Magee--Oh--Winter~\cite{MOW}, and the review of Sarnak~\cite{SarnakThin}.
A spectral gap $\beta>{1\over2}-\delta$ depending only on the dimension~$\delta$ of the limit set is given in Theorem~\ref{t:fdec-used} below.

Jakobson--Naud~\cite{Jakobson-Naud} conjectured an essential spectral gap of size $\beta={1-\delta\over 2}$,
see Figure~\ref{f:gaps-hyp}. This conjecture corresponds
to the upper limit of possible results that could be proved using FUP:
indeed, by applying $\indic_{\Lambda_\Gamma(h)} \mathcal B_h\indic_{\Lambda_\Gamma(h)}$ to a function localized in an $h$-sized
interval inside $\Lambda_\Gamma(h)$ and using that $\vol(\Lambda_\Gamma(h))\sim h^{1-\delta}$ we see that
if~\eqref{e:hfup} holds with some value of $\beta$, then we necessarily have $\beta\leq {1-\delta\over 2}$.
While the Jakobson--Naud conjecture is out of reach of current methods,
its analogue is known to hold in certain special cases
in the `toy model' setting of open quantum cat maps, see~\cite[\S3.5]{oqm}.

For more general open systems with hyperbolic
trapping, an essential spectral gap was known under a \emph{pressure condition}
which generalizes the inequality $\delta<{1\over 2}$,
by Ikawa~\cite{Ikawa}, Gaspard--Rice~\cite{GaspardRice}, and Nonnenmacher--Zworski~\cite{NonnenmacherZworskiActa}. In some cases there exists a gap
strictly larger than the pressure gap: see Petkov--Stoyanov~\cite{PetkovStoyanov}
and Stoyanov~\cite{Stoyanov1,Stoyanov2}, in addition to the work of Naud mentioned above.

In contrast with the pressure gap and improvements over it, Theorem~\ref{t:fup-open}
gives an essential spectral gap $\beta>0$ for all convex co-compact hyperbolic surfaces.
This makes it a special case of the conjecture of Zworski~\cite[\S3.2, Conjecture~3]{ZworskiReview}
that \emph{every open hyperbolic system has an essential spectral gap}.

%%%%%%%%%%%%%%%%%%%%%%%%%%%%%%%%%%%%%%%%%%%%%%%%%%%%%%%%%%%%%%%%%%%%%%%%%%%%%%%%
%%%%%%%%%%%%%%%%%%%%%%%%%%%%%%%%%%%%%%%%%%%%%%%%%%%%%%%%%%%%%%%%%%%%%%%%%%%%%%%%
\section{FUP for discrete Cantor sets}
  \label{s:cantor}

We now discuss FUP for a special class of regular fractal sets, namely
discrete Cantor sets. In this setting we provide a complete proof of the
fractal uncertainty principle of Theorems~\ref{t:fup-0}--\ref{t:fup-pres}.
In~\cite{oqm} this special case of FUP was applied to obtain an essential
spectral gap for the `toy model' of \emph{quantum open baker's maps},
similarly to the application to convex co-compact hyperbolic surfaces
discussed in~\S\ref{s:appl-open}. We refer to~\cite{oqm} for a discussion
of these quantum maps and more qualitative information on FUP for Cantor sets.

A discrete Cantor set is a subset of $\mathbb Z_N:=\{0,\dots,N-1\}$
of the form
\begin{equation}
  \label{e:discrete-cantor}
\mathcal C_k:=\big\{ a_0+a_1M+\cdots+a_{k-1}M^{k-1}\mid
a_0,\dots,a_{k-1}\in\mathcal A\big\},\quad
N:=M^k
\end{equation}
where $k$ (called the \emph{order} of the set) is a large natural number and we fixed
\begin{itemize}
\item an integer $M\geq 3$, called the \emph{base}, and
\item a nonempty subset $\mathcal A\subset \{0,\dots,M-1\}$,
called the \emph{alphabet}.
\end{itemize}
In other words, $\mathcal C_k$ is the set of numbers of length $k$ in base $M$
with all digits in $\mathcal A$. Note that
$|\mathcal C_k|=|\mathcal A|^k=N^\delta$ where the dimension $\delta$
is defined by
\begin{equation}
  \label{e:delta-discrete}
\delta:={\log|\mathcal A|\over \log M}\in [0,1].
\end{equation}
We have $0<\delta<1$ except in the trivial cases $|\mathcal A|=1$ and $|\mathcal A|=M$.
The number $\delta$ is the dimension of the limiting Cantor set
\begin{equation}
  \label{e:continuous-cantor}
\mathcal C_\infty:=\bigcap_{k\geq 1}\bigcup_{j\in\mathcal C_k} \Big[{j\over M^k},{j+1\over M^k}\Big]\subset [0,1].
\end{equation}
More precisely, $\mathcal C_\infty$ is $\delta$-regular on scales~0 to~1 similarly
to Example~\ref{x:cantor-3},
see~\cite[Lemma~5.4]{regfup} for more details. The middle third Cantor set corresponds to $M=3$, $\mathcal A=\{0,2\}$.

The main result of this section is the following discrete version
of FUP:
%%%%%%%%%%%%%%%%%%%%%%%%%%%%%%%%%%%%%%%%%%%%%%%%%%%%%%%%%%%%%%%%%%%%%%%%%%%%%%%%
\begin{theo}
  \label{t:discreteFUP}
Let $\mathcal C_k\subset \mathbb Z_N$, $N=M^k$, be the Cantor set from~\eqref{e:discrete-cantor}
for some choice of $M,\mathcal A$. Define the unitary discrete Fourier transform
\begin{equation}
  \label{e:discrete-fourier}
\mathcal F_N:\mathbb C^N\to\mathbb C^N,\quad
\mathcal F_Nu(j)={1\over\sqrt N}\sum_{\ell=0}^{N-1}\exp\Big(-{2\pi i j\ell\over N}\Big)
u(\ell).
\end{equation}
Let $\delta$ be defined in~\eqref{e:delta-discrete} and assume that $0<\delta<1$.
Then there exist constants
\begin{equation}
  \label{e:fup-discrete-beta}
\beta>\max\Big(0,{1\over 2}-\delta\Big)
\end{equation}
and~$C$, both depending only on $M,\mathcal A$,
such that the set $\mathcal C_k$ satisfies the discrete uncertainty principle
\begin{equation}
  \label{e:fup-discrete}
\|\indic_{\mathcal C_k}\mathcal F_N\indic_{\mathcal C_k}\|_{\mathbb C^N\to\mathbb C^N}
\leq CN^{-\beta}.
\end{equation}
\end{theo}
%%%%%%%%%%%%%%%%%%%%%%%%%%%%%%%%%%%%%%%%%%%%%%%%%%%%%%%%%%%%%%%%%%%%%%%%%%%%%%%%
\begin{figure}
\includegraphics[scale=0.5125]{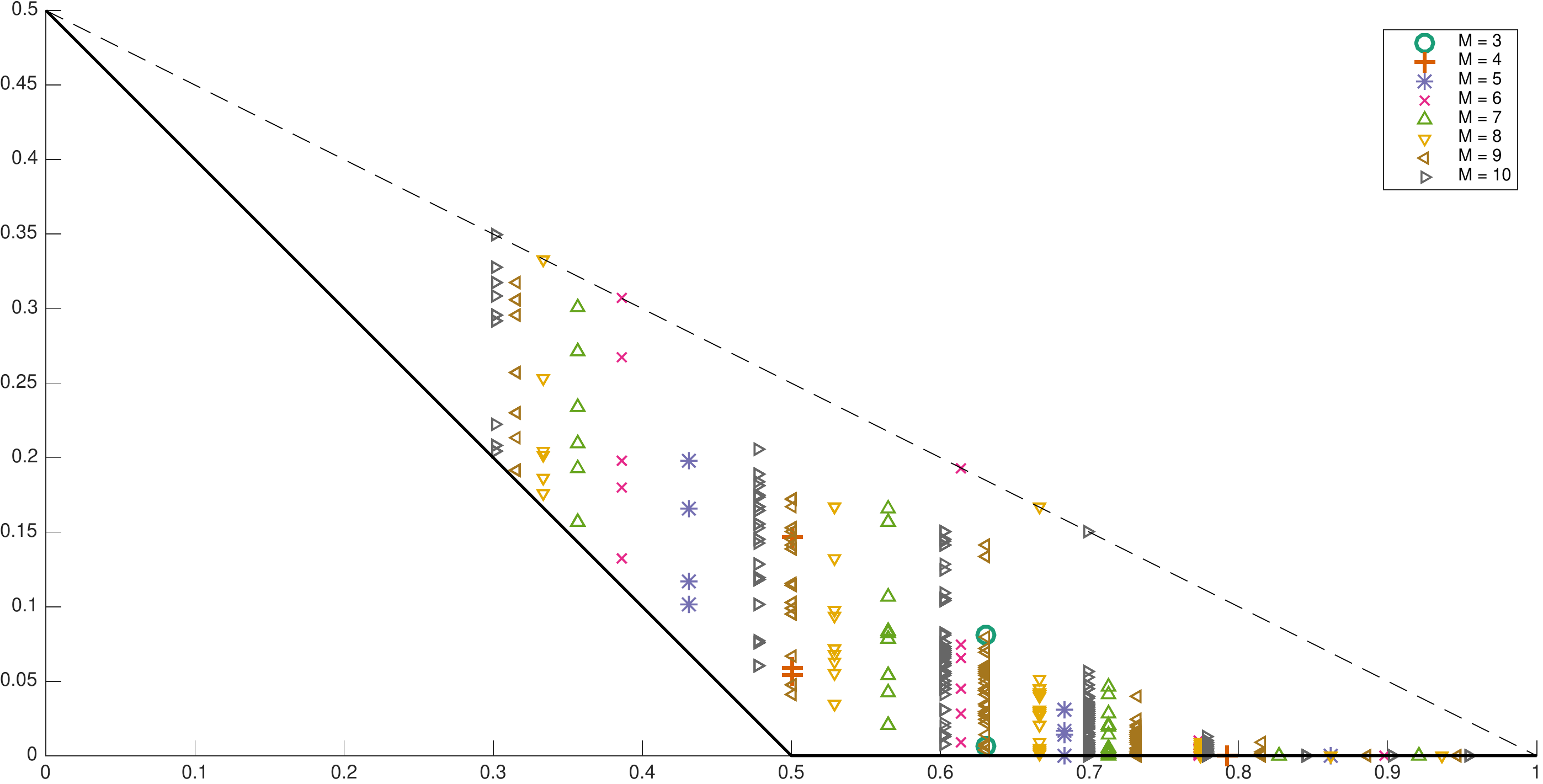}
\caption{Numerically approximated fractal uncertainty exponents for all possible
alphabets with $M\leq 10$ and $0<\delta<1$. Here the horizontal axis represents $\delta$
and the vertical axis represents $\beta$.
The solid black line is $\beta=\max(0,{1\over 2}-\delta)$
and the dashed line is $\beta={1-\delta\over 2}$.
See~\cite[Figure~3]{oqm} for details.}
\label{f:fupcloud}
\end{figure}
%%%%%%%%%%%%%%%%%%%%%%%%%%%%%%%%%%%%%%%%%%%%%%%%%%%%%%%%%%%%%%%%%%%%%%%%%%%%%%%%
\begin{rema}
The discrete uncertainty principle~\eqref{e:fup-discrete}
is related to the continuous uncertainty principle~\eqref{e:fup}
for $X=Y=\mathcal C_\infty+[0,h]$ and $h=(2\pi N)^{-1}$,
see~\cite[Proposition~5.8]{regfup}.
\end{rema}
%%%%%%%%%%%%%%%%%%%%%%%%%%%%%%%%%%%%%%%%%%%%%%%%%%%%%%%%%%%%%%%%%%%%%%%%%%%%%%%%
\begin{rema}
\label{r:fup-discrete-easy}
It is easy to see that~\eqref{e:fup-discrete} holds
with $C=1$ and $\beta=\max(0,{1\over 2}-\delta)$. Indeed,
since $\mathcal F_N$ is unitary, the left-hand side of~\eqref{e:fup-discrete}
is bounded above by~1. On the other hand, denoting by $\|\bullet\|_{\HS}$
the Hilbert--Schmidt norm, we have
\begin{equation}
  \label{e:HS}
\|\indic_{\mathcal C_k}\mathcal F_N\indic_{\mathcal C_k}\|_{\mathbb C^N\to\mathbb C^N}
\leq \|\indic_{\mathcal C_k}\mathcal F_N\indic_{\mathcal C_k}\|_{\HS}
=N^{\delta-1/2}.
\end{equation}
\end{rema}
%%%%%%%%%%%%%%%%%%%%%%%%%%%%%%%%%%%%%%%%%%%%%%%%%%%%%%%%%%%%%%%%%%%%%%%%%%%%%%%%

A natural question to ask is the dependence of the 
largest exponent $\beta$ for which~\eqref{e:fup-discrete}
holds on the alphabet $\mathcal A$. 
This dependence can be quite complicated,
see Figure~\ref{f:fupcloud}. There exist various lower and upper bounds
on $\beta$ depending on $M,\delta$, see~\cite[\S3]{oqm}.
In particular, for each $\delta\in (0,{1\over 2}]$ the improvement
in~\eqref{e:fup-discrete-beta} may be arbitrarily small, namely
there exists
a sequence $(M_j,\mathcal A_j)$ such that the corresponding dimensions
$\delta_j$ converge to $\delta$ and the FUP exponents $\beta_j$ converge
to ${1\over 2}-\delta$~-- see~\cite[Proposition~3.17]{oqm}. For $\delta>{1\over 2}$ numerics suggest
that $\beta$ could be exponentially small in $M$,
supporting the following

%%%%%%%%%%%%%%%%%%%%%%%%%%%%%%%%%%%%%%%%%%%%%%%%%%%%%%%%%%%%%%%%%%%%%%%%%%%%%%%%
\begin{conj}
  \label{c:improve-0}
Fix $\delta\in (1/2,1)$. Then there exists a sequence of pairs $(M_j,\mathcal A_j)$ such that
$$
\delta(M_j,\mathcal A_j)\to\delta,\quad
\beta(M_j,\mathcal A_j)\to 0.
$$
Here $\delta(M,\mathcal A)=\log_M |\mathcal A|$ and
$$
\beta(M,\mathcal A):=-\limsup_{k\to\infty}{\log \|\indic_{\mathcal C_k}\mathcal F_N\indic_{\mathcal C_k}\|_{\mathbb C^N\to\mathbb C^N}\over \log N},\quad
N:=M^k.
$$
\end{conj}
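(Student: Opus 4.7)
The plan is to construct an explicit sequence of alphabets $(M_j, \mathcal{A}_j)$ in which $\mathcal{A}_j$ has strong arithmetic structure, so that the resulting discrete Cantor set $\mathcal{C}_k$ is close to an arithmetic progression or union of long intervals. Given $\delta \in (1/2, 1)$, a natural first attempt is to take $M_j \to \infty$ and set $\mathcal{A}_j := \{0, 1, \dots, L_j - 1\}$ with $L_j := \lceil M_j^\delta \rceil$; then $\delta(M_j, \mathcal{A}_j) = \log_{M_j} L_j \to \delta$, and the hope is that the corresponding Cantor set is ``fractally degenerate'' enough to force $\beta(M_j, \mathcal{A}_j) \to 0$.

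To bound $\beta(M_j, \mathcal{A}_j)$ from above I would exploit the product factorization
\begin{equation*}
\mathcal{F}_N \indic_{\mathcal{C}_k}(j) = N^{-1/2} \prod_{s=1}^k \phi(j/M^s),
\qquad \phi(\xi) := \sum_{a \in \mathcal{A}} e^{-2\pi i a \xi},
\end{equation*}
and test the operator against the normalized vector $u := |\mathcal{C}_k|^{-1/2} \indic_{\mathcal{C}_k}$. This reduces the problem to a lower bound on $\sum_{j \in \mathcal{C}_k} \prod_{s=1}^k |\phi(j/M^s)|^2$. For $\mathcal{A} = \{0, \dots, L-1\}$ the factor $\phi$ is a Fej\'er-type kernel of size $\geq L/2$ on an interval of width $\sim 1/L$ around each integer, so restricting $j$ to those elements of $\mathcal{C}_k$ whose leading base-$M$ digits vanish makes each factor in the product simultaneously large. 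Balancing the number of such ``resonant'' $j$ against the resulting size of the product, by tuning the relative growth of $M_j$ and $L_j$, should yield an $\ell^2$ lower bound of the form $N^{-\varepsilon_j}$ with $\varepsilon_j \to 0$.

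The main obstacle I expect is that the crudest version of this argument (retaining only the $j = 0$ term in the sum) already recovers only the Jakobson--Naud-type bound $\beta \leq (1-\delta)/2$, which does not tend to $0$ for fixed $\delta < 1$. To push the upper bound on $\beta$ all the way to $0$ one must either (i) work with a more refined alphabet, perhaps a union of arithmetic progressions carefully aligned with the multiplicative structure of $M_j$, or (ii) produce a sharper test vector that exploits constructive interference of $\phi(j/M^s)$ over a large subfamily of $j \in \mathcal{C}_k$ simultaneously, not just the single point $j = 0$. Equivalently, one can reformulate $\beta(M, \mathcal{A})$ as the logarithmic spectral radius of a suitable transfer operator and try to show that for a well-chosen sequence this radius converges to the trivial bound $|\mathcal{A}| M^{-1/2}$; the difficulty is that the oscillatory Fourier phase generically produces cancellations which force the spectral radius strictly below this bound, so the real challenge is to identify a sequence of alphabets for which these cancellations degenerate in the large-$M$ limit.
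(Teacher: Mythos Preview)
This statement is presented in the paper as an open \textbf{conjecture}, not a theorem: the paper gives no proof and explicitly frames it as a problem supported only by numerical evidence (the sentence introducing it reads ``numerics suggest that $\beta$ could be exponentially small in $M$, supporting the following'' conjecture). There is therefore no proof in the paper against which to compare your attempt.

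Your own diagnosis of the obstacle is accurate and in fact explains why the problem is genuinely hard. The single test vector $u=|\mathcal C_k|^{-1/2}\indic_{\mathcal C_k}$ (equivalently, a bump on one $h$-interval) can never do better than $\beta\leq (1-\delta)/2$: this upper bound holds for \emph{every} $\delta$-regular pair, as the paper notes in its discussion of the Jakobson--Naud conjecture, and in particular it holds for your interval alphabets $\mathcal A_j=\{0,\dots,L_j-1\}$. Since $(1-\delta)/2>0$ for fixed $\delta<1$, this line of argument cannot force $\beta(M_j,\mathcal A_j)\to 0$. To prove the conjecture one would need, for some special sequence of alphabets, an approximate top singular vector of $\indic_{\mathcal C_k}\mathcal F_N\indic_{\mathcal C_k}$ whose mass is spread over $\mathcal C_k$ in a way that makes the Fourier phases align constructively across many scales simultaneously---something neither your product-kernel heuristic nor the transfer-operator reformulation you sketch actually delivers. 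Your directions (i) and (ii) are reasonable avenues, but as written the proposal is a description of the difficulty rather than a resolution of it; the conjecture remains open.
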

%%%%%%%%%%%%%%%%%%%%%%%%%%%%%%%%%%%%%%%%%%%%%%%%%%%%%%%%%%%%%%%%%%%%%%%%%%%%%%%%
As follows from the above discussion and illustrated by Figure~\ref{f:fupcloud},
we expect that $\beta-\max(0,{1\over 2}-\delta)$ may be very small for some choice
of $M,\mathcal A$. However, the following conjecture states
that if we dilate one of the sets $\mathcal C_k$ by a generic factor, then
FUP holds with a larger value of $\beta$, depending only on the dimension~$\delta$:
%%%%%%%%%%%%%%%%%%%%%%%%%%%%%%%%%%%%%%%%%%%%%%%%%%%%%%%%%%%%%%%%%%%%%%%%%%%%%%%%
\begin{conj}
\label{c:genericdil}
Fix $M,\mathcal A$ with $0<\delta<1$, take $\alpha\in [1,M]$, and consider the dilated
Fourier transform
\begin{equation}
  \label{e:F-N-alpha}
\mathcal F_{N,\alpha}:\mathbb C^N\to\mathbb C^N,\quad
\mathcal F_{N,\alpha}u(j)={1\over\sqrt N}\sum_{\ell=0}^{N-1} \exp\Big(-{2\pi i\alpha j\ell\over N}\Big)u(\ell).
\end{equation}
Show that there exists $\beta>\max(0,{1\over 2}-\delta)$ \textbf{depending only on $\delta$}
such that for a \textbf{generic} choice of $\alpha\in [1,M]$
we have as $k\to\infty$
\begin{equation}
  \label{e:genericdil}
\|\indic_{\mathcal C_k}\mathcal F_{N,\alpha}\indic_{\mathcal C_k}\|_{\mathbb C^N\to\mathbb C^N}=\mathcal O(N^{-\beta}),\quad
N:=M^k.
\end{equation}
\end{conj}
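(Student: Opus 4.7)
My plan is to reduce the discrete statement to a continuous FUP for the pair $(\mathcal C_\infty, \alpha \mathcal C_\infty)$ and then replace the dependence on the regularity constant $C_R$ by a dimension-only dependence by exploiting Fourier decay of the dilated limiting Cantor measure. First, the Remark immediately following Theorem~\ref{t:discreteFUP} shows that~\eqref{e:genericdil} is essentially equivalent to a continuous bound $\|\indic_{X}\mathcal F_h\indic_{Y_\alpha}\|_{L^2\to L^2}=\mathcal O(h^\beta)$ with $h\sim N^{-1}$, $X=\mathcal C_\infty+[0,h]$, and $Y_\alpha=\alpha(\mathcal C_\infty+[0,h])$. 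Both $X$ and $Y_\alpha$ are $\delta$-regular on scales $h$ to~$1$ with constants depending on $M,\mathcal A$ (but not on $h$), since dilation by a bounded $\alpha$ preserves $\delta$-regularity up to a constant. Hence Theorem~\ref{t:fup-pres} already yields an exponent $\beta(\delta,C_R)>\max(0,\tfrac12-\delta)$ for every $\alpha$, and the content of the conjecture is to upgrade this to an exponent depending on~$\delta$ alone for generic~$\alpha$.

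The core idea is to use that the Cantor measure $\mu$ on $\mathcal C_\infty$ factorizes as $\widehat\mu(\xi)=\prod_{j\ge 0}\phi_{\mathcal A}(M^{-j}\xi)$ with $\phi_{\mathcal A}(t)=|\mathcal A|^{-1}\sum_{a\in\mathcal A}e^{-2\pi iat}$, and to prove that $|\widehat\mu(\alpha\xi)|\le C_\alpha(1+|\xi|)^{-\eta}$ for generic $\alpha$ with $\eta=\eta(\delta)>0$. Combined with the general principle reviewed in~\S\ref{s:relator} (and used for Schottky limit sets in Theorem~\ref{t:fdec-used}) that Fourier decay of one of the two fractal measures implies an FUP improvement depending only on $\delta$ and the decay rate, this would yield the desired $\beta=\beta(\delta)$. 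Thus the whole problem reduces to establishing generic Fourier decay of $\mu$ under dilations.

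For the Fourier-decay step I would combine a Davenport--Erd\H{o}s style factorization with a large-deviation argument: for almost every $\alpha$ the orbit $\{\alpha M^{-j}\xi\bmod 1\}_j$ equidistributes well enough that a positive proportion of the factors $\phi_{\mathcal A}(\alpha M^{-j}\xi)$ are bounded in modulus by $1-c(\delta)$ for a constant depending only on~$\delta$. An alternative is to view $\alpha\mapsto\widehat\mu(\alpha\xi)$ as a one-parameter family of linear projections of the product measure $\mu\otimes\mu$ and invoke a Shmerkin/Mosquera--Shmerkin type transversality argument for self-similar measures. Either approach should produce a full-measure set of admissible $\alpha$'s.

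The main obstacle is extracting a decay exponent $\eta$ that depends \emph{only} on~$\delta$, not on $M$ or $\mathcal A$. The examples in~\cite{oqm} underlying Conjecture~\ref{c:improve-0} show that without dilation one cannot hope for such uniformity; the task is to quantify how much a generic $\alpha$ destroys the arithmetic conspiracy, uniformly across all $(M,\mathcal A)$ with a given~$\delta$. Existing projection-theorem methods typically lose control of the exponent as $M\to\infty$ or as the alphabet becomes multiplicatively special, so a substantially new quantitative transversality input, sensitive only to the dimension, seems required. A secondary point is to pin down the precise notion of genericity (full measure vs.\ a residual set, or a set of full Hausdorff dimension), which affects the admissible toolbox.
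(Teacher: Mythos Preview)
This statement is labeled a \emph{Conjecture} in the paper and is presented as an open problem; the paper gives no proof. It only remarks that an exponent depending on $M,\mathcal A$ follows from Theorems~\ref{t:fup-0}--\ref{t:fup-pres}, suggests one possible line of attack via the Schur-type bound~\eqref{e:gendale}, and offers numerical evidence (Figures~\ref{f:genericdil0}--\ref{f:genericdil}). So there is nothing to compare your argument to; the question is whether your outline could in principle work.

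Your central step cannot work as stated. You propose to show that for generic $\alpha$ the dilated Cantor measure has polynomial Fourier decay, i.e.\ $|\widehat\mu(\alpha\xi)|\le C_\alpha(1+|\xi|)^{-\eta}$. But the function $\xi\mapsto\widehat\mu(\alpha\xi)$ is just $\widehat\mu$ composed with the linear reparametrization $\xi\mapsto\alpha\xi$; polynomial decay of $\xi\mapsto\widehat\mu(\alpha\xi)$ is therefore \emph{equivalent} to polynomial decay of $\widehat\mu$ itself, for every nonzero $\alpha$. For integer-base self-similar Cantor measures this is known to fail: as the paper recalls just before Theorem~\ref{t:fdec}, already the middle-third Cantor measure satisfies $|\widehat\mu(2\pi\cdot 3^m)|\sim 1$, so $\widehat\mu$ does not even tend to~$0$. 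Dilating by $\alpha$ simply relabels the bad frequencies as $2\pi\cdot 3^m/\alpha$; it does not remove them. Your equidistribution heuristic about the orbit $\{\alpha M^{-j}\xi\bmod 1\}$ breaks down because for each fixed $\xi$ only $O(\log_M|\xi|)$ factors in the product are nontrivial, and one is free to choose $\xi$ adversarially after $\alpha$ is fixed. The Mosquera--Shmerkin type results you invoke concern varying the \emph{contraction ratio} of the IFS, not dilating a fixed integer-base Cantor measure, and do not transfer here.

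Consequently the reduction to \S\ref{s:fdec} collapses: Proposition~\ref{l:fourier-dec-fup} needs a genuine Fourier decay bound on one of the sets, and no dilation of $\mathcal C_\infty$ provides one. The paper's own suggestion via~\eqref{e:gendale} avoids pointwise Fourier decay altogether, instead seeking control of an averaged quantity $\sum_{\ell\in\mathcal C_k}|F_{k,\alpha}(j-\ell)|$; this exploits cancellation between $\alpha$ and the discrete structure of $\mathcal C_k$ in a way that survives the failure of $\widehat\mu\to 0$. If you want to pursue a Fourier-analytic route, that averaged object (or an $L^4$/additive-energy variant as in \S\ref{s:relator}) is the natural replacement for the pointwise decay you assumed.
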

%%%%%%%%%%%%%%%%%%%%%%%%%%%%%%%%%%%%%%%%%%%%%%%%%%%%%%%%%%%%%%%%%%%%%%%%%%%%%%%%
We note that existence of $\beta$ depending on $M,\mathcal A$
follows from the general FUP in Theorems~\ref{t:fup-0}--\ref{t:fup-pres}.
Note also that while we do not in general have $\|\mathcal F_{N,\alpha}\|\leq 1$,
by applying Schur's Lemma to the matrix $\mathcal F_{N,\alpha}^*\mathcal F_{N,\alpha}$
we see that $\|\mathcal F_{N,\alpha}\|\leq C\sqrt{\log N}$.

If true (with a sufficiently good understanding of what it means for $\alpha$ to be generic),
Conjecture~\ref{c:genericdil} is likely to give a spectral gap depending only on $\delta$
for an open quantum baker's map with generic size of the matrix.
We refer the reader to~\cite[\S5]{regfup} for details.
More precisely, if the size of the open quantum baker's map matrix
is given by $\alpha M^k$ where $1\leq\alpha\leq M$, then
the left-hand side of~\cite[(5.8)]{regfup}
is the norm of the matrix
\begin{equation}
  \label{e:matlator}
\bigg({1\over \sqrt{\alpha M^k}}\exp\Big(-{2\pi ib_jb_\ell\over \alpha M^k}\Big)\bigg)_{j,\ell\in\mathcal C_k}
\end{equation}
where $b_j$ is an integer chosen arbitrarily in the interval $\big[\alpha j,\alpha(j+1)\big)$.
If we forget about the requirement that $b_j$ be an integer, then we may take
$b_j:=\alpha j$, in which case the matrix~\eqref{e:matlator} has entries
$\big({1\over\sqrt {\alpha M^k}}e^{-2\pi i \alpha j\ell /M^k}\big)_{j,\ell\in\mathcal C_k}$,
and its norm equals the left-hand side of~\eqref{e:genericdil} up to the constant factor
$\sqrt\alpha$.

One possible approach to Conjecture~\ref{c:genericdil} would be to use
the following corollary of Schur's Lemma (see the proof of~\cite[Lemma~3.8]{oqm})
\begin{equation}
  \label{e:gendale}
\begin{gathered}
\|\indic_{\mathcal C_k}\mathcal F_{N,\alpha}\indic_{\mathcal C_k}\|_{\mathbb C^N\to\mathbb C^N}\leq \tilde r_{k,\alpha}\quad\text{where}\\
\tilde r_{k,\alpha}^2=\max_{j\in\mathcal C_k}\sum_{\ell\in\mathcal C_k}|F_{k,\alpha}(j-\ell)|,\quad
F_{k,\alpha}(j)={1\over N}\sum_{r\in\mathcal C_k}\exp\Big(-{2\pi i \alpha rj\over N}\Big).
\end{gathered}
\end{equation}
For numerical evidence in support of Conjecture~\ref{c:genericdil},
see Figures~\ref{f:genericdil0}--\ref{f:genericdil}.
%%%%%%%%%%%%%%%%%%%%%%%%%%%%%%%%%%%%%%%%%%%%%%%%%%%%%%%%%%%%%%%%%%%%%%%%%%%%%%%%
\begin{figure}
\includegraphics[width=15cm]{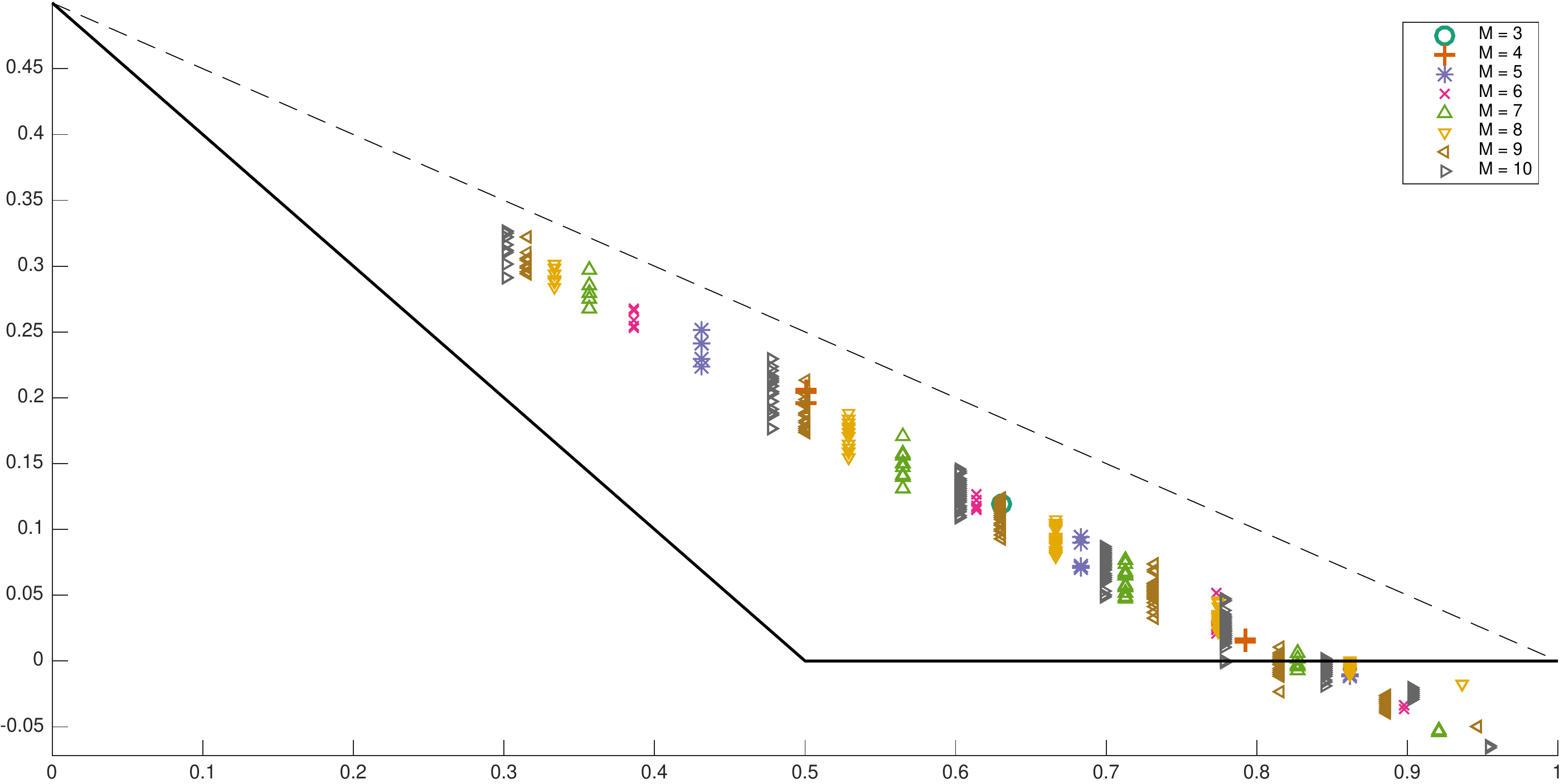}
\caption{Numerically approximated fractal uncertainty exponents for all possible
alphabets with $M\leq 10$ and $0<\delta<1$, using the dilated Fourier transform $\mathcal F_{N,\alpha}$ from~\eqref{e:F-N-alpha} in place of $\mathcal F_N$,
with $\alpha:=(1+\sqrt 5)/2$, taking $k$ large
and assuming constant $1$ in the $\mathcal O(\bullet)$ in~\eqref{e:genericdil}.
We see that the worst FUP exponents are generally larger than
those on Figure~\ref{f:fupcloud}, supporting Conjecture~\ref{c:genericdil}.
This data has to be interpreted with more caution than the one on Figure~\ref{f:fupcloud}.
In particular, some points have $\beta<0$, because the operator $\mathcal F_{N,\alpha}$
no longer has norm bounded by~1.
}
\label{f:genericdil0}
\end{figure}
%%%%%%%%%%%%%%%%%%%%%%%%%%%%%%%%%%%%%%%%%%%%%%%%%%%%%%%%%%%%%%%%%%%%%%%%%%%%%%%%

%%%%%%%%%%%%%%%%%%%%%%%%%%%%%%%%%%%%%%%%%%%%%%%%%%%%%%%%%%%%%%%%%%%%%%%%%%%%%%%%
\begin{figure}
\includegraphics[width=15cm]{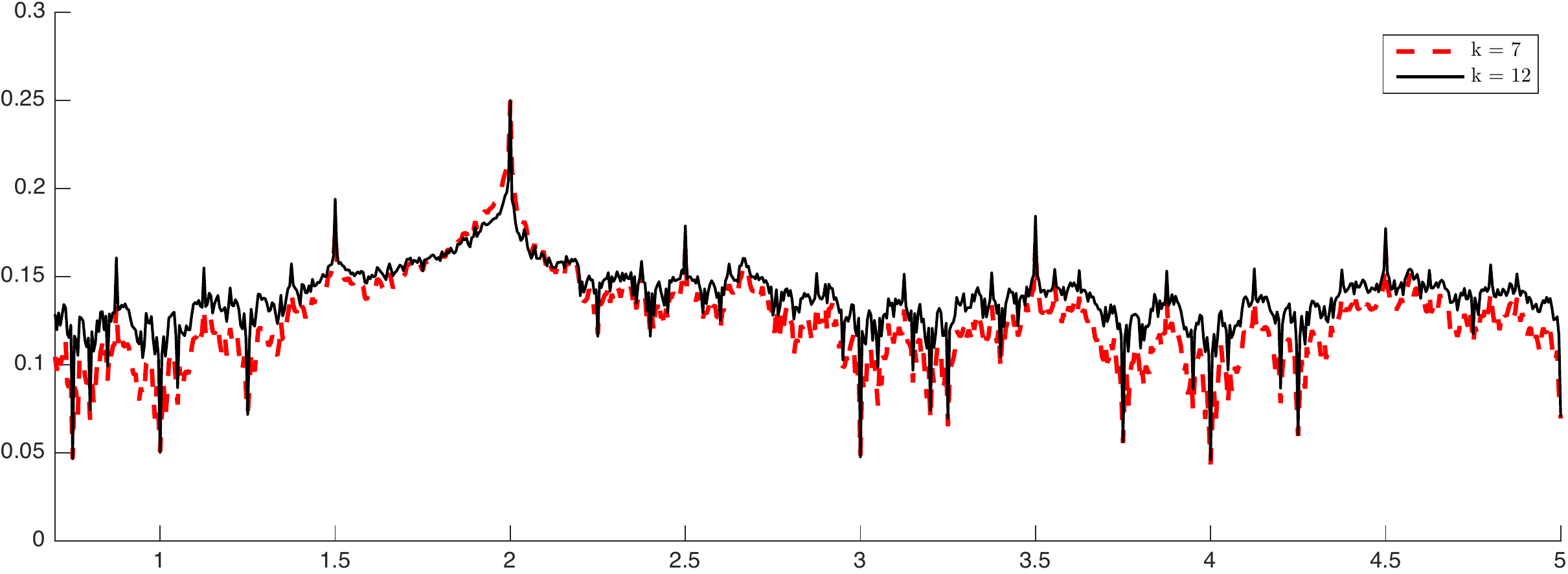}
\caption{Graphs of the quantities $\beta_{k,\alpha}:=-\log \tilde r_{k,\alpha}/\log N$ as functions of~$\alpha$ for two values of~$k$ where
$\tilde r_{k,\alpha}$ is defined in~\eqref{e:gendale}.
The parameters of the Cantor set are $M=4$, $\mathcal A=\{0,1\}$,
and in this case $|F_{k,\alpha}(j)|=2^{-k}\prod_{q=1}^k|\cos(4^{-q}\pi\alpha j)|$. We note the
dips at $\alpha=1,3,4$, with $\beta_{k,\alpha}$ being much larger for generic $\alpha$
than at the dips. For $\alpha=2$ we have $\beta_{k,\alpha}={1\over 4}$
due to the choice of the alphabet, this is similar to the special
alphabets considered in~\cite[\S3.5]{oqm}.
}
\label{f:genericdil}
\end{figure}
%%%%%%%%%%%%%%%%%%%%%%%%%%%%%%%%%%%%%%%%%%%%%%%%%%%%%%%%%%%%%%%%%%%%%%%%%%%%%%%%

%%%%%%%%%%%%%%%%%%%%%%%%%%%%%%%%%%%%%%%%%%%%%%%%%%%%%%%%%%%%%%%%%%%%%%%%%%%%%%%%
\subsection{Proof of discrete FUP}
  \label{s:fup-cantor-proof}

We now give a proof of Theorem~\ref{t:discreteFUP}, following~\cite[\S3]{oqm}.
Compared to the general case the proof is greatly simplified by the following
submultiplicative property which
uses the special structure of Cantor sets:
%%%%%%%%%%%%%%%%%%%%%%%%%%%%%%%%%%%%%%%%%%%%%%%%%%%%%%%%%%%%%%%%%%%%%%%%%%%%%%%%
\begin{lemm}
  \label{l:submultiplicativity}
Put
$$
r_k:=\|\indic_{\mathcal C_k}\mathcal F_N\indic_{\mathcal C_k}\|_{\mathbb C^N\to\mathbb C^N},\quad
N:=M^k.
$$
Then for all $k_1,k_2$ we have
$$
r_{k_1+k_2}\leq r_{k_1}\cdot r_{k_2}.
$$
\end{lemm}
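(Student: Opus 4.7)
The plan is to exploit a Cooley--Tukey-style factorization of $\mathcal F_N$ adapted to $N=N_1N_2$ with $N_j:=M^{k_j}$, together with the product structure of $\mathcal C_{k_1+k_2}$ inherited from base-$M$ digits.

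First I would introduce asymmetric coordinates on $\mathbb Z_N$ for the input and output sides of $A_{k_1+k_2}:=\indic_{\mathcal C_{k_1+k_2}}\mathcal F_N\indic_{\mathcal C_{k_1+k_2}}$: write the output index as $j=J_2+M^{k_2}J_1$ with $J_1\in[0,N_1)$, $J_2\in[0,N_2)$, and the input index as $\ell=L_1+M^{k_1}L_2$ with $L_1\in[0,N_1)$, $L_2\in[0,N_2)$. A direct inspection of digits shows that both decompositions respect the Cantor structure: $j\in\mathcal C_{k_1+k_2}$ iff $J_1\in\mathcal C_{k_1}$ and $J_2\in\mathcal C_{k_2}$, and analogously for $\ell$. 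Consequently $\indic_{\mathcal C_{k_1+k_2}}$ factors as a product of two one-dimensional indicators in each of these coordinate systems.

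Next I would expand the Fourier phase. A short calculation yields
\[
\frac{j\ell}{N}\equiv\frac{J_1L_1}{N_1}+\frac{J_2L_2}{N_2}+\frac{J_2L_1}{N}\pmod{\mathbb Z},
\]
because the would-be fourth cross-term $J_1L_2$ turns into an integer. Combining this with the factored indicators, the matrix entry of $A_{k_1+k_2}$ becomes
\[
(A_{k_1+k_2})_{(J_1,J_2),(L_2,L_1)} = (A_{k_1})_{J_1,L_1}\cdot e^{-2\pi iJ_2L_1/N}\cdot (A_{k_2})_{J_2,L_2},
\]
where $A_{k_j}:=\indic_{\mathcal C_{k_j}}\mathcal F_{N_j}\indic_{\mathcal C_{k_j}}$ is the smaller operator whose norm is $r_{k_j}$.

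Finally I would read this identity as an operator factorization $A_{k_1+k_2}=\mathcal A_1\,T\,\mathcal A_2$ on $\mathbb C^N\cong\mathbb C^{N_2}\otimes\mathbb C^{N_1}$: first $\mathcal A_2$ applies $A_{k_2}$ in the $L_2$ coordinate while acting as the identity in $L_1$, then $T$ multiplies pointwise by the unimodular weight $e^{-2\pi iJ_2L_1/N}$, and finally $\mathcal A_1$ applies $A_{k_1}$ in the $L_1$ coordinate while acting as the identity in $J_2$. Tensoring with an identity preserves operator norms, so $\|\mathcal A_1\|=r_{k_1}$, $\|\mathcal A_2\|=r_{k_2}$, and $\|T\|=1$, which gives $r_{k_1+k_2}\le r_{k_1}r_{k_2}$.

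The one place that needs care is the choice of \emph{asymmetric} decompositions on input versus output. A symmetric splitting (e.g.\ $j=j_1+N_1j_2$ and $\ell=\ell_1+N_1\ell_2$) would leave a residual cross-term whose phase is not aligned with $N_1$ or $N_2$, producing an off-grid twisted Fourier transform in place of a pure unimodular multiplier, and no clean submultiplicative bound would follow. With the asymmetric splitting above, the cross-term becomes a pure phase and the rest is a routine norm factorization.
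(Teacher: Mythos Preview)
Your proof is correct and follows essentially the same approach as the paper: both use the Cooley--Tukey/FFT factorization with the asymmetric base-$M$ digit splitting (low $k_1$ digits on the input side, low $k_2$ digits on the output side), which kills the unwanted cross-term and leaves the three-step decomposition ``$A_{k_2}$ in one slot, unimodular twist, $A_{k_1}$ in the other slot''. The paper phrases the same computation in terms of arranging $u$ and $v=\mathcal G_k u$ into $|\mathcal A|^{k_1}\times|\mathcal A|^{k_2}$ matrices with the Hilbert--Schmidt norm, but the content and the norm estimates are identical to your tensor-with-identity argument.
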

%%%%%%%%%%%%%%%%%%%%%%%%%%%%%%%%%%%%%%%%%%%%%%%%%%%%%%%%%%%%%%%%%%%%%%%%%%%%%%%%
\begin{proof}
Denote
$$
k:=k_1+k_2,\quad
N_j:=M^{k_j},\quad
N:=M^k.
$$
We define the space
$$
\ell^2(\mathcal C_k)=\{u\in \mathbb C^N\mid \supp u\subset \mathcal C_k\}.
$$
Then $r_k$ is the norm of the operator
$$
\mathcal G_k:\ell^2(\mathcal C_k)\to\ell^2(\mathcal C_k),\quad
\mathcal G_k u=\indic_{\mathcal C_k}\mathcal F_Nu.
$$
We will write $\mathcal G_k$ in terms of $\mathcal G_{k_1},\mathcal G_{k_2}$
using a procedure similar to the one used in the Fast Fourier Transform (FFT) algorithm.
Take
$$
u\in\ell^2(\mathcal C_k),\quad
v:=\mathcal G_k u.
$$
We associate to $u,v$ the $|\mathcal A|^{k_1}\times |\mathcal A|^{k_2}$
matrices $U,V$ defined as follows:
$$
\begin{aligned}
U_{ab}&=u(N_1\cdot b + a)\\
V_{ab}&=v(N_2\cdot a + b)
\end{aligned}
$$
for all $a\in\mathcal C_{k_1}$ and $b\in\mathcal C_{k_2}$.
Here we use the fact that
$$
\mathcal C_k=N_2\cdot \mathcal C_{k_1}+\mathcal C_{k_2}
=N_1\cdot \mathcal C_{k_2}+\mathcal C_{k_1}.
$$
Note that the norms of $u,v$ are equal to the Hilbert--Schmidt norms of $U,V$:
$$
\|u\|^2=\sum_{a,b}|U_{ab}|^2,\quad
\|v\|^2=\sum_{a,b}|V_{ab}|^2.
$$
We now write the identity $v=\mathcal G_k u$ in terms of
the matrices $U,V$:
$$
V_{ab}={1\over\sqrt N}\sum_{p\in\mathcal C_{k_1}\atop q\in\mathcal C_{k_2}}
\exp\Big(-{2\pi i (N_2\cdot a+b)(N_1\cdot q+p)\over N}\Big)U_{pq}.
$$
Here is where a small miracle happens: the product of $N_2\cdot a$
and $N_1\cdot q$ is divisible by~$N$, so it can be removed from the exponential.
That is,
$$
V_{ab}={1\over\sqrt N}\sum_{p,q}
\exp\Big(-{2\pi i ap\over N_1}\Big)
\exp\Big(-{2\pi ibp\over N}\Big)
\exp\Big(-{2\pi ibq\over N_2}\Big)
U_{pq}.
$$
It follows that the matrix $V$ can be obtained from $U$ in the following
three steps:
\begin{enumerate}
\item Replace each row of $U$ by its Fourier transform $\mathcal G_{k_2}$,
obtaining the matrix
$$
U'_{pb}={1\over\sqrt{N_2}}\sum_{q\in\mathcal C_{k_2}} \exp\Big(-{2\pi i bq\over N_2}\Big)U_{pq}.
$$
\item Multiply the entries of $U'$ by twist factors, obtaining
the matrix
$$
V'_{pb}=\exp\Big(-{2\pi ibp\over N}\Big)U'_{pb}.
$$
\item Replace each column of $V'$ by its Fourier transform $\mathcal G_{k_1}$,
obtaining the matrix
$$
V_{ab}={1\over\sqrt{N_1}}\sum_{p\in\mathcal C_{k_1}} \exp\Big(-{2\pi iap\over N_1}\Big)V'_{pb}.
$$
\end{enumerate}
Now, we have
$$
\|U'\|_{\HS}\leq r_{k_2}\|U\|_{\HS},\quad
\|V'\|_{\HS}=\|U'\|_{\HS},\quad
\|V\|_{\HS}\leq r_{k_1}\|V'\|_{\HS},
$$
giving
$$
\|v\|\leq r_{k_1}\cdot r_{k_2}\cdot \|u\|
$$
which finishes the proof.
\end{proof}
%%%%%%%%%%%%%%%%%%%%%%%%%%%%%%%%%%%%%%%%%%%%%%%%%%%%%%%%%%%%%%%%%%%%%%%%%%%%%%%%
Given Lemma~\ref{l:submultiplicativity}, we see by Fekete's Lemma that
\begin{equation}
  \label{e:betabest}
\lim_{k\to\infty}{\log r_k\over k\log M}
=\inf_{k\geq 1}{\log r_k\over k\log M}.
\end{equation}
Thus to prove Theorem~\ref{t:discreteFUP} it suffices
to obtain the strict inequality
\begin{equation}
  \label{e:discrete-improve}
r_k:=\|\indic_{\mathcal C_k}\mathcal F_N\indic_{\mathcal C_k}\|_{\mathbb C^N\to\mathbb C^N}
<\min(1,N^{\delta-1/2})
\end{equation}
for just \emph{one} value of $k$. 

The inequality~\eqref{e:discrete-improve} consists of two parts, proved below:
%%%%%%%%%%%%%%%%%%%%%%%%%%%%%%%%%%%%%%%%%%%%%%%%%%%%%%%%%%%%%%%%%%%%%%%%%%%%%%%%
\begin{lemm}
  \label{l:improve-0}
There exists $k$ such that $r_k<1$.  
\end{lemm}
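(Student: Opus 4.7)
The first step is to reformulate $r_k < 1$ in a cleaner form. Since $\mathcal{F}_N$ is unitary, for any $u \in \ell^2(\mathcal{C}_k)$ one has $\|\indic_{\mathcal{C}_k}\mathcal{F}_N u\| \leq \|\mathcal{F}_N u\| = \|u\|$, with equality if and only if $\mathcal{F}_N u$ is also supported on $\mathcal{C}_k$. A compactness argument in the finite-dimensional unit sphere of $\ell^2(\mathcal{C}_k)$ then shows that $r_k = 1$ if and only if there exists a nonzero $u \in \mathbb{C}^N$ with $\supp u \subset \mathcal{C}_k$ \emph{and} $\supp \mathcal{F}_N u \subset \mathcal{C}_k$. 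The task is thus to exhibit a single $k$ for which no such vector exists.

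For the regime $\delta \leq 1/2$, I would appeal to the classical Donoho--Stark uncertainty principle $|\supp u| \cdot |\supp \mathcal{F}_N u| \geq N$ for nonzero $u \in \mathbb{C}^N$. Since both supports sit inside $\mathcal{C}_k$ of size $N^\delta$, this forces $N^{2\delta} \geq N$, contradicting $\delta < 1/2$ and already giving $r_1 < 1$. At the borderline $\delta = 1/2$, the equality case of Donoho--Stark additionally requires $\supp u$ to be a coset of a subgroup of $\mathbb{Z}_{M^k}$; one then observes that the Cantor set $\mathcal{C}_k$ is never such a coset for $0 < \delta < 1$, since subgroups of $\mathbb{Z}_{M^k}$ are of the form $M^j \mathbb{Z}_{M^k}$ and their arithmetic structure is incompatible with the digit-wise structure of $\mathcal{C}_k$. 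This is the contribution of the quantitative sparsity $|\mathcal{C}_k| = N^\delta$.

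The genuine case is $\delta > 1/2$, where I would use the porosity of $\mathcal{C}_k$ (the existence of a missing digit $b \in \{0,\ldots,M-1\} \setminus \mathcal{A}$, guaranteed by $\delta < 1$) together with Lemma~\ref{l:submultiplicativity}. Arguing by contradiction, suppose $r_k = 1$ for every $k$. Tightness in the submultiplicative chain $r_{k_1+k_2} \leq r_{k_1} r_{k_2}$, tracked through the FFT-style decomposition in the proof of that lemma, forces the factored matrix $U$ to have each row proportional to a $\mathcal{G}_{k_2}$-extremizer, and each column of the twisted matrix $V'_{pb} = \exp(-2\pi i b p/N) \, U'_{pb}$ to be proportional to a $\mathcal{G}_{k_1}$-extremizer. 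Picking $b \in \mathcal{A}$ and $p_1 \neq p_2 \in \mathcal{A}$ so that $\exp(-2\pi i b(p_1-p_2)/N) \neq 1$, the twist phase varies nontrivially with $p$ and so cannot simultaneously respect the row- and column-alignment imposed on the extremizer subspace, yielding a contradiction for $k_1, k_2$ chosen large enough.

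The main obstacle is the $\delta > 1/2$ regime when the extremizer subspace is multi-dimensional (which arises as soon as $|\mathcal{A}| > M/2$, so that the single-scale equation $\hat u|_{\mathcal{C}_k^c} = 0$ is under-determined). In that case the twist phases have room to "hide" within the extra dimensions of the extremizer subspace at a single scale, and one must carry out a careful algebraic analysis of how the group of phases $\{\exp(-2\pi i bp/N)\}_{b,p \in \mathcal{A}}$ acts on the extremizer subspace across several scales. Combining this with the missing-digit porosity at each scale, one shows that the constraints cannot be simultaneously satisfied once $k$ is large, which is the technical heart of the argument.
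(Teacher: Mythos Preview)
Your reformulation of $r_k=1$ as the existence of a nonzero $u$ with both $\supp u$ and $\supp\mathcal F_Nu$ contained in $\mathcal C_k$ is correct, and the Donoho--Stark argument for $\delta<\tfrac12$ is clean (indeed simpler than what the paper does in that range, since it gives $r_1<1$ immediately).

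However, for $\delta>\tfrac12$ you have not given a proof. Your plan is to track equality through the FFT decomposition of Lemma~\ref{l:submultiplicativity} and argue that the twist factors $e^{-2\pi ibp/N}$ obstruct simultaneous extremality in rows and columns. But as you yourself note, once $|\mathcal A|>M/2$ the single-scale extremizer space $\{u:\supp u,\supp\mathcal F_Nu\subset\mathcal C_k\}$ is multi-dimensional, and a diagonal phase multiplication can certainly map one element of a multi-dimensional subspace to another. You assert that ``a careful algebraic analysis of how the group of phases acts on the extremizer subspace across several scales'' combined with porosity resolves this, and call it ``the technical heart of the argument'' --- but you do not carry it out. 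Nothing you have written explains why, say, the extremizer subspace could not be invariant under all the twist operators simultaneously. That is a genuine gap, not a routine detail.

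The paper avoids this difficulty entirely with a short polynomial argument that is uniform in $\delta\in(0,1)$. Given $u$ with $\supp u,\supp\mathcal F_Nu\subset\mathcal C_k$, set $p(z)=\sum_\ell u(\ell)z^\ell$; then $\mathcal F_Nu(j)=N^{-1/2}p(e^{-2\pi ij/N})$, so $p$ vanishes at $N-|\mathcal C_k|\geq M^k-(M-1)^k$ roots of unity. On the other hand, porosity (the missing digit $a\notin\mathcal A$) means $\mathbb Z_N\setminus\mathcal C_k$ contains $M^{k-1}$ consecutive integers; after a circular shift of $\mathcal C_k$ (which does not change $r_k$) one arranges $\deg p<(M-1)M^{k-1}$. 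For $k$ large, $M^k-(M-1)^k\geq(M-1)M^{k-1}$, so $p$ has more roots than its degree, forcing $u=0$. The two ingredients --- sparsity $|\mathcal C_k|\leq(M-1)^k$ and porosity giving a gap of length $M^{k-1}$ --- are exactly the ones you identified, but combined via root-counting rather than via tightness in submultiplicativity.
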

%%%%%%%%%%%%%%%%%%%%%%%%%%%%%%%%%%%%%%%%%%%%%%%%%%%%%%%%%%%%%%%%%%%%%%%%%%%%%%%%
\begin{proof}
Since $\mathcal F_N$ is unitary we have
$r_k\leq 1$. 
We argue by contradiction. Assume that $r_k=1$.
Then there exists
$$
u\in\mathbb C^N\setminus \{0\},\quad
\|\indic_{\mathcal C_k}\mathcal F_N\indic_{\mathcal C_k}u\|=\|u\|.
$$
This implies that
\begin{align}
  \label{e:i0-1}
\supp u&\subset \mathcal C_k,\\
  \label{e:i0-2}
\supp (\mathcal F_N u)&\subset\mathcal C_k.
\end{align}
We now use the fact that discrete Fourier transform evaluates polynomials at roots
of unity. Define the polynomial
$$
p(z):=\sum_{\ell=0}^{N-1} u(\ell)z^\ell.
$$
Then
$$
\mathcal F_Nu(j)={1\over\sqrt{N}}\,p(e^{-2\pi i j/N}).
$$
By~\eqref{e:i0-2} for each $j\in \mathbb Z_N\setminus \mathcal C_k$
we have $\mathcal F_Nu(j)=0$.
It follows that the number of roots of $p$ is bounded below by
(here we use that $\delta<1$)
$$
N-|\mathcal C_k|\geq M^k-(M-1)^k.
$$
On the other hand, the set $\mathbb Z_N\setminus\mathcal C_k$
contains $M^{k-1}$ consecutive numbers
(specifically $aM^{k-1},\dots,(a+1)M^{k-1}-1$ where
$a\in\mathbb Z_M\setminus\mathcal A$; this corresponds
to porosity). We shift $\mathcal C_k$ circularly
(which does not change the norm $r_k$) to map these numbers
to $(M-1)M^{k-1},\dots,M^k-1$. Then
the degree of $p$ is smaller than $(M-1)M^{k-1}$.

Now, for $k$ large enough we have
$$
M^k-(M-1)^k\geq (M-1)M^{k-1}.
$$
Then the number of roots of $p$ is larger than its degree, giving
a contradiction.
\end{proof}
%%%%%%%%%%%%%%%%%%%%%%%%%%%%%%%%%%%%%%%%%%%%%%%%%%%%%%%%%%%%%%%%%%%%%%%%%%%%%%%%

%%%%%%%%%%%%%%%%%%%%%%%%%%%%%%%%%%%%%%%%%%%%%%%%%%%%%%%%%%%%%%%%%%%%%%%%%%%%%%%%
\begin{lemm}
  \label{l:improve-1}
For $k\geq 2$ we have $r_k<N^{\delta-1/2}$.  
\end{lemm}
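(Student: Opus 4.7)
The plan is to exploit the gap between the Hilbert--Schmidt bound from Remark~\ref{r:fup-discrete-easy} and the operator norm. Recall that for any matrix $A$ with singular values $\sigma_1 \geq \sigma_2 \geq \cdots \geq 0$, we have $\|A\|_{\mathrm{op}}^2 = \sigma_1^2$ and $\|A\|_{\HS}^2 = \sum_j \sigma_j^2$, with equality $\|A\|_{\mathrm{op}} = \|A\|_{\HS}$ if and only if $A$ has rank at most~$1$. Since the Hilbert--Schmidt computation in~\eqref{e:HS} gives $\|\indic_{\mathcal C_k}\mathcal F_N\indic_{\mathcal C_k}\|_{\HS} = N^{\delta-1/2}$, it therefore suffices to show that the restricted matrix
$$
A = \bigl(N^{-1/2}\exp(-2\pi i j\ell/N)\bigr)_{j,\ell\in\mathcal C_k}
$$
has rank at least~$2$, which would force $\|A\|_{\mathrm{op}}^2 \leq \sigma_1^2 < \sigma_1^2+\sigma_2^2 \leq \|A\|_{\HS}^2$ and yield the strict inequality.

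To show $A$ has rank at least~$2$ it is enough to exhibit a nonvanishing $2\times 2$ minor. A direct computation gives, for any $j_1,j_2,\ell_1,\ell_2 \in \mathcal C_k$,
$$
A_{j_1\ell_1}A_{j_2\ell_2} - A_{j_1\ell_2}A_{j_2\ell_1}
= \tfrac{1}{N}e^{-2\pi i(j_1\ell_2+j_2\ell_1)/N}\bigl(e^{-2\pi i(j_1-j_2)(\ell_1-\ell_2)/N}-1\bigr),
$$
which is nonzero precisely when $(j_1-j_2)(\ell_1-\ell_2)$ is not divisible by $N=M^k$. Thus the task reduces to producing a quadruple in $\mathcal C_k$ whose difference product is nonzero modulo~$N$.

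Here I will use the hypothesis $0<\delta<1$, which forces the alphabet to contain at least two distinct digits $a \neq a'$ in $\mathcal A$. Take $j_1,\ell_1 \in \mathcal C_k$ to be the element with all digits equal to~$a$, and $j_2,\ell_2$ to be the element obtained by flipping only the zeroth digit to~$a'$. Then $j_1-j_2 = \ell_1-\ell_2 = a-a'$, so
$$
0 < |(j_1-j_2)(\ell_1-\ell_2)| = (a-a')^2 \leq (M-1)^2 < M^2 \leq M^k = N
$$
for $k\geq 2$, so the product is nonzero and strictly smaller than $N$ in absolute value, hence not divisible by~$N$. The corresponding $2\times 2$ minor is therefore nonzero, $A$ has rank at least~$2$, and the strict inequality $r_k < N^{\delta-1/2}$ follows. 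The only step requiring care is the alphabet construction, which is where both $|\mathcal A|\geq 2$ (giving distinct digits) and the bound $(M-1)^2 < M^k$ (requiring $k\geq 2$) are used simultaneously; there is no real obstacle beyond this.
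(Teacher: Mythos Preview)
Your proof is correct and follows essentially the same approach as the paper: both argue that equality $r_k=\|\indic_{\mathcal C_k}\mathcal F_N\indic_{\mathcal C_k}\|_{\HS}$ would force the restricted matrix to have rank~$1$, then exhibit a nonvanishing $2\times 2$ minor by choosing $j,j',\ell,\ell'\in\mathcal C_k$ with $0<|(j-j')(\ell-\ell')|<N$, using $|\mathcal A|\ge 2$ to find elements differing by less than $M$ and $k\ge 2$ to ensure $M^2\le N$. The only cosmetic difference is that the paper phrases this as a contradiction argument, while you proceed directly.
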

%%%%%%%%%%%%%%%%%%%%%%%%%%%%%%%%%%%%%%%%%%%%%%%%%%%%%%%%%%%%%%%%%%%%%%%%%%%%%%%%
\begin{proof}
Recall from~\eqref{e:HS} that
$N^{\delta-1/2}$ is the Hilbert--Schmidt norm of $\indic_{\mathcal C_k}\mathcal F_N\indic_{\mathcal C_k}$,
while $r_k$ is its operator norm. We again argue by contradiction, assuming
that $r_k=N^{\delta-1/2}$. Then $\indic_{\mathcal C_k}\mathcal F_N\indic_{\mathcal C_k}$
is a rank 1 operator; indeed, the sum of the squares of its singular
values is equal to the square of the maximal singular value.
It follows that each rank~2 minor of $\indic_{\mathcal C_k}\mathcal F_N\indic_{\mathcal C_k}$ is equal to zero,
namely
$$
\det\begin{pmatrix}
e^{-2\pi ij\ell/N} & e^{-2\pi i j\ell'/N} \\
e^{-2\pi ij'\ell/N} & e^{-2\pi i j'\ell'/N}
\end{pmatrix}
=0\quad\text{for all }j,j',\ell,\ell'\in\mathcal C_k.
$$
Computing the determinant we see that
$$
(j-j')(\ell-\ell')\in N\mathbb Z\quad\text{for all }j,j',\ell,\ell'\in\mathcal C_k.
$$
However, if $k\geq 2$ we may take $j=\ell,j'=\ell'\in\mathcal C_k$ such that (here we use that $\delta>0$)
$$
0<|j-j'|<M\leq \sqrt N,
$$
giving a contradiction.
\end{proof}
%%%%%%%%%%%%%%%%%%%%%%%%%%%%%%%%%%%%%%%%%%%%%%%%%%%%%%%%%%%%%%%%%%%%%%%%%%%%%%%%

%%%%%%%%%%%%%%%%%%%%%%%%%%%%%%%%%%%%%%%%%%%%%%%%%%%%%%%%%%%%%%%%%%%%%%%%%%%%%%%%
%%%%%%%%%%%%%%%%%%%%%%%%%%%%%%%%%%%%%%%%%%%%%%%%%%%%%%%%%%%%%%%%%%%%%%%%%%%%%%%%
\section{Relation to Fourier decay and additive energy}
\label{s:relator}

We now explain how a fractal uncertainty principle can be proved
if we have a Fourier decay bound or an additive energy bound on one of the sets $X,Y$.
While this does not give new results (compared to Theorems~\ref{t:fup-0}--\ref{t:fup-pres})
in the general setting, it leads to improvements in special cases.

Let $X,Y\subset [0,1]$ be two $h$-dependent closed sets which are
$\delta$-regular on scales $h$ to~1 with some $h$-independent regularity constant $C_R$
(see Definition~\ref{d:delta-regular}). In particular by~\eqref{e:vol-x} we have
\begin{equation}
  \label{e:vol-x-0}
\vol(X),\vol(Y)\leq Ch^{1-\delta}.
\end{equation}

To estimate the norm on the left-hand side of the uncertainty principle~\eqref{e:fup},
we use the $T^*T$ argument:
$$
\|\indic_X\mathcal F_h\indic_Y\|_{L^2\to L^2}^2=
\|\indic_Y\mathcal F_h^*\indic_X\mathcal F_h\indic_Y\|_{L^2\to L^2}.
$$
We write $\mathcal F_h^*\indic_X\mathcal F_h$ as an integral operator:
$$
\mathcal F_h^*\indic_X\mathcal F_h f(y)=\int_{\mathbb R} \mathcal K_X(y-y')f(y')\,dy'
$$
where
\begin{equation}
  \label{e:K-X-def}
\mathcal K_X(y)=(2\pi h)^{-1}\int_X e^{ixy/h}\,dx.
\end{equation}
Note that $\mathcal K_X(y)$ is just the rescaled Fourier transform of the
indicator function of~$X$.

By Schur's inequality applied to $\mathcal K_X(y-y')$ we see that
\begin{equation}
  \label{e:fup-schur}
\|\indic_X\mathcal F_h\indic_Y\|_{L^2\to L^2}^2\leq
\sup_{y'\in Y}\int_Y |\mathcal K_X(y-y')|\,dy.
\end{equation}
If we combine this with the basic bound (which follows from~\eqref{e:vol-x-0}) 
\begin{equation}
  \label{e:ft-basic-sup}
\sup|\mathcal K_X|\leq Ch^{-\delta}
\end{equation}
then we recover the bound~\eqref{e:fup}
with the standard exponent $\beta={1\over 2}-\delta$.

We now explore two possible conditions on $X$ where~\eqref{e:fup-schur} gives
an uncertainty principle with $\beta>\max(0,{1\over 2}-\delta)$.

%%%%%%%%%%%%%%%%%%%%%%%%%%%%%%%%%%%%%%%%%%%%%%%%%%%%%%%%%%%%%%%%%%%%%%%%%%%%%%%%
\subsection{Fourier decay}
  \label{s:fdec}

We first impose the condition that the Fourier transform $\mathcal K_X$ has a decay bound,
with the baseline given by the upper bound~\eqref{e:ft-basic-sup}:
namely for some $\beta_F>0$
\begin{equation}
  \label{e:fourier-dec}
|\mathcal K_X(y)|=\mathcal O(h^{\beta_F/2-\delta}|y|^{-\beta_F/2})\quad\text{for }h\leq |y|\leq 2.
\end{equation}
If we assume that $\vol(X)\sim h^{1-\delta}$, then this is 
equivalent to the Fourier transform $\widehat{\mu_X}(\xi)$ of the natural probability
measure $\mu_X(U)=\vol(X\cap U)/\vol(X)$ having $\mathcal O(|\xi|^{-\beta_F/2})$ decay
for $|\xi|\lesssim h^{-1}$. This is the finite scale version of requiring that
\emph{$X$ has Fourier dimension at least $\beta_F$}. In particular, it is natural
to assume that $\beta_F\leq \delta$ (since the Fourier dimension of a set is always bounded above by its Hausdorff dimension).

%%%%%%%%%%%%%%%%%%%%%%%%%%%%%%%%%%%%%%%%%%%%%%%%%%%%%%%%%%%%%%%%%%%%%%%%%%%%%%%%
\begin{prop}
  \label{l:fourier-dec-fup}
Assume that~\eqref{e:fourier-dec} holds for some $\beta_F\in (0,\delta]$. Then
the uncertainty principle~\eqref{e:fup} holds with
\begin{equation}
  \label{e:FUP-Fourier}
\beta={1\over 2}-\delta+{\beta_F\over 4}.
\end{equation}
\end{prop}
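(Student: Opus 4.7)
The starting point is the Schur bound \eqref{e:fup-schur} already derived in the excerpt: it suffices to show that
\[ \sup_{y'\in Y}\int_Y |\mathcal K_X(y-y')|\,dy \leq Ch^{1-2\delta+\beta_F/2}, \]
since taking square roots then yields the norm bound \eqref{e:FUP-Fourier}. My plan is to split this integral at a carefully chosen scale $R\in[h,1]$, using the trivial bound \eqref{e:ft-basic-sup} for $|y-y'|\leq R$ and the Fourier decay hypothesis \eqref{e:fourier-dec} on dyadic annuli for $|y-y'|>R$, then optimize in $R$.

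First I would establish a volume estimate on arbitrary intervals. The argument giving \eqref{e:vol-x} (cover $Y$ by intervals of length $h$ centered at points of $Y$ and bound their number using the lower bound in \eqref{e:dr-def}) localizes to give
\[ \vol(Y\cap[y'-\rho,y'+\rho])\leq C\rho^\delta h^{1-\delta}\quad\text{for any }\rho\in[h,1]. \]
Combined with \eqref{e:ft-basic-sup}, this yields for the near piece
\[ I_{\text{near}}(R):=\int_{Y\cap\{|y-y'|\leq R\}}|\mathcal K_X(y-y')|\,dy\leq Ch^{-\delta}\cdot R^\delta h^{1-\delta}=CR^\delta h^{1-2\delta}. \]

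For the far piece, I would decompose into dyadic annuli $A_k:=\{2^kR\leq|y-y'|\leq 2^{k+1}R\}$ for $k=0,1,\dots,\lceil\log_2(1/R)\rceil$. On each $A_k$, \eqref{e:fourier-dec} gives $|\mathcal K_X|\leq Ch^{\beta_F/2-\delta}(2^kR)^{-\beta_F/2}$, while the volume estimate above supplies $\vol(Y\cap A_k)\leq C(2^kR)^\delta h^{1-\delta}$. Summing,
\[ I_{\text{far}}(R)\leq Ch^{1-2\delta+\beta_F/2}\sum_{k=0}^{\lceil\log_2(1/R)\rceil}(2^kR)^{\delta-\beta_F/2}. \]
Here the hypothesis $\beta_F\leq\delta$ is crucial: it makes the exponent $\delta-\beta_F/2\geq\delta/2>0$, so the geometric sum is dominated by its last term, at scale $2^kR\sim 1$, and one obtains $I_{\text{far}}(R)\leq Ch^{1-2\delta+\beta_F/2}$ independent of $R$.

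Balancing $I_{\text{near}}(R)\sim I_{\text{far}}(R)$ forces $R^\delta=h^{\beta_F/2}$, so I would choose $R:=h^{\beta_F/(2\delta)}$; since $0<\beta_F\leq\delta$ this lies in $[h^{1/2},1]\subset[h,1]$, so the regime of the volume estimate is respected. Both pieces then contribute $Ch^{1-2\delta+\beta_F/2}$, yielding the claim. The only genuinely technical step is the local version of the regularity volume bound above; everything else is a routine $TT^*$-plus-dyadic-decomposition argument and the main conceptual input is simply that the Fourier decay exponent $\beta_F/2$ at the optimal scale converts, via $R\sim h^{\beta_F/(2\delta)}$, into a $1/4$-fold improvement in the final FUP exponent.
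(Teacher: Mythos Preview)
Your proof is correct and follows the same Schur-plus-dyadic-decomposition strategy as the paper; the only difference is that the paper does not introduce the intermediate scale $R$ but simply decomposes dyadically from scale $h$ to $1$. Your optimization over $R$ is in fact unnecessary: since you already show $I_{\text{far}}(R)\leq Ch^{1-2\delta+\beta_F/2}$ independently of $R$, taking $R=h$ gives $I_{\text{near}}(h)\leq Ch^{1-\delta}\leq Ch^{1-2\delta+\beta_F/2}$ (using $\beta_F\leq 2\delta$) directly, so the ``balancing'' step and the commentary about the optimal scale are red herrings.
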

%%%%%%%%%%%%%%%%%%%%%%%%%%%%%%%%%%%%%%%%%%%%%%%%%%%%%%%%%%%%%%%%%%%%%%%%%%%%%%%%
\begin{proof}
From $\delta$-regularity of $Y$ (similarly to~\eqref{e:vol-x}) we see that for any interval
$I$ with $h\leq |I|\leq 2$
$$
\vol(Y\cap I)\leq C h^{1-\delta} |I|^\delta.
$$
Breaking the integral below into dyadic pieces centered
at $y'$, we see that~\eqref{e:fourier-dec} implies
$$
\sup_{y'\in Y}\int_Y |\mathcal K_X(y-y')|\,dy\leq C h^{1-2\delta+\beta_F/2}.
$$
It remains to apply~\eqref{e:fup-schur}.
\end{proof}
%%%%%%%%%%%%%%%%%%%%%%%%%%%%%%%%%%%%%%%%%%%%%%%%%%%%%%%%%%%%%%%%%%%%%%%%%%%%%%%%
In general we do not have the Fourier decay property~\eqref{e:fourier-dec}.
For example, if $X$ is the $h/2$-neighborhood of the middle third Cantor set
where $h:=3^{-k}$ and $\delta=\log_32$
then an explicit computation shows that
$$
|\mathcal K_X(y)|=h^{-\delta}\bigg|{\sin(2y)\over 2\pi y}\bigg|\cdot\prod_{j=1}^{k-1}|\cos(3^jy)|.
$$
For $y:=2\pi\cdot 3^{-\ell}$ where $\ell\in [1,k-1]$ is an integer, we
have $|\mathcal K_X(y)|\sim h^{-\delta}$, contradicting~\eqref{e:fourier-dec}
for any $\beta_F>0$.

However, if $X$ is a Schottky limit set then
we have the following Fourier decay statement whose proof uses sum-product inequalities
and the nonlinear structure of the transformations generating~$X$:
%%%%%%%%%%%%%%%%%%%%%%%%%%%%%%%%%%%%%%%%%%%%%%%%%%%%%%%%%%%%%%%%%%%%%%%%%%%%%%%%
\begin{theo}\cite[Theorem~2]{hyperfup}
  \label{t:fdec}
Assume that $\Lambda_\Gamma\subset\mathbb R$ is a Schottky limit set of dimension $\delta>0$
and $\mu$ is the Patterson--Sullivan measure on $\Lambda_\Gamma$
(see~\S\ref{s:schottky}). Then there exists $\beta_F>0$ depending only on~$\delta$
such that for each phase function $\varphi\in C^2(\mathbb R;\mathbb R)$ with $\varphi'>0$ everywhere
and each amplitude $a\in C^1(\mathbb R)$ we have the generalized Fourier decay bound
(with $C$ depending on $\Gamma,\varphi,a$)
\begin{equation}
  \label{e:fdec}
\bigg|\int_{\Lambda_\Gamma}e^{i\xi\varphi(x)}a(x)\,d\mu(x)\bigg|\leq C|\xi|^{-\beta_F/2}\quad\text{for all $\xi$,}\quad
|\xi|\geq 1.
\end{equation}
\end{theo}
%%%%%%%%%%%%%%%%%%%%%%%%%%%%%%%%%%%%%%%%%%%%%%%%%%%%%%%%%%%%%%%%%%%%%%%%%%%%%%%%
See Figure~\ref{f:fourier} for numerical evidence supporting Theorem~\ref{t:fdec}.
Fourier decay statements similar to~\eqref{e:fdec} have been obtained
for Gibbs measures for the Gauss map by Jordan--Sahlsten~\cite{JordanSahlsten},
for limit sets of sufficiently nonlinear iterated function systems
by Sahlsten--Stevens~\cite{SahlstenStevens},
and in some higher dimensional cases by Li~\cite{LiJialun} and
Li--Naud--Pan~\cite{LiNaudPan}.

Arguing similarly to Proposition~\ref{l:fourier-dec-fup} we obtain the generalized FUP~\eqref{e:hfup}
for $X=Y=\Lambda_\Gamma(h)$ with the exponent~\eqref{e:FUP-Fourier}. Combining this with Theorem~\ref{t:fup-open-rel}
we obtain the following application to spectral gaps of convex co-compact hyperbolic surfaces
which uses that the exponent in Theorem~\ref{t:fdec} depends only on~$\delta$:
%%%%%%%%%%%%%%%%%%%%%%%%%%%%%%%%%%%%%%%%%%%%%%%%%%%%%%%%%%%%%%%%%%%%%%%%%%%%%%%%
\begin{figure}
\includegraphics[width=15cm]{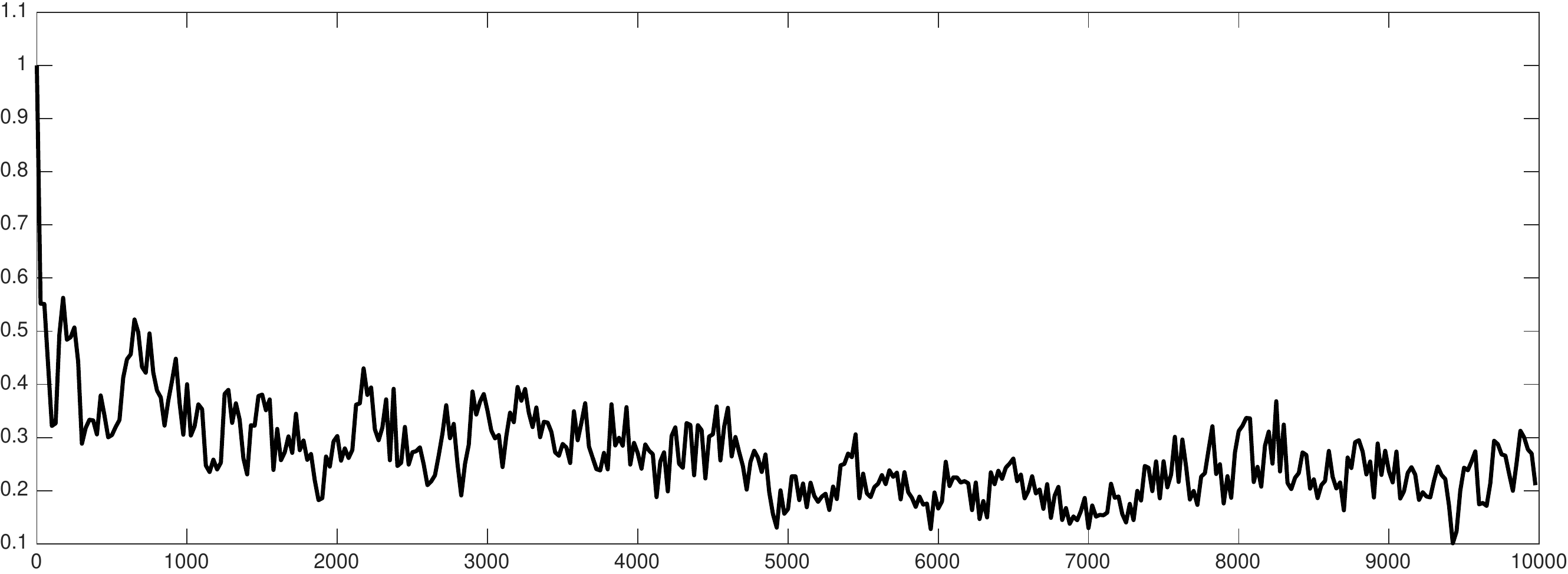}
\includegraphics[width=15cm]{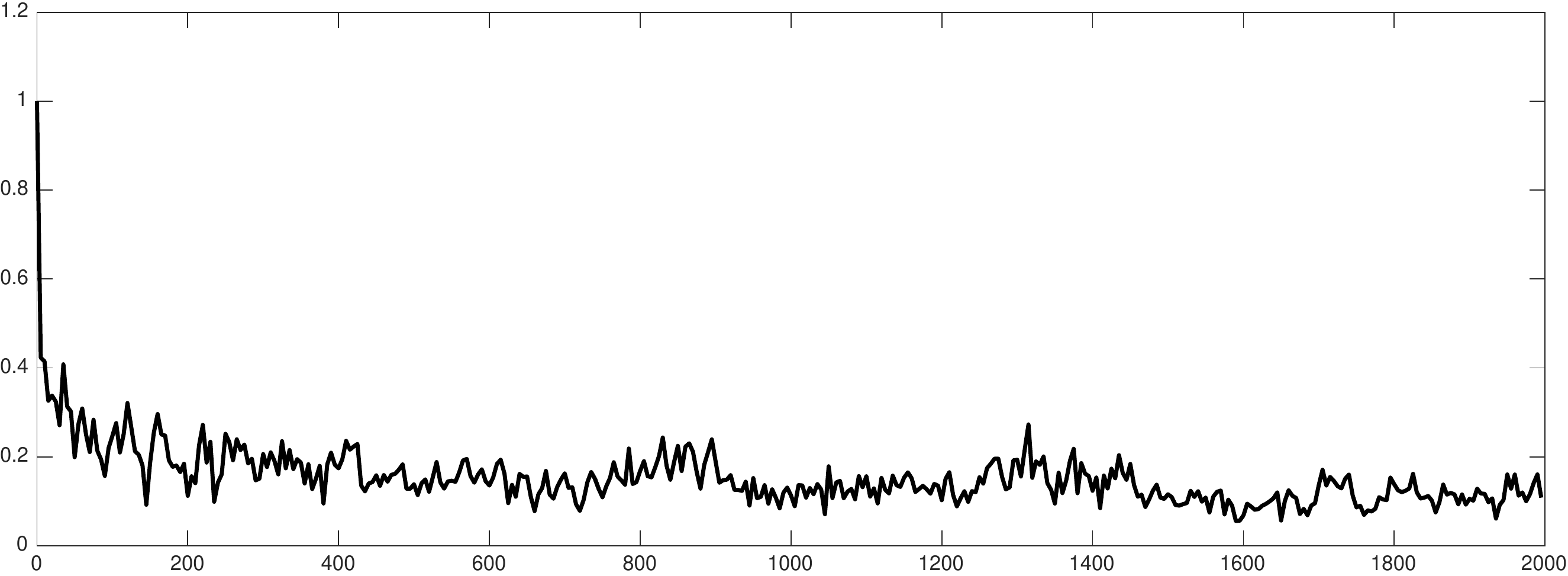}
\includegraphics[width=15cm]{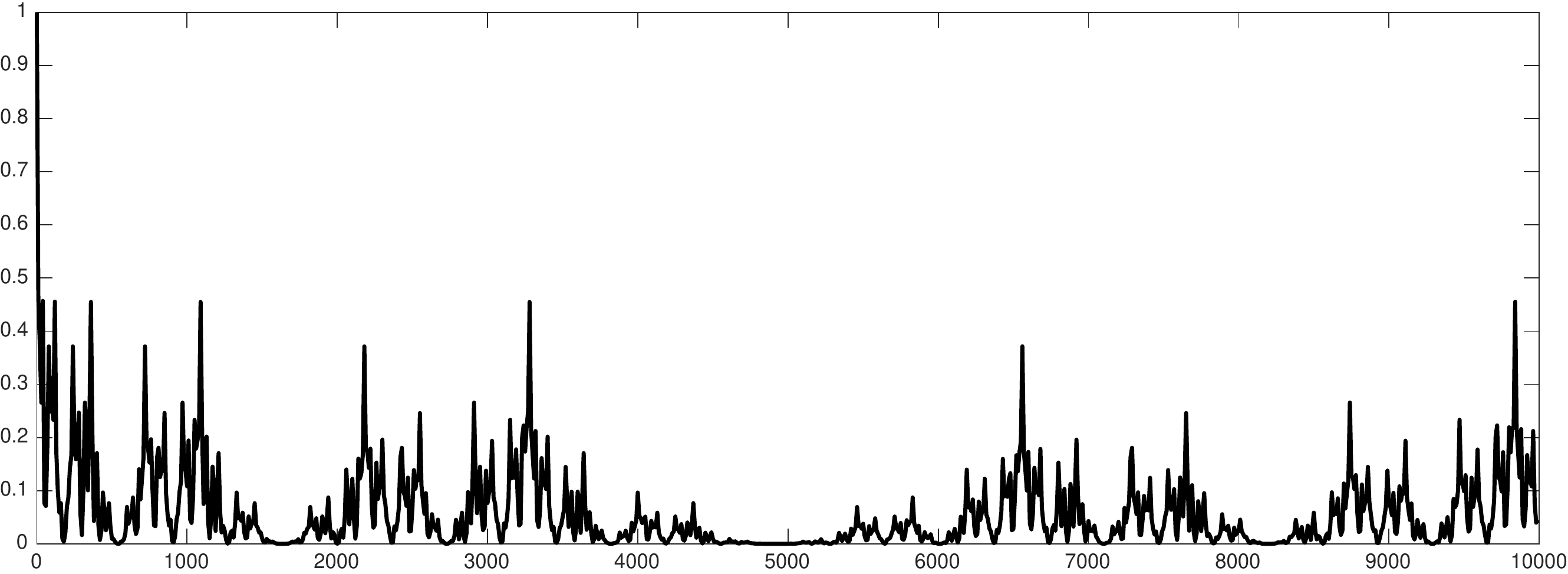}
\caption{Top two graphs: the absolute value of the Fourier transform $|\widehat\mu(\xi)|$
of the Patterson--Sullivan measure on a Schottky limit set. The top graph corresponds to the
same Schottky data as Figure~\ref{f:sch1}, with $\delta\approx 0.31038$.
The second graph corresponds to the three-funnel surface
with neck lengths $2,3,3$, with $\delta\approx 0.46932$. Both go to~0 polynomially fast
as $\xi\to\infty$ by Theorem~\ref{t:fdec} though this decay is not evident on the plots.
The bottom graph is the Fourier transform of the natural probability
measure on the middle third Cantor set, which does not go to~0. Since the Fourier transform is highly
oscillating we plot here its envelope, with each point corresponding
to the maximum of $|\hat\mu|$ over an interval of size 25 (top), 5 (middle),
and 10 (bottom).
}
\label{f:fourier}
\end{figure}
%%%%%%%%%%%%%%%%%%%%%%%%%%%%%%%%%%%%%%%%%%%%%%%%%%%%%%%%%%%%%%%%%%%%%%%%%%%%%%%%
\begin{theo}\cite[Theorem~1]{hyperfup}
\label{t:fdec-used}
Let $M$ be a convex co-compact hyperbolic surface with $\delta>0$. Then $M$ has an essential
spectral gap of size ${1\over 2}-\delta+\varepsilon(\delta)$ in the sense of Definition~\ref{d:spectral-gap},
where $\varepsilon(\delta)>0$ depends only on~$\delta$.
\end{theo}
%%%%%%%%%%%%%%%%%%%%%%%%%%%%%%%%%%%%%%%%%%%%%%%%%%%%%%%%%%%%%%%%%%%%%%%%%%%%%%%%

%%%%%%%%%%%%%%%%%%%%%%%%%%%%%%%%%%%%%%%%%%%%%%%%%%%%%%%%%%%%%%%%%%%%%%%%%%%%%%%%
\subsection{Additive energy}
We now give an FUP which follows from an improved additive energy bound on the
set~$X$. As before we assume here that $X,Y\subset [0,1]$
are $\delta$-regular on scales $h$ to~1.
We define additive energy as
\begin{equation}
  \label{e:ae-def}
\mathcal E_A(X) = \vol\{(x_1,x_2,x_3,x_4)\in X^4\mid x_1+x_2=x_3+x_4\}
\end{equation}
where we use the volume form on the hypersurface $\{x_1+x_2=x_3+x_4\}\subset\mathbb R^4$
induced by the standard volume form in the $(x_1,x_2,x_3)$ variables.
It follows immediately from~\eqref{e:vol-x-0} that
\begin{equation}
  \label{e:ae-trivial}
\mathcal E_A(X)\leq \vol(X)^3\leq C h^{3(1-\delta)}.
\end{equation}
%%%%%%%%%%%%%%%%%%%%%%%%%%%%%%%%%%%%%%%%%%%%%%%%%%%%%%%%%%%%%%%%%%%%%%%%%%%%%%%%
\begin{prop}
  \label{l:ae-used}
Assume that $X$ satisfies the improved additive energy bound for some $\beta_A>0$,
\begin{equation}
  \label{e:ae-improved}
\mathcal E_A(X)\leq Ch^{3(1-\delta)+\beta_A}.
\end{equation}
Then the fractal uncertainty principle~\eqref{e:fup}
holds with
\begin{equation}
  \label{e:FUP-AE}
\beta={3\over 4}\Big({1\over 2}-\delta\Big)+{\beta_A\over 8}.
\end{equation}
\end{prop}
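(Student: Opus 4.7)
The plan is to continue the $T^*T$ plus Schur reduction already carried out in~\eqref{e:fup-schur} and bound the Schur integral by H\"older's inequality with the conjugate pair $(4/3,4)$, giving
\[
\int_Y |\mathcal K_X(y-y')|\,dy \leq \vol(Y)^{3/4}\,\|\mathcal K_X\|_{L^4(\mathbb R)}.
\]
The factor $\vol(Y)^{3/4}$ is immediately controlled by $Ch^{3(1-\delta)/4}$ via~\eqref{e:vol-x-0}, so everything reduces to bounding $\|\mathcal K_X\|_{L^4(\mathbb R)}$ in terms of the additive energy $\mathcal E_A(X)$.

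For this I would use the classical identification of the $L^4$ norm of a Fourier transform with the $L^2$ norm of the autoconvolution. Since $\mathcal K_X$ equals the Fourier transform of $\indic_X$ up to the prefactor $(2\pi h)^{-1}$ and the rescaling $y=h\xi$, a change of variables followed by Plancherel (using $|\widehat{\indic_X}|^2 = \widehat{\indic_X\ast\indic_{-X}}$ and $(\indic_X\ast\indic_{-X})(t) = \vol(X\cap(X+t))$) yields
\[
\|\mathcal K_X\|_{L^4(\mathbb R)}^4 = c\,h^{-3}\int_{\mathbb R}\bigl(\vol(X\cap(X+t))\bigr)^2\,dt
\]
for an absolute constant $c$. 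Unfolding the square and changing variables identifies the right-hand integral with $\mathcal E_A(X)$ from~\eqref{e:ae-def}; the $(x_1,x_2,x_3)$-parametrization of the hypersurface $\{x_1+x_2=x_3+x_4\}$ used there matches the one arising naturally here. The hypothesis~\eqref{e:ae-improved} then gives $\|\mathcal K_X\|_{L^4} \leq Ch^{-3\delta/4+\beta_A/4}$.

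Combining via~\eqref{e:fup-schur} one obtains
\[
\|\indic_X\mathcal F_h\indic_Y\|_{L^2\to L^2}^2 \leq Ch^{3(1-\delta)/4}\cdot h^{-3\delta/4+\beta_A/4} = Ch^{3/4-3\delta/2+\beta_A/4},
\]
and taking a square root gives the claimed exponent $\beta = \tfrac{3}{4}(\tfrac{1}{2}-\delta) + \tfrac{\beta_A}{8}$. There is no serious obstacle in this plan: the one nontrivial identification is the standard equivalence between $\|\widehat{\indic_X}\|_{L^4}^4$ and $\mathcal E_A(X)$, while the remaining ingredients are H\"older's inequality and the volume bound~\eqref{e:vol-x-0} that was already in use. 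I expect the only subtlety when writing out the details to be keeping track of the constants in the rescaling $y=h\xi$ and the precise form of the measure in the definition~\eqref{e:ae-def}, but these do not affect the resulting exponent.
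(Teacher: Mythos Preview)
Your proposal is correct and follows essentially the same approach as the paper: apply H\"older with exponents $(4/3,4)$ in the Schur integral~\eqref{e:fup-schur}, use the volume bound~\eqref{e:vol-x-0} on $Y$, and identify $\|\mathcal K_X\|_{L^4}^4$ with $(2\pi h)^{-3}\mathcal E_A(X)$. The paper states this identification without the autoconvolution intermediate step you sketch, but the content is identical.
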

%%%%%%%%%%%%%%%%%%%%%%%%%%%%%%%%%%%%%%%%%%%%%%%%%%%%%%%%%%%%%%%%%%%%%%%%%%%%%%%%
\begin{proof}
By H\"older's inequality using the volume bound~\eqref{e:vol-x-0} on $Y$ we get
\begin{equation}
  \label{e:ae-imp-1}
\sup_{y'\in Y}\int_Y |\mathcal K_X(y-y')|\,dy\leq C h^{{3\over 4}(1-\delta)} \|\mathcal K_X\|_{L^4(\mathbb R)}.
\end{equation}
Recalling the definition~\eqref{e:K-X-def} of~$\mathcal K_X$ we compute
\begin{equation}
  \label{e:ae-imp-2}
\|\mathcal K_X\|^4_{L^4(\mathbb R)}=(2\pi h)^{-3}\mathcal E_A(X)\leq Ch^{-3\delta+\beta_A}.
\end{equation}
It remains to combine~\eqref{e:ae-imp-1} and~\eqref{e:ae-imp-2} with~\eqref{e:fup-schur}.
\end{proof}
%%%%%%%%%%%%%%%%%%%%%%%%%%%%%%%%%%%%%%%%%%%%%%%%%%%%%%%%%%%%%%%%%%%%%%%%%%%%%%%%
The next result shows that $\delta$-regular sets with $0<\delta<1$ satisfy
an improved additive energy bound:
%%%%%%%%%%%%%%%%%%%%%%%%%%%%%%%%%%%%%%%%%%%%%%%%%%%%%%%%%%%%%%%%%%%%%%%%%%%%%%%%
\begin{theo}\cite[\S6, Theorem~6]{hgap}
  \label{t:ae-improved}
Assume that $X\subset [0,1]$ is $\delta$-regular on scales $h$ to 1 with constant
$C_R$, and $0<\delta<1$. Then there exists $\beta_A=\beta_A(\delta,C_R)>0$ such that~\eqref{e:ae-improved} holds.
\end{theo}
%%%%%%%%%%%%%%%%%%%%%%%%%%%%%%%%%%%%%%%%%%%%%%%%%%%%%%%%%%%%%%%%%%%%%%%%%%%%%%%%
Note that~\cite[Theorem~6]{hgap} was formulated in slightly different
terms (similar to~\eqref{e:ae-schottky} below), see~\cite[\S7.2, proof of Theorem~5]{hgap}
for the version which uses~\eqref{e:ae-def}.
%%%%%%%%%%%%%%%%%%%%%%%%%%%%%%%%%%%%%%%%%%%%%%%%%%%%%%%%%%%%%%%%%%%%%%%%%%%%%%%%
\begin{exam}
\label{x:cantor-3-ae}
If $X$ is the $h$-neighborhood of the middle third Cantor set (see Example~\ref{x:cantor-3})
then~\eqref{e:ae-improved} holds with $\beta_A=\log_3(4/3)$, as follows
by an application of~\cite[Lemma~3.10]{oqm}.
\end{exam}
%%%%%%%%%%%%%%%%%%%%%%%%%%%%%%%%%%%%%%%%%%%%%%%%%%%%%%%%%%%%%%%%%%%%%%%%%%%%%%%%
We remark that if $X$ is sufficiently large (i.e. it contains a disjoint union of $\sim h^{-\delta}$
many intervals of length $h$ each) then any $\beta_A$ in~\eqref{e:ae-improved} has to satisfy
\begin{equation}
  \label{e:ae-max}
\beta_A\leq\min(\delta,1-\delta).
\end{equation}
See~\cite[(3.20)--(3.21)]{oqm} for a proof in the closely related discrete case.

Combining Theorem~\ref{t:ae-improved} with Proposition~\ref{l:ae-used}, we recover
the fractal uncertainty principle of Theorems~\ref{t:fup-0}--\ref{t:fup-pres} when
$\delta={1\over 2}$. On the other hand, when $\delta$ is far from $1\over 2$,
the exponent~\eqref{e:FUP-AE} does not improve over the standard exponent
$\beta=\max(0,{1\over 2}-\delta)$. In particular, even the best possible
additive energy improvement~\eqref{e:ae-max} does not give an improved FUP exponent
when $\delta\notin ({1\over 3},{4\over 7})$.

A similar statement is true for the
fractal uncertainty principle with a general phase~\eqref{e:hfup}, with the exponent in~\eqref{e:FUP-AE} divided by~2 (due to the fact that we use the argument presented at the end of~\S\ref{s:hfup}).
Also, the set $X$ should be replaced in~\eqref{e:ae-def} by its images under
certain diffeomorphisms determined by the phase. We refer the reader to~\cite[Theorem~4]{hgap}
and Conjecture~\ref{c:ae-schottky} below for details in the case of the hyperbolic FUP used in Theorem~\ref{t:fup-open-rel}.

%%%%%%%%%%%%%%%%%%%%%%%%%%%%%%%%%%%%%%%%%%%%%%%%%%%%%%%%%%%%%%%%%%%%%%%%%%%%%%%%
\begin{figure}
\includegraphics[width=7.5cm]{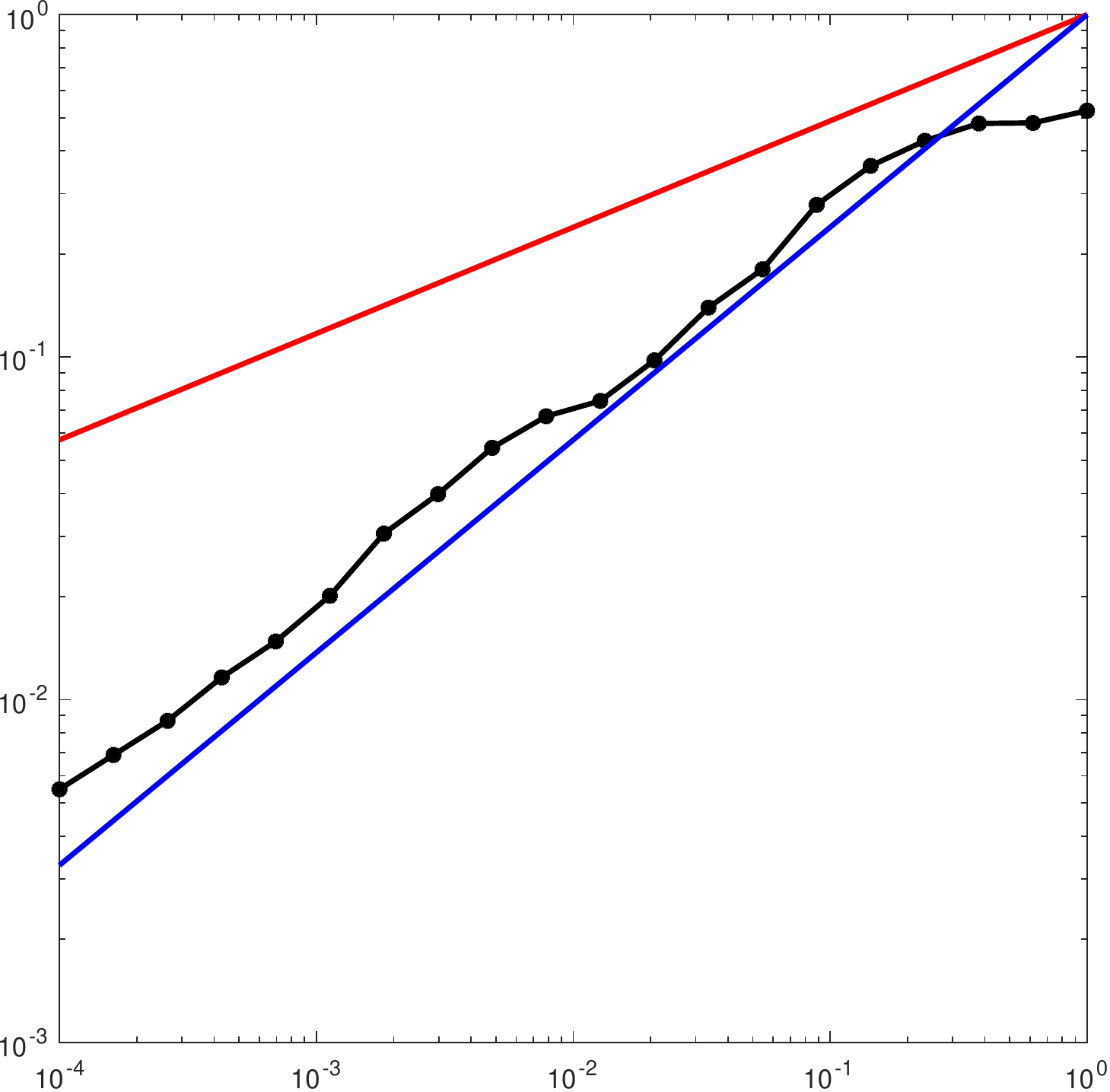}\quad
\includegraphics[width=7.5cm]{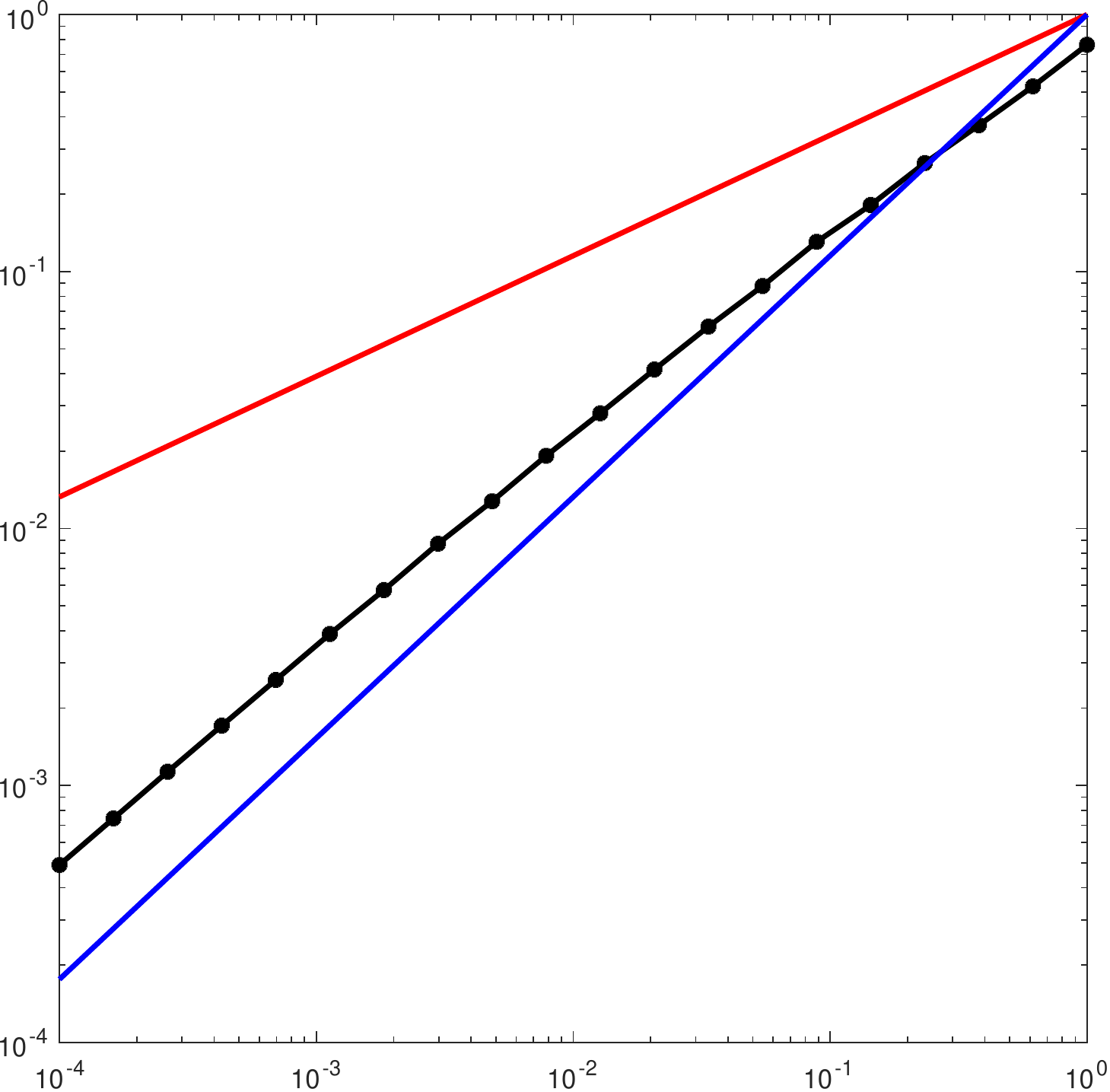}
\caption{Log-log plot of the Patterson--Sullivan additive energy~\eqref{e:ae-schottky}
as a function of~$h$ for some choice of $y\in\Lambda_\Gamma$ and two
different Schottky limit sets:
the Schottky data used in Figure~\ref{f:sch1}, with $\delta\approx 0.31038$
(left), and the three-funnel surface
with neck lengths $2,3,3$, with $\delta\approx 0.46932$ (right).
The (red) higher straight line corresponds to the asymptotic upper bound $h^{\delta}$
and the (blue) lower straight line, to the bound $h^{2\delta}$ of Conjecture~\ref{c:ae-schottky}.}
\label{f:ae}
\end{figure}
%%%%%%%%%%%%%%%%%%%%%%%%%%%%%%%%%%%%%%%%%%%%%%%%%%%%%%%%%%%%%%%%%%%%%%%%%%%%%%%%

While arguments based on additive energy only give an FUP when $\delta\approx {1\over 2}$,
for these values of~$\delta$
they may give a larger FUP exponent than other techniques. For instance,
for discrete Cantor sets considered in~\S\ref{s:cantor}, using additive energy
one can show an FUP with $\beta\sim 1/\log M$ where $M$ is the base of the Cantor set
in the special case $\delta\approx {1\over 2}$ (see~\cite[Proposition~3.12]{oqm}) while
the improvements $\beta-\max(0,{1\over 2}-\delta)$ obtained using
other methods decay polynomially ($\delta<{1\over 2}$) or exponentially ($\delta>{1\over 2}$) in $M$ (see~\cite[Corollaries~3.5 and~3.7]{oqm}). The better improvement for $\delta\approx {1\over 2}$ is
visible in the numerics on Figure~\ref{f:fupcloud}.

For the case of convex co-compact hyperbolic surfaces, numerics on Figure~\ref{f:gaps-hyp} also show
a better improvement in the size of the spectral gap when $\delta\approx {1\over 2}$.
This could be explained if one were to prove the following
%%%%%%%%%%%%%%%%%%%%%%%%%%%%%%%%%%%%%%%%%%%%%%%%%%%%%%%%%%%%%%%%%%%%%%%%%%%%%%%%
\begin{conj}
  \label{c:ae-schottky}
Let $\Lambda_\Gamma\subset\mathbb R$ be a Schottky limit set and $\mu$ the Patterson--Sullivan measure on~$\Lambda_\Gamma$, see~\S\ref{s:schottky}. Then $\Lambda_\Gamma$
has an additive energy estimate with improvement $\min(\delta,1-\delta)-$
in the following sense: for each $\varepsilon>0$ there exists $C_\varepsilon$ such that
for all $y\in \Lambda_\Gamma$ and all $h\in (0,1]$
\begin{equation}
  \label{e:ae-schottky}
\begin{gathered}
\mu^4\big\{ (x_1,x_2,x_3,x_4)\in(\Lambda_\Gamma\setminus I_w)^4\colon
|\gamma_y(x_1)+\gamma_y(x_2)-\gamma_y(x_3)-\gamma_y(x_4)|\leq h\big\}
\\\leq C_\varepsilon h^{\delta+\min(\delta,1-\delta)-\varepsilon}.
\end{gathered}
\end{equation}
Here $w\in\{1,\dots,2r\}$ is chosen so that $y$ is contained in the Schottky interval
$I_w$ (see~\S\ref{s:schottky}) and $\gamma_y(x)$ is the stereographic projection
of $x$ centered at~$y$, defined by
$$
\gamma_y(x)={1+xy\over y-x}.
$$
\end{conj}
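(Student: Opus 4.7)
The plan is to reduce Conjecture~\ref{c:ae-schottky} to a Fourier decay estimate for the pushforward measure
$\tilde\mu:=(\gamma_y)_*\bigl(\mu|_{\Lambda_\Gamma\setminus I_w}\bigr)$ on $\mathbb R$, and then to feed in Theorem~\ref{t:fdec}. Since $\gamma_y$ is a M\"obius transformation with $\gamma_y'>0$ on the relevant domain, the pushforward $\tilde\mu$ is $\delta$-regular on scales $0$ to~$1$ with constants depending on $\Gamma$ and~$y$, and the left-hand side of~\eqref{e:ae-schottky} equals
$$
E(h)\ :=\ \tilde\mu^4\bigl\{(s_1,s_2,s_3,s_4)\colon |s_1+s_2-s_3-s_4|\leq h\bigr\}.
$$

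Next I would apply a Plancherel identity. Fix an even Schwartz function $\phi$ with $\phi\geq\indic_{[-1,1]}$ and $\hat\phi\geq 0$, and set $\phi_h(t)=\phi(t/h)$, so that $\hat{\phi_h}(\xi)=h\hat\phi(h\xi)$. Then
$$
E(h)\ \leq\ \int\phi_h(s_1+s_2-s_3-s_4)\,d\tilde\mu^4\ =\ \frac{h}{2\pi}\int_{\mathbb R}|\hat{\tilde\mu}(\xi)|^4\,\hat\phi(h\xi)\,d\xi.
$$
Splitting the integral at $|\xi|=1/h$ and using the rapid decay of $\hat\phi$ off the interval $[-1,1]$ reduces the problem to controlling $\int_{|\xi|\leq 1/h}|\hat{\tilde\mu}(\xi)|^4\,d\xi$.

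To estimate this integral I would invoke Theorem~\ref{t:fdec} with the phase $\varphi=\gamma_y$ and a bump amplitude supported away from the pole, which yields $|\hat{\tilde\mu}(\xi)|\leq C|\xi|^{-\beta_F/2}$ for some $\beta_F=\beta_F(\delta)>0$ and all $|\xi|\geq 1$. A dyadic summation, together with the trivial bound $E(h)\lesssim h^\delta$ coming from the $\delta$-regularity of $\tilde\mu$, would then give
$$
E(h)\ \lesssim\ h^{\min(\delta,\,2\beta_F)}
$$
up to a possible logarithmic factor at the transition $\beta_F=\tfrac12$. To match the conjectured exponent $\delta+\min(\delta,1-\delta)$ one would need precisely $\beta_F\geq\min(\delta,\tfrac12)-\varepsilon$.

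The hard part is exactly this last step. The conjecture is essentially equivalent to saying that the Patterson--Sullivan measure attains the \emph{sharp} Fourier dimension $\min(2\delta,1)$ (up to $\varepsilon$ losses). The Fourier decay exponent produced by Theorem~\ref{t:fdec} arises from a discretized sum--product iteration and is in general far from this threshold, particularly for $\delta$ close to~$1$. Closing the gap seems to require either a quantitative refinement of that iteration that exploits the conformal self-similarity of $\mu$ under the Schottky generators, or a bootstrapping scheme in which the additive energy bound is used to improve the Fourier decay and vice versa, in the spirit of the multi-scale proof of Theorem~\ref{t:ae-improved}. Each of these routes appears to be beyond currently available techniques for Schottky limit sets, which is consistent with the statement being offered as a conjecture rather than a theorem.
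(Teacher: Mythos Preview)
The statement is labeled as a conjecture, and the paper offers no proof---only numerical evidence (Figure~\ref{f:ae}) and the remark around~\eqref{e:ae-max} that $\min(\delta,1-\delta)$ is the largest possible additive-energy improvement. Your proposal correctly ends by recognizing that current techniques do not close the argument, so there is no proof in the paper to compare against; on that point you and the paper are in agreement.

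Two technical comments on the reduction you outline. First, the displayed bound $E(h)\lesssim h^{\min(\delta,\,2\beta_F)}$ is not what your argument actually yields: the Fourier-decay input alone gives $E(h)\lesssim h^{\min(1,\,2\beta_F)}$ (up to a logarithm at the threshold), and combining with the trivial bound $h^\delta$ produces the \emph{larger} of the two exponents, not the smaller. Your stated requirement $\beta_F\geq\min(\delta,\tfrac12)-\varepsilon$ is nevertheless the correct output of that computation.

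Second, your claim that the conjecture is ``essentially equivalent'' to sharp Fourier dimension is only a one-way implication, and even that implication is stated suboptimally. If in the dyadic summation you pair the pointwise decay $|\hat{\tilde\mu}(\xi)|^2\lesssim|\xi|^{-\beta_F}$ with the $L^2$ energy bound
\[
\int_{|\xi|\leq T}|\hat{\tilde\mu}(\xi)|^2\,d\xi\ \lesssim\ T^{1-\delta},
\]
which is a standard consequence of $\delta$-regularity of $\tilde\mu$, you obtain $E(h)\lesssim h^{\min(1,\,\delta+\beta_F)}$. The conjectured exponent would then follow from $\beta_F\geq\min(\delta,1-\delta)-\varepsilon$ rather than $\min(\delta,\tfrac12)-\varepsilon$; for $\delta>\tfrac12$ this is a strictly weaker Fourier-decay requirement. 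It is still not known for Schottky limit sets, so your overall conclusion stands.
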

%%%%%%%%%%%%%%%%%%%%%%%%%%%%%%%%%%%%%%%%%%%%%%%%%%%%%%%%%%%%%%%%%%%%%%%%%%%%%%%%
Note that similarly to~\eqref{e:ae-trivial},
the left-hand side of~\eqref{e:ae-schottky} is trivially bounded above by $Ch^\delta$.
For an explanation for why the transformation $\gamma_y$ appears,
see~\cite[Definition~1.3]{hgap}.
For the relation of the Patterson--Sullivan additive energy in~\eqref{e:ae-schottky}
to the additive energy defined in~\eqref{e:ae-def} see~\cite[\S7.2, proof of Theorem~5]{hgap}.
Numerical evidence in support of Conjecture~\ref{c:ae-schottky} is given on Figure~\ref{f:ae}.

%%%%%%%%%%%%%%%%%%%%%%%%%%%%%%%%%%%%%%%%%%%%%%%%%%%%%%%%%%%%%%%%%%%%%%%%%%%%%%%%
%%%%%%%%%%%%%%%%%%%%%%%%%%%%%%%%%%%%%%%%%%%%%%%%%%%%%%%%%%%%%%%%%%%%%%%%%%%%%%%%
\section{FUP in higher dimensions}
\label{s:high-dim}

We finally discuss generalizations of Theorems~\ref{t:fup-0}--\ref{t:fup-pres},\ref{t:hfup-0}--\ref{t:hfup-pres} to fractal sets in~$\mathbb R^n$.
The case of $n\geq 2$ is currently not well-understood, with the known results
not general enough to be able to extend the applications (Theorems~\ref{t:appl-eig}--\ref{t:appl-dwe},\ref{t:fup-open}) from the setting of surfaces to the case of higher dimensional manifolds.
We discuss both the general FUP and the two-dimensional version of FUP for discrete Cantor sets
(see~\S\ref{s:cantor}), presenting the known results and formulating several open problems.

%%%%%%%%%%%%%%%%%%%%%%%%%%%%%%%%%%%%%%%%%%%%%%%%%%%%%%%%%%%%%%%%%%%%%%%%%%%%%%%%
\subsection{The continuous case}
  \label{s:hd-cont}

We first extend the definitions of uncertainty principle and fractal set to
the case of higher dimensions. The unitary semiclassical Fourier transform
on $\mathbb R^n$ is defined by the following generalization of~\eqref{e:F-h}:
\begin{equation}
  \label{e:F-h-hd}
\mathcal F_hf(\xi)=(2\pi h)^{-n/2}\int_{\mathbb R^n}e^{-i\langle x,\xi\rangle/h}f(x)\,dx.
\end{equation}
The notion of a $\delta$-regular subset of $\mathbb R^n$, where $\delta\in [0,n]$, is introduced similarly to Definition~\ref{d:delta-regular}, where we replace intervals in $\mathbb R$
with balls in~$\mathbb R^n$ and the length of an interval by the diameter of a ball.
Similarly to Definition~\ref{d:porous} we define what it means for a
subset of $\mathbb R^n$ to be $\nu$-porous. Regular and porous sets are related
by the following analogue of Proposition~\ref{l:regular-porous}:
porous sets are subsets of $\delta$-regular sets with $\delta<n$.

The higher dimensional version
of the question stated in the beginning of~\S\ref{s:fup-statement}
is as follows: given $\delta,C_R$, what is the largest value of $\beta$ such that
\begin{equation}
  \label{e:hd-fup}
\|\indic_X\mathcal F_h\indic_Y\|_{L^2(\mathbb R^n)\to L^2(\mathbb R^n)}=\mathcal O(h^\beta)\quad\text{as }h\to 0
\end{equation}
for all $h$-dependent sets $X,Y\subset B_{\mathbb R^n}(0,1)$ which are $\delta$-regular on scales $h$ to~1?

Similarly to~\eqref{e:easy-fup} we see that~\eqref{e:hd-fup} holds with
the basic FUP exponent
\begin{equation}
  \label{e:easy-fup-hd}
\beta_0=\max\Big(0,{n\over 2}-\delta\Big).
\end{equation}
Unfortunately, in dimensions $n\geq 2$ one cannot obtain an uncertainty principle~\eqref{e:hd-fup} with an exponent larger than~\eqref{e:easy-fup-hd} in the entire range
$\delta\in (0,n)$.
In dimension~2 this is illustrated by the following example,
taking $X,Y$ to be $h$-neighborhoods of two orthogonal line segments:
%%%%%%%%%%%%%%%%%%%%%%%%%%%%%%%%%%%%%%%%%%%%%%%%%%%%%%%%%%%%%%%%%%%%%%%%%%%%%%%%
\begin{exam}
  \label{x:cross}
Let $n=2$, $X=[0,1]\times [0,h]$, $Y=[0,h]\times[0,1]$. Then
$X,Y\subset\mathbb R^2$ are $1$-regular on scales $h$ to~1 with constant~10,
and they are ${1\over 10}$-porous on scales $10h$ to~$\infty$.
However, we have
$$
\|\indic_X \mathcal F_h\indic_Y\|_{L^2(\mathbb R^2)\to L^2(\mathbb R^2)}\sim 1\quad\text{as }h\to 0,
$$
as can be verified by applying the operator on the left-hand side to the function
$f(x_1,x_2)=\chi(x_1/h)\chi(x_2)$ where $\chi\in \CIc((0,1))$.
\end{exam}
%%%%%%%%%%%%%%%%%%%%%%%%%%%%%%%%%%%%%%%%%%%%%%%%%%%%%%%%%%%%%%%%%%%%%%%%%%%%%%%%
The currently known statements on FUP in higher dimensions thus make strong assumptions
on the structure of one or both sets. For simplicity we present those in dimension~2
and restrict ourselves to obtaining exponent $\beta>0$ for porous sets
(similarly to Theorem~\ref{t:fup-porous}):
%%%%%%%%%%%%%%%%%%%%%%%%%%%%%%%%%%%%%%%%%%%%%%%%%%%%%%%%%%%%%%%%%%%%%%%%%%%%%%%%
\begin{itemize}
\item Define $\pi_1(x_1,x_2)=x_1$, $\pi_2(x_1,x_2)=x_2$. If
both the projection $\pi_1(X)$ and each intersection $Y_y=Y\cap \pi_2^{-1}(y)\subset\mathbb R$,
$y\in\mathbb R$, are $\nu$-porous,
then~\eqref{e:hd-fup} holds with some $\beta=\beta(\nu)>0$.
Indeed, denote $X_1:=\pi_1(X)$; we may assume that $X=X_1\times \mathbb R$.
Denote by $\mathcal F_h^{(1)}$ and~$\mathcal F_h^{(2)}$ the unitary semiclassical Fourier transforms
in the first and the second variable respectively, then
$$
\begin{aligned}
\|\indic_X\mathcal F_h\indic_Y\|&=\|\indic_{X_1\times\mathbb R}\mathcal F_h^{(2)}\mathcal F_h^{(1)}\indic_Y\|=
\|\mathcal F_h^{(2)}\indic_{X_1\times\mathbb R}\mathcal F_h^{(1)}\indic_Y\|\\
&=\|\indic_{X_1\times\mathbb R}\mathcal F_h^{(1)}\indic_Y\|
\leq\sup_y\|\indic_{X_1}\mathcal F_h\indic_{Y_y}\|_{L^2(\mathbb R)\to L^2(\mathbb R)}\leq Ch^\beta
\end{aligned}
$$
where the last inequality follows from Theorem~\ref{t:fup-porous}.
\item A much more involved result is that if $X$ is $\nu$-porous
and both projections
$\pi_1(Y),\pi_2(Y)$ are $\nu$-porous
then~\eqref{e:hd-fup} holds with some $\beta=\beta(\nu)>0$.
A more general version of this statement was proved by Han--Schlag~\cite[Theorem~1.2]{HanSchlag}.
\end{itemize}
%%%%%%%%%%%%%%%%%%%%%%%%%%%%%%%%%%%%%%%%%%%%%%%%%%%%%%%%%%%%%%%%%%%%%%%%%%%%%%%%
See Propositions~\ref{l:hds-1}--\ref{l:hds-2} below for analogues of the above two statements
for discrete Cantor sets.

Similarly to~\S\ref{s:hfup} we may generalize~\eqref{e:hd-fup} to the estimate
\begin{equation}
  \label{e:hd-hfup}
\|\indic_X\mathcal B_h\indic_Y\|_{L^2(\mathbb R^n)\to L^2(\mathbb R^n)}=\mathcal O(h^\beta)\quad\text{as }h\to 0
\end{equation}
featuring a Fourier integral operator $\mathcal B_h$ defined by
$$
\mathcal B_h f(x)=(2\pi h)^{-n/2}\int_{\mathbb R^n} e^{i\Phi(x,y)/h}b(x,y)f(y)\,dy
$$
where $\Phi\in C^\infty(U;\mathbb R)$ is a phase function satisfying the nondegeneracy
condition
\begin{equation}
  \label{e:Phi-nondeg-hd}
\det(\partial^2_{x_jy_k}\Phi(x,y))_{j,k=1}^n\neq 0\quad\text{for all }(x,y)\in U,
\end{equation}
$U\subset\mathbb R^{2n}$ is an open set, and $b\in \CIc(U)$. In generalizations
of applications in~\S\ref{s:appl} (replacing hyperbolic surfaces with higher dimensional
hyperbolic manifolds) one would use~\eqref{e:hd-hfup} with the phase
\begin{equation}
  \label{e:hd-hPhi}
\Phi(x,y)=\log|x-y|,\quad
U=\{(x,y)\in\mathbb R^{2n}\mid x\neq y\}
\end{equation}
where $|x-y|$ denotes the Euclidean distance between $x,y\in\mathbb R^n$.

One can reduce the generalized FUP~\eqref{e:hd-hfup} to the FUP for Fourier transform,
\eqref{e:hd-fup}, similarly to the argument at the end of~\S\ref{s:hfup}.
However, in higher dimensions this reduction might be disadvantageous.
In fact, Example~\ref{x:cross} cannot be generalized to the phase~\eqref{e:hd-hPhi},
prompting the following
%%%%%%%%%%%%%%%%%%%%%%%%%%%%%%%%%%%%%%%%%%%%%%%%%%%%%%%%%%%%%%%%%%%%%%%%%%%%%%%%
\begin{conj}
\label{c:fup-curved}
For each $\nu>0$ there exists $\beta=\beta(\nu)>0$ such that the generalized FUP~\eqref{e:hd-hfup}
holds for each $X,Y\subset B_{\mathbb R^2}(0,1)$ which are $\nu$-porous on scales $h$ to~1
and each $b\in \CIc(U)$, assuming that the phase $\Phi$ is given by~\eqref{e:hd-hPhi}.
\end{conj}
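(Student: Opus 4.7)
The plan is to adapt the two-step reduction from \S\ref{s:hfup} (almost orthogonality at scale $\sqrt h$ followed by linearization) to the two-dimensional setting, and then invoke the Han--Schlag FUP in $\mathbb R^2$~\cite{HanSchlag} for the resulting Fourier transform estimate, using the curvature of $\Phi(x,y)=\log|x-y|$ to verify its hypotheses. Fix $\rho\in(0,1)$ close to~$1$ and choose a smooth cutoff $\chi\in\CIc(\mathbb R^2)$ supported in $X(h^{\rho/2})$, equal to~$1$ on $X(h^{\rho/4})$, with $|\partial^N\chi|\leq C_Nh^{-\rho N/2}$. It suffices to prove $\|\chi\mathcal B_h\indic_Y\|\leq Ch^\beta$.

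\textbf{Reduction to a Fourier transform FUP.} First I would partition $Y$ into disjoint clusters $\{Y_j\}$ of diameter~$\leq\sqrt h$. For clusters with $\mathrm{dist}(Y_j,Y_{j'})\geq h^{1/2-\varepsilon}$, repeated integration by parts in the kernel of $\mathcal B_h^*\chi^2\mathcal B_h$ (using the nondegeneracy~\eqref{e:Phi-nondeg-hd}: each step gains $h/|x-y|$ from the phase, against a loss $h^{\rho/2}$ from~$\chi^2$) yields the almost orthogonality estimate
\begin{equation*}
\|(\chi\mathcal B_h\indic_{Y_j})^*\chi\mathcal B_h\indic_{Y_{j'}}\|_{L^2(\mathbb R^2)\to L^2(\mathbb R^2)}\leq C_Nh^N\quad\text{for all }N,
\end{equation*}
so by Cotlar--Stein the problem reduces to a bound on each cluster. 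Let $y_0$ be the centre of~$Y_j$; Taylor expanding
$$
\Phi(x,y_0+\sqrt h\tilde y)/h=\Phi(x,y_0)/h+\langle\varphi_{y_0}(x),\tilde y\rangle/\sqrt h+\mathcal O(1),\quad\varphi_{y_0}(x):=\nabla_y\Phi(x,y_0),
$$
absorbing the zero-order phase into the output and the $\mathcal O(1)$ error into the amplitude, and making the change of variables $x\mapsto-\varphi_{y_0}(x)$ (a diffeomorphism by~\eqref{e:Phi-nondeg-hd}), the cluster bound becomes
\begin{equation*}
\|\indic_{\widetilde X_{y_0}}\mathcal F_{\sqrt h}\indic_{\widetilde Y_j}\|_{L^2(\mathbb R^2)\to L^2(\mathbb R^2)}\leq Ch^\beta,
\end{equation*}
where $\widetilde X_{y_0}:=-\varphi_{y_0}(X(h^{\rho/2}))$ and $\widetilde Y_j:=h^{-1/2}(Y_j-y_0)$ lies in a ball of radius~$\mathcal O(1)$ and is $\nu$-porous on scales $\sqrt h$ to~$\mathcal O(1)$.

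\textbf{Exploiting the curvature of~$\log|x-y|$.} For the log phase one computes $\varphi_{y_0}(x)=(y_0-x)/|x-y_0|^2$, which is (up to a shift) the inversion centred at~$y_0$: it is genuinely nonlinear, with local Jacobian depending on position. The remaining task is to verify that $\widetilde X_{y_0}$ satisfies the hypotheses of the two-dimensional Han--Schlag FUP~\cite[Theorem~1.2]{HanSchlag}. Concretely, I would aim to show: for every $\nu$-porous $X\subset B_{\mathbb R^2}(0,1)$ and every $y_0$ in the support of~$b$ bounded away from~$X$, the set $\widetilde X_{y_0}$ is $\nu'$-porous on scales $\sqrt h$ to~$1$ and also has $\nu'$-porous projections onto both coordinate axes, with $\nu'=\nu'(\nu,b)>0$. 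Han--Schlag would then provide an exponent $\beta=\beta(\nu)>0$ for the Fourier transform estimate above, after which one takes $\rho$ close enough to~$1$ to absorb the $h^{(\rho-1)/2}$ losses incurred by thickening~$X$.

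\textbf{Main obstacle.} The crux is the projection-porosity claim for $\widetilde X_{y_0}$. It is not a formal consequence of porosity of~$X$: a straight thin strip has a non-porous projection along its length, and while the inversion $\varphi_{y_0}$ bends such a strip into an arc, the projection of a curved strip onto a generic axis can still be an interval; Example~\ref{x:cross} shows that this failure really does obstruct the Fourier version of the FUP in~$\mathbb R^2$. What should save the day for the log phase is that as $y_0$ varies over the cluster $Y_j$ the preferred direction of the inversion $\varphi_{y_0}$ rotates, and averaging over $y_0\in Y_j$ may destroy any worst-case alignment between $X$ and these preferred axes. Making such an averaging argument rigorous, or alternatively formulating a refined Han--Schlag-type result that takes the curvature of the phase as input rather than its Euclidean image, is the main new ingredient required by Conjecture~\ref{c:fup-curved} and, to my knowledge, the primary reason this conjecture is still open.
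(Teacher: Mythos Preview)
The statement is Conjecture~\ref{c:fup-curved}, which the paper explicitly presents as an \emph{open problem}; there is no proof in the paper to compare against. Your write-up is therefore not a proof but an outline of a possible attack, and to your credit you recognize this and flag the sticking point yourself.

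Your reduction (Cotlar--Stein at scale $\sqrt h$, then linearization around each cluster center $y_0$) is the natural two-dimensional adaptation of the argument in~\S\ref{s:hfup}, and it does reduce matters to a Fourier-transform FUP in $\mathbb R^2$ for the sets $\widetilde X_{y_0}=-\varphi_{y_0}(X(h^{\rho/2}))$ and $\widetilde Y_j$. The real gap is precisely where you locate it: Han--Schlag~\cite{HanSchlag} needs the projections of $\widetilde X_{y_0}$ onto both coordinate axes to be porous, and your own counterexample (a thin strip, sent by the inversion $\varphi_{y_0}$ to a circular arc whose coordinate projections are intervals) shows this hypothesis is simply not available. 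So the reduction lands you on a statement that is, in general, false.

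One further remark on your suggested rescue. The idea of ``averaging over $y_0\in Y_j$ to rotate away a bad alignment'' cannot work in the form you state it: the Cotlar--Stein step has already decoupled the clusters, so each cluster estimate must hold for $y_0$ varying only over a ball of radius $\sqrt h$. On that scale the inversion $\varphi_{y_0}$ and its preferred axes move by $O(\sqrt h)$, which is no rotation at all to leading order. Any genuine averaging over the direction of $y_0$ would have to occur \emph{before} the almost-orthogonality decoupling, and it is not clear how to reconcile that with Cotlar--Stein. The second alternative you mention --- a curved-phase analogue of Han--Schlag that accepts the nondegenerate Hessian of $\Phi$ (or specifically the curvature of the log phase) as input rather than Euclidean projection porosity --- is much closer to what a proof would actually require, and is essentially a restatement of the conjecture itself.
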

%%%%%%%%%%%%%%%%%%%%%%%%%%%%%%%%%%%%%%%%%%%%%%%%%%%%%%%%%%%%%%%%%%%%%%%%%%%%%%%%
If proved, Conjecture~\ref{c:fup-curved} would be a key component in generalizing
the applications of FUP to hyperbolic surfaces (Theorems~\ref{t:appl-eig}--\ref{t:appl-dwe},
\ref{t:fup-open}) to the setting of higher dimensional hyperbolic manifolds.

%%%%%%%%%%%%%%%%%%%%%%%%%%%%%%%%%%%%%%%%%%%%%%%%%%%%%%%%%%%%%%%%%%%%%%%%%%%%%%%%
\subsection{The discrete setting}
\label{s:hd-cantor}

We now discuss a two-dimensional generalization of FUP for discrete Cantor sets
presented in~\S\ref{s:cantor}. We fix
\begin{itemize}
\item the base $M\geq 2$,
\item and two nonempty alphabets $\mathcal A,\mathcal B\subset \mathbb Z_M^2$
where $\mathbb Z_M:=\{0,\dots,M-1\}$.
\end{itemize}
For $k\geq 1$, define the Cantor set
$$
\mathcal C_{k,\mathcal A}:=\{a_0+Ma_1+\dots+M^{k-1}a_{k-1}\mid a_0,\dots,a_{k-1}\in \mathcal A\}\subset\mathbb Z_N^2,\quad
N:=M^k
$$
and similarly define $\mathcal C_{k,\mathcal B}$. The corresponding dimensions are
$$
\delta_A = {\log |\mathcal A|\over\log M},\quad
\delta_B = {\log |\mathcal B|\over\log M}.
$$
The two-dimensional unitary discrete Fourier transform is given by $\mathcal F_{N\times N}:\mathbb C^{N\times N}\to\mathbb C^{N\times N}$
where $\mathbb C^{N\times N}$ is the space of $N\times N$ matrices%
\footnote{One should think of these matrices as rank~2 tensors rather
than as operators.}
with the $\ell^2$ norm on the entries (i.e. the Hilbert--Schmidt norm)
and
$$
\mathcal F_{N\times N}u(j)={1\over N}\sum_{\ell\in\mathbb Z_N^2} \exp\Big(-{2\pi i \langle j,\ell\rangle\over N}\Big) u(\ell).
$$
The fractal uncertainty principle for two-dimensional discrete Cantor sets then takes the form
\begin{equation}
  \label{e:hd-fup-discrete}
\|\indic_{\mathcal C_{k,\mathcal A}}\mathcal F_{N\times N}\indic_{\mathcal C_{k,\mathcal B}}\|_{\mathbb C^{N\times N}\to\mathbb C^{N\times N}}
\leq CN^{-\beta}.
\end{equation}
Similarly to Remark~\ref{r:fup-discrete-easy} by using the unitarity of the Fourier transform and bounding the operator
norm in~\eqref{e:hd-fup-discrete} by the Hilbert--Schmidt norm we get~\eqref{e:hd-fup-discrete} with $C=1$ and
\begin{equation}
  \label{e:hd-discrete-fup-easy}
\beta=\max\Big(0,1-{\delta_A+\delta_B\over 2}\Big).
\end{equation}
The question is then:
$$
\begin{gathered}
\text{For which alphabets $\mathcal A,\mathcal B$ does the estimate~\eqref{e:hd-fup-discrete} hold}\\
\text{with some }\beta>\max\Big(0,1-{\delta_A+\delta_B\over 2}\Big)\,?
\end{gathered}
$$
We henceforth assume that $\delta_A,\delta_B\in (0,2)$ since otherwise~\eqref{e:hd-discrete-fup-easy} is sharp.
Unlike the one-dimensional case discussed in~\S\ref{s:cantor}, there exist other situations where~\eqref{e:hd-discrete-fup-easy} is sharp,
similarly to Example~\ref{x:cross}:
%%%%%%%%%%%%%%%%%%%%%%%%%%%%%%%%%%%%%%%%%%%%%%%%%%%%%%%%%%%%%%%%%%%%%%%%%%%%%%%%
\begin{exam}
  \label{x:cross-discrete}
1. Assume that $\mathcal A\supset \mathcal A_0$ and $\mathcal B\supset\mathcal B_0$
where
$$
\mathcal A_0:=\{0,\dots,M-1\}\times \{0\},\quad
\mathcal B_0:=\{0\}\times\{0,\dots,M-1\}.
$$
Then the norm in~\eqref{e:hd-fup-discrete} is equal to~1.
(Note that $\delta_A+\delta_B\geq 2$ in this case.)

2. Assume that $\mathcal A\subset\mathcal A_0$ and $\mathcal B\subset \mathcal B_0$.
Then the norm in~\eqref{e:hd-fup-discrete} is equal to
$N^{-\beta}$ with $\beta=1-{\delta_A+\delta_B\over 2}$.
(Note that $\delta_A+\delta_B\leq 2$ in this case.)
\end{exam}
%%%%%%%%%%%%%%%%%%%%%%%%%%%%%%%%%%%%%%%%%%%%%%%%%%%%%%%%%%%%%%%%%%%%%%%%%%%%%%%%
Similarly to Lemma~\ref{l:submultiplicativity} we have a submultiplicativity property:
%%%%%%%%%%%%%%%%%%%%%%%%%%%%%%%%%%%%%%%%%%%%%%%%%%%%%%%%%%%%%%%%%%%%%%%%%%%%%%%%
\begin{lemm}
  \label{l:hd-submul}
Put
$$
r_k:=\|\indic_{\mathcal C_{k,\mathcal A}}\mathcal F_{N\times N}\indic_{\mathcal C_{k,\mathcal B}}\|_{\mathbb C^{N\times N}\to\mathbb C^{N\times N}}.
$$
Then for all $k_1,k_2$ we have
$$
r_{k_1+k_2}\leq r_{k_1}\cdot r_{k_2}.
$$
\end{lemm}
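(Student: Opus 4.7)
The plan is to mimic the proof of Lemma~\ref{l:submultiplicativity} from the one-dimensional setting, replacing scalar indices in $\mathbb{Z}_N$ with vector indices in $\mathbb{Z}_N^2$ throughout. The starting point is the base-$M$ decomposition
$$
\mathcal{C}_{k,\mathcal{A}}=\mathcal{C}_{k_1,\mathcal{A}}+N_1\cdot\mathcal{C}_{k_2,\mathcal{A}}=\mathcal{C}_{k_2,\mathcal{A}}+N_2\cdot\mathcal{C}_{k_1,\mathcal{A}},
$$
which holds in $\mathbb{Z}_N^2$ (with componentwise addition) and is a disjoint union in each case; the analogous identities hold for $\mathcal{B}$. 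Here as before $N_j:=M^{k_j}$ and $N=N_1N_2$.

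Given $u\in\mathbb{C}^{N\times N}$ supported on $\mathcal{C}_{k,\mathcal{B}}$, I would set
$$
U_{p,q}:=u(N_1q+p),\quad p\in\mathcal{C}_{k_1,\mathcal{B}},\ q\in\mathcal{C}_{k_2,\mathcal{B}},
$$
and, writing $v:=\mathcal{F}_{N\times N}u$,
$$
V_{a,b}:=v(N_2a+b),\quad a\in\mathcal{C}_{k_1,\mathcal{A}},\ b\in\mathcal{C}_{k_2,\mathcal{A}}.
$$
By construction $\|U\|_{\mathrm{HS}}^2=\|u\|^2$ and $\|\indic_{\mathcal{C}_{k,\mathcal{A}}}v\|^2$ equals the sum of $|V_{a,b}|^2$ over the indicated indices. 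The definition of $\mathcal{F}_{N\times N}$ then gives
$$
V_{a,b}=\frac{1}{N}\sum_{p,q}\exp\!\Big(-\frac{2\pi i\,\langle N_2a+b,\,N_1q+p\rangle}{N}\Big)\,U_{p,q}.
$$
The same small miracle as in the one-dimensional case occurs: the cross term in the inner product is $\langle N_2a,N_1q\rangle=N\langle a,q\rangle\in N\mathbb{Z}$, so that factor drops out of the exponential. The remaining three terms factor as $\exp(-2\pi i\langle a,p\rangle/N_1)\cdot\exp(-2\pi i\langle b,p\rangle/N)\cdot\exp(-2\pi i\langle b,q\rangle/N_2)$.

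This decomposition exhibits the map $U\mapsto V$ as a composition of three operations, in the style of the two-dimensional FFT: first, for each fixed $p\in\mathcal{C}_{k_1,\mathcal{B}}$, a 2D DFT of size $N_2$ applied in the $q$-variable and then restricted to $b\in\mathcal{C}_{k_2,\mathcal{A}}$, which by definition of $r_{k_2}$ contracts the corresponding $\ell^2$ norms by a factor of at most $r_{k_2}$; second, pointwise multiplication by the unit-modulus twist $\exp(-2\pi i\langle b,p\rangle/N)$, which preserves norms; third, for each fixed $b\in\mathcal{C}_{k_2,\mathcal{A}}$, a 2D DFT of size $N_1$ in the $p$-variable restricted to $a\in\mathcal{C}_{k_1,\mathcal{A}}$, which contracts by at most $r_{k_1}$. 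Summing the resulting slice-wise inequalities in Hilbert--Schmidt norm and composing yields $\|V\|_{\mathrm{HS}}\leq r_{k_1}\cdot r_{k_2}\cdot\|U\|_{\mathrm{HS}}$, which is the claimed bound.

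Unlike the one-dimensional situation, there is no serious obstacle; the main thing to verify is that the ``miracle'' of the cross term dropping out of the exponential still operates in the two-dimensional setting, and this is immediate from bilinearity of the Euclidean pairing. A minor bookkeeping point is that here $U$ and $V$ are $|\mathcal{C}_{k_1,\mathcal{B}}|\times|\mathcal{C}_{k_2,\mathcal{B}}|$ and $|\mathcal{C}_{k_1,\mathcal{A}}|\times|\mathcal{C}_{k_2,\mathcal{A}}|$ arrays whose entries are themselves \emph{not} complex numbers packaged as matrices in an obvious way with Hilbert--Schmidt structure, but rather complex numbers indexed by pairs of two-dimensional indices; identifying the ``rows'' and ``columns'' used at each step simply corresponds to freezing one of the two pairs $(p,q)$ or $(a,b)$, and no additional orthogonality argument is needed beyond the definition of $r_{k_1}$ and $r_{k_2}$.
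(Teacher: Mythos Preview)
Your proposal is correct and follows exactly the approach the paper intends: the paper does not spell out a proof of Lemma~\ref{l:hd-submul} but simply says ``Similarly to Lemma~\ref{l:submultiplicativity}'', and you have carried out precisely that adaptation, replacing scalar indices by vector indices in $\mathbb Z_N^2$ and checking that the cross term $N_1N_2\langle a,q\rangle$ still vanishes modulo $N$ by bilinearity of the pairing. The three-step FFT factorization and the slice-wise application of the definitions of $r_{k_1},r_{k_2}$ go through verbatim.
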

%%%%%%%%%%%%%%%%%%%%%%%%%%%%%%%%%%%%%%%%%%%%%%%%%%%%%%%%%%%%%%%%%%%%%%%%%%%%%%%%
Using this and arguing similarly to Lemma~\ref{l:improve-1} we obtain a condition under which one can prove~\eqref{e:hd-fup-discrete}
with $\beta>1-{\delta_A+\delta_B\over 2}$:
%%%%%%%%%%%%%%%%%%%%%%%%%%%%%%%%%%%%%%%%%%%%%%%%%%%%%%%%%%%%%%%%%%%%%%%%%%%%%%%%
\begin{prop}
  \label{l:hd-fup-disc-1}
Assume that there exist
$$
a,a'\in\mathcal A,\quad
b,b'\in\mathcal B,\quad
\langle a-a',b-b'\rangle\neq 0.
$$
(Here the inner product is an element of $\mathbb Z$ rather than $\mathbb Z/N\mathbb Z$.)
Then~\eqref{e:hd-fup-discrete} holds for some $\beta>1-{\delta_A+\delta_B\over 2}$.
\end{prop}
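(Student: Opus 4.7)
The plan is to mirror the one-dimensional argument of Lemma~\ref{l:improve-1}. First, by the submultiplicativity in Lemma~\ref{l:hd-submul} and Fekete's lemma, the limit $\lim_{k\to\infty}(\log r_k)/(k\log M)$ exists and equals $\inf_k(\log r_k)/(k\log M)$. Consequently, to obtain $\beta > 1 - (\delta_A+\delta_B)/2$ in~\eqref{e:hd-fup-discrete} it suffices to exhibit a single order $k_0$ at which the strict inequality $r_{k_0} < N_0^{(\delta_A+\delta_B)/2 - 1}$ holds, where $N_0 := M^{k_0}$. The right-hand side is precisely the Hilbert--Schmidt norm of the matrix $(N_0^{-1} e^{-2\pi i \langle j,\ell\rangle/N_0})_{j \in \mathcal{C}_{k_0,\mathcal{A}},\,\ell \in \mathcal{C}_{k_0,\mathcal{B}}}$, since the sum of squared moduli of its entries equals $|\mathcal{C}_{k_0,\mathcal{A}}|\,|\mathcal{C}_{k_0,\mathcal{B}}|/N_0^2 = N_0^{\delta_A + \delta_B - 2}$. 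Thus the task reduces to ruling out equality of operator and Hilbert--Schmidt norms.

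I would argue by contradiction: if these norms agreed and the matrix were nonzero, then all but one singular value would vanish, forcing rank at most one, so every $2\times 2$ minor would be zero. Computing the minor exactly as in Lemma~\ref{l:improve-1} yields
\[
\langle j - j',\, \ell - \ell'\rangle \equiv 0 \pmod{N_0}
\quad\text{for all } j, j' \in \mathcal{C}_{k_0,\mathcal{A}},\ \ell, \ell' \in \mathcal{C}_{k_0,\mathcal{B}}.
\]
To violate this I would use the hypothesis to pick $a, a' \in \mathcal{A}$ and $b, b' \in \mathcal{B}$ with $\langle a-a', b-b'\rangle \neq 0$ as an integer, fix any $c \in \mathcal{A}$ and $d \in \mathcal{B}$, set $s := M + M^2 + \cdots + M^{k_0-1}$, and take $j := a + cs$, $j' := a' + cs$, $\ell := b + ds$, $\ell' := b' + ds$. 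By construction these lie in the required Cantor sets (their digit expansions are $a,c,c,\dots,c$ and $a',c,c,\dots,c$, respectively, and analogously for $\ell,\ell'$), while the padding cancels in the differences, giving $j - j' = a - a'$ and $\ell - \ell' = b - b'$ as integer vectors with coordinates in $\{-(M-1),\dots,M-1\}$. Hence $\langle j-j',\ell-\ell'\rangle = \langle a-a',b-b'\rangle$ is a nonzero integer of absolute value at most $2(M-1)^2$, and choosing $k_0$ so that $M^{k_0} > 2(M-1)^2$ forces it to be nonzero modulo $N_0$, the desired contradiction.

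I do not anticipate any genuine obstacle, since the proof is a direct two-dimensional transcription of Lemma~\ref{l:improve-1}. The only minor technical point is that $\mathcal{A}$ and $\mathcal{B}$ need not contain~$0$, so a single digit $a \in \mathcal{A}$ is not automatically an element of $\mathcal{C}_{k_0,\mathcal{A}}$; padding by fixed digits $c \in \mathcal{A}$ and $d \in \mathcal{B}$ resolves this without disturbing the differences. A uniform treatment of the regimes $\delta_A+\delta_B$ greater than, equal to, or less than $2$ comes for free: when $\delta_A+\delta_B > 2$ the conclusion already follows from unitarity of $\mathcal{F}_{N\times N}$ (which gives $r_k \leq 1 < N^{(\delta_A+\delta_B)/2-1}$ automatically), while in the remaining range the rank-one contradiction above supplies the strict improvement.
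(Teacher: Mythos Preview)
Your proposal is correct and follows exactly the approach the paper intends: the text merely says ``arguing similarly to Lemma~\ref{l:improve-1}'' after invoking Lemma~\ref{l:hd-submul}, and your reduction via Fekete, the rank-one/minor argument, and the choice of $j,j',\ell,\ell'$ with small differences are the direct two-dimensional transcription of that lemma. The padding by a fixed digit $c\in\mathcal A$, $d\in\mathcal B$ is a harmless bookkeeping detail (it does not arise in the one-dimensional case only because there $\mathcal A=\mathcal B$ and one takes $j=\ell$, $j'=\ell'$), and your bound $|\langle a-a',b-b'\rangle|\le 2(M-1)^2<M^{k_0}$ for suitable $k_0$ is the correct replacement for the inequality $|j-j'|<M\le\sqrt N$ used there.
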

%%%%%%%%%%%%%%%%%%%%%%%%%%%%%%%%%%%%%%%%%%%%%%%%%%%%%%%%%%%%%%%%%%%%%%%%%%%%%%%%
\begin{rema}
If the condition of Proposition~\ref{l:hd-fup-disc-1} fails, then
we have $\langle j-j',\ell-\ell'\rangle=0$ for all $j,j'\in\mathcal C_{k,\mathcal A}$
and $\ell,\ell'\in\mathcal C_{k,\mathcal B}$, in which case it is easy
to check that the left-hand side of~\eqref{e:hd-fup-discrete}
is equal to $N^{-\beta}$ where $\beta=1-{\delta_A+\delta_B\over 2}\geq 0$.
\end{rema}
%%%%%%%%%%%%%%%%%%%%%%%%%%%%%%%%%%%%%%%%%%%%%%%%%%%%%%%%%%%%%%%%%%%%%%%%%%%%%%%%
On the other hand, there is no known criterion for when~\eqref{e:hd-fup-discrete} holds with some $\beta>0$.
We make the following
%%%%%%%%%%%%%%%%%%%%%%%%%%%%%%%%%%%%%%%%%%%%%%%%%%%%%%%%%%%%%%%%%%%%%%%%%%%%%%%%
\begin{figure}
\includegraphics[width=6cm]{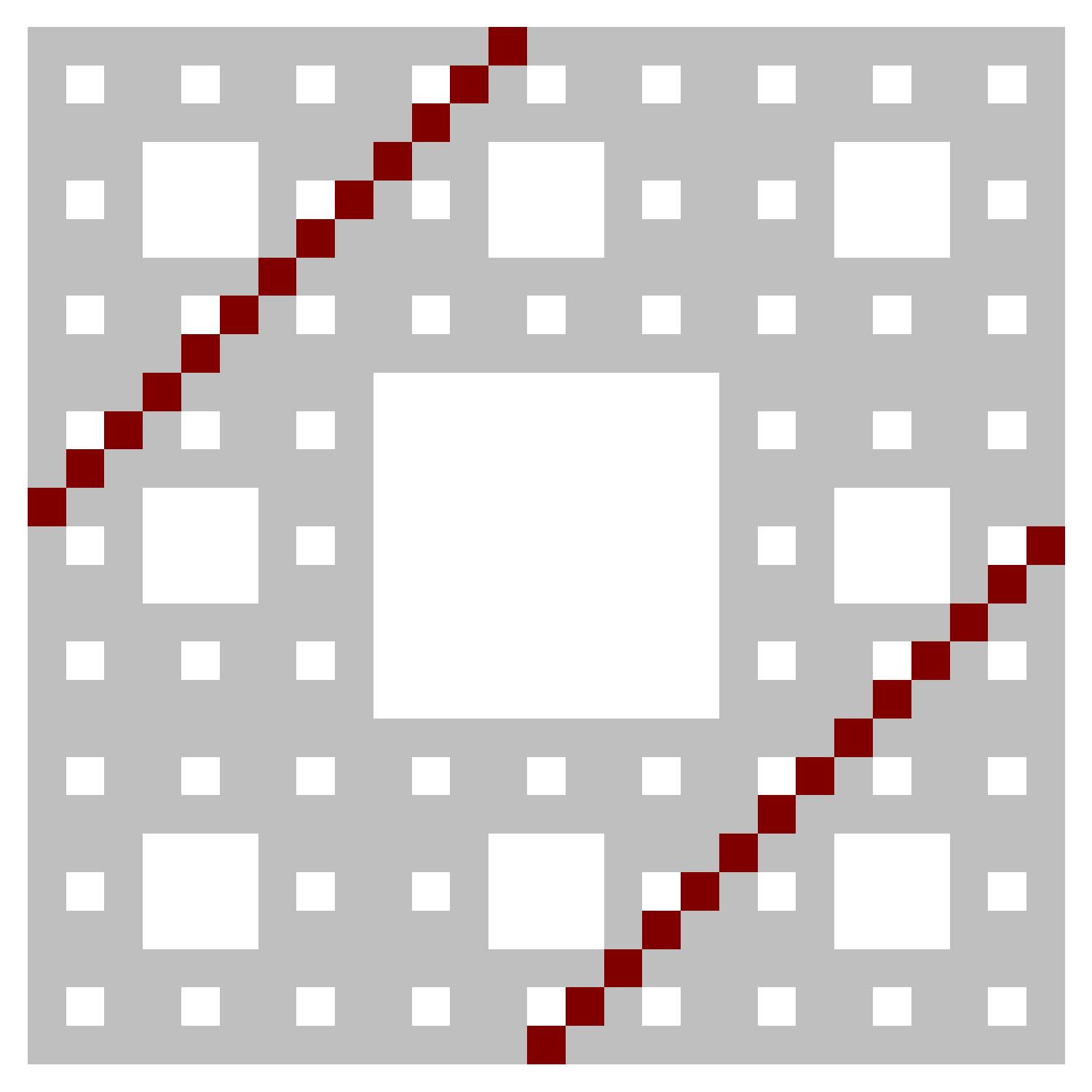}
\caption{A discrete Sierpi\'nski carpet $\mathcal C_{k,\mathcal A}$, with
$M:=3$, $\mathcal A:=\{0,1,2\}^2\setminus \{(1,1)\}$; in the picture we have $k=3$.
The dark red line is the diagonal $\{(j,j+{3^k-1\over 2})\mid j=0,\dots,3^k-1\}$
contained in $\mathcal C_{k,\mathcal A}$.}
\label{f:sierpinski}
\end{figure}
%%%%%%%%%%%%%%%%%%%%%%%%%%%%%%%%%%%%%%%%%%%%%%%%%%%%%%%%%%%%%%%%%%%%%%%%%%%%%%%%
%
%%%%%%%%%%%%%%%%%%%%%%%%%%%%%%%%%%%%%%%%%%%%%%%%%%%%%%%%%%%%%%%%%%%%%%%%%%%%%%%%
\begin{conj}
  \label{c:hd-fup-disc}
The bound~\eqref{e:hd-fup-discrete} holds with some $\beta>0$
(by Lemma~\ref{l:hd-submul} this is the same as saying that
the left-hand side of~\eqref{e:hd-fup-discrete} is $<1$ for some
value of~$k$) unless one of the following situations happens:
\begin{enumerate}
\item one of the sets $\mathcal A,\mathcal B$ contains a horizontal line
and the other set contains a vertical line, or
\item for each $k$, one of the sets $\mathcal C_{k,\mathcal A},\mathcal C_{k,\mathcal B}$ contains a diagonal line and the other set contains an antidiagonal line.
\end{enumerate}
\end{conj}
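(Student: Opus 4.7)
The plan is to extend the strategy that proved Theorem~\ref{t:discreteFUP} to the two-dimensional setting. By Lemma~\ref{l:hd-submul} together with Fekete's lemma, it suffices to exhibit one value of $k$ for which $r_k < 1$. I would argue by contradiction: suppose $r_k = 1$ for every $k$. This produces a nonzero $u \in \mathbb{C}^{N\times N}$ with $\|u\| = 1$, $\supp u \subset \mathcal{C}_{k,\mathcal{B}}$, and $\supp(\mathcal{F}_{N\times N} u) \subset \mathcal{C}_{k,\mathcal{A}}$.

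Following the polynomial method of Lemma~\ref{l:improve-0}, encode $u$ as the bivariate Laurent polynomial $p(z_1,z_2)=\sum_{\ell}u(\ell)z_1^{\ell_1}z_2^{\ell_2}$. Then $p$ has Newton polytope contained in $\mathcal{C}_{k,\mathcal{B}}$ and satisfies $p(\omega_N^{j_1},\omega_N^{j_2})=0$ for every $j\in\mathbb{Z}_N^2\setminus\mathcal{C}_{k,\mathcal{A}}$, where $\omega_N=e^{-2\pi i/N}$. For each forbidden digit $(a,b)\in\mathbb{Z}_M^2\setminus\mathcal{A}$ the zero set contains the product-structured subgrid $\{(\omega_N^a\omega_{N/M}^s,\omega_N^b\omega_{N/M}^t):s,t\in\mathbb{Z}_{N/M}\}$. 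After the substitution $z_1\mapsto\omega_N^az_1$, $z_2\mapsto\omega_N^bz_2$, this vanishing is equivalent to a congruence of $p$ modulo the ideal $(z_1^{N/M}-1,z_2^{N/M}-1)$, which in turn becomes a linear system on the coefficients of $u$ indexed by $(\mathbb{Z}_M^2\setminus\mathcal{A})\times\mathbb{Z}_{N/M}^2$, to be compared with the $|\mathcal{B}|^k$ unknowns supported on $\mathcal{C}_{k,\mathcal{B}}$.

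The heart of the argument is then a case analysis on the combinatorial structure of $\mathcal{A}$ and $\mathcal{B}$. When $\mathcal{A}$ or $\mathcal{B}$ has a full row or column missing in $\mathbb{Z}_M^2$, the constraints decouple along an affine subvariety of codimension one and the analog of Lemma~\ref{l:improve-0} applies directly, because the resulting polynomial must be divisible by a one-variable factor whose degree exceeds the extent of the Newton polytope in that direction. Diagonal and antidiagonal obstructions are brought into this framework by first conjugating with a unimodular automorphism of $\mathbb{Z}_N^2$ such as $(j_1,j_2)\mapsto(j_1,j_1+j_2)$, which is compatible with the Cantor-FFT factorization of Lemma~\ref{l:hd-submul}. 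To treat genuinely two-dimensional $\mathcal{A}$ and $\mathcal{B}$, one iterates submultiplicativity, tracking how the linear constraints imposed by each forbidden digit propagate across the successive Cantor levels and combine with Proposition~\ref{l:hd-fup-disc-1}.

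The hard part will be the second exception in the conjecture, whose clause \emph{for each} $k$ means the obstruction can first appear at an arbitrarily deep level of the Cantor construction. Classifying exactly which alphabet pairs $(\mathcal{A},\mathcal{B})$ cause $\mathcal{C}_{k,\mathcal{A}}$ and $\mathcal{C}_{k,\mathcal{B}}$ to contain a diagonal and an antidiagonal line at every $k$ is itself a nontrivial arithmetic-combinatorial problem; and once this classification is in hand, one still needs a uniqueness-type statement for the extremizing polynomial $p$ to conclude that no other configuration can sustain equality $r_k=1$ through all levels. Neither ingredient currently has a clear candidate formulation, and this is where the proof is most likely to stall.
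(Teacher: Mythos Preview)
The statement you are attempting to prove is labeled in the paper as a \emph{Conjecture}, not a theorem or proposition. The paper does not supply a proof; it explicitly leaves this open and only establishes the partial results of Propositions~\ref{l:hds-1} and~\ref{l:hds-2} under additional structural hypotheses on the alphabets. There is therefore no ``paper's own proof'' against which your proposal can be compared.

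As for the proposal itself, it is not a proof but a sketch of a possible strategy, and you say as much in your final paragraph: the classification of alphabet pairs producing diagonal/antidiagonal lines at every level, and the uniqueness statement for the extremizing polynomial, are both identified as missing with ``no clear candidate formulation.'' These are not technical details to be filled in but the entire content of the conjecture. The polynomial-method framework you set up is reasonable as scaffolding, but the step where you ``iterate submultiplicativity, tracking how the linear constraints\ldots propagate across the successive Cantor levels'' is a description of what one would like to happen, not an argument that it does. In the one-dimensional Lemma~\ref{l:improve-0} the degree-versus-root-count comparison is a single inequality; in two variables the ideal $(z_1^{N/M}-1,\,z_2^{N/M}-1)$ gives $(N/M)^2$ constraints per forbidden digit, but these constraints are heavily dependent across different forbidden digits and across Cantor levels, and there is no evident rank bound that forces $u\equiv 0$ outside the two exceptional cases. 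Until that rank computation (or a substitute) is made precise, the argument does not close.
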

%%%%%%%%%%%%%%%%%%%%%%%%%%%%%%%%%%%%%%%%%%%%%%%%%%%%%%%%%%%%%%%%%%%%%%%%%%%%%%%%
Here a horizontal line in $\mathbb Z_M^2$ is defined as a set
of the form $\{(j,s)\mid j\in\mathbb Z_M\}$ for some $s\in \mathbb Z_M$;
a vertical line is defined similarly, replacing $(j,s)$ with $(s,j)$.
A diagonal line in $\mathbb Z_N^2$ is defined as a set of the form
$\{(j,(j+s)\bmod N)\mid j\in\mathbb Z_N\}$ for some $s\in\mathbb Z_N$;
an antidiagonal line is defined similarly, replacing $j+s$ by $s-j$.
If either case~(1) or case~(2) above hold, then one can show
that the norm in~\eqref{e:hd-fup-discrete} is equal to~1.
We note that the case~(2) in Conjecture~\ref{c:hd-fup-disc} can
arise in a non-obvious way, see Figure~\ref{f:sierpinski}.

We finish this section with two conditions under which~\eqref{e:hd-fup-discrete}
is known to hold with some $\beta>0$. The first one says that the complement of $\mathcal A$ contains
a vertical line, while $\mathcal B$ contains no horizontal line:
%%%%%%%%%%%%%%%%%%%%%%%%%%%%%%%%%%%%%%%%%%%%%%%%%%%%%%%%%%%%%%%%%%%%%%%%%%%%%%%%
\begin{prop}
  \label{l:hds-1}
Assume that (see Figure~\ref{f:hds-1}):
\begin{enumerate}
\item there exists $s\in\mathbb Z_M$ such that $(s,j)\notin\mathcal A$
for all $j\in\mathbb Z_M$, and
\item for each $t\in\mathbb Z_M$, the set $\{\ell\mid (\ell,t)\in \mathcal B\}$
is not equal to the entire $\mathbb Z_M$.
\end{enumerate}
Then~\eqref{e:hd-fup-discrete} holds with some $\beta>0$.
\end{prop}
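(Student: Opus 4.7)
The plan is to imitate in the discrete setting the proof sketched in~\S\ref{s:hd-cont} for the first bulleted result about continuous porous sets. First I would use condition~(1) on $\mathcal A$: since $\mathcal A^{(1)}:=\pi_1(\mathcal A)\subsetneq\mathbb Z_M$, one has $\mathcal C_{k,\mathcal A}\subset X_k\times \mathbb Z_N$ with $X_k:=\pi_1(\mathcal C_{k,\mathcal A})\subset \mathcal C_{k,\mathcal A^{(1)}}$, so $\indic_{\mathcal C_{k,\mathcal A}}\leq \indic_{X_k\times\mathbb Z_N}$. Factoring $\mathcal F_{N\times N}=\mathcal F_N^{(1)}\mathcal F_N^{(2)}$ into one-dimensional Fourier transforms in the two variables, and using that $\indic_{X_k\times\mathbb Z_N}$ commutes with the unitary operator $\mathcal F_N^{(2)}$, I get
\[
\|\indic_{\mathcal C_{k,\mathcal A}}\mathcal F_{N\times N}\indic_{\mathcal C_{k,\mathcal B}}\|
\leq \|\indic_{X_k\times\mathbb Z_N}\mathcal F_N^{(1)}\indic_{\mathcal C_{k,\mathcal B}}\|
=\max_{\ell_2\in\mathbb Z_N}\|\indic_{X_k}\mathcal F_N\indic_{S_{\ell_2}}\|_{\mathbb C^N\to\mathbb C^N},
\]
where $S_{\ell_2}:=\{\ell_1:(\ell_1,\ell_2)\in\mathcal C_{k,\mathcal B}\}$; the last equality uses that the operator on the left is block-diagonal in $\ell_2$.

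Next I would unpack the structure of each slice. Writing $\ell_2=\sum t_iM^i$ in base $M$, the slice $S_{\ell_2}=\{\sum a_iM^i:a_i\in\mathcal B_{t_i}\}$ is a \emph{generalized} discrete Cantor set: each of its digit alphabets $\mathcal B_{t_i}$ is, by hypothesis~(2) on $\mathcal B$, a strict subset of $\mathbb Z_M$, although the alphabets may vary with the scale. Since $X_k\subset \mathcal C_{k,\mathcal A^{(1)}}$ with $\mathcal A^{(1)}\subsetneq\mathbb Z_M$, it is enough to establish the following generalized 1D discrete FUP: for any nonempty strict subsets $\mathcal A'_i,\mathcal B'_i\subsetneq\mathbb Z_M$ ($i=0,\dots,k-1$), there exist $\beta>0$ and $C$ depending only on $M$ such that
\[
\|\indic_{\mathcal C_{\vec A'}}\mathcal F_N\indic_{\mathcal C_{\vec B'}}\|\leq CN^{-\beta},
\quad \mathcal C_{\vec A'}:=\Big\{\sum_i a_iM^i:a_i\in\mathcal A'_i\Big\}.
\]

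To prove this generalized FUP I would follow the two-step template of Theorem~\ref{t:discreteFUP}. The FFT factorization in the proof of Lemma~\ref{l:submultiplicativity} extends verbatim to generalized alphabets, because the identity $\mathcal C_{\vec A'}=M^{k_2}\,\mathcal C_{(\mathcal A'_{k_2},\dots,\mathcal A'_{k-1})}+\mathcal C_{(\mathcal A'_0,\dots,\mathcal A'_{k_2-1})}$ remains valid; this yields submultiplicativity of the generalized norms, reducing the task to a strict inequality $<1$ for some fixed~$k$, uniformly in $\vec A',\vec B'$. The main obstacle is obtaining this strict inequality: the polynomial argument of Lemma~\ref{l:improve-0} relies on a single cyclic shift of $\mathbb Z_N$ to align the Cantor alphabet with $\{0,\dots,M-2\}$ and thereby control the degree of $p(z)=\sum_\ell v(\ell)z^\ell$, whereas in the generalized setting no global shift can simultaneously compress every digit alphabet. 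I would overcome this by exploiting the multiplicative structure of the forced roots, namely that each $s\in\mathbb Z_M\setminus\mathcal A'_i$ forces $p(z)$ to be divisible by an explicit lacunary factor $z^{M^{k-1}}-\omega_M^{s}$ (for $i=0$, with rotated analogues for higher~$i$), and then iterating the resulting tensor decomposition $\mathbb C^N\cong\mathbb C^M\otimes\mathbb C^{M^{k-1}}$ against the digit-wise support constraint $\supp v\subset\mathcal C_{\vec B'}$ to force $v=0$ for $k$ sufficiently large. A cleaner alternative is to transfer to the continuous setting: the $N^{-1}$-neighborhoods of $\mathcal C_{\vec A'}$ and $\mathcal C_{\vec B'}$, rescaled to $[0,1]$, are $\nu$-porous on scales $N^{-1}$ to $1$ with $\nu=\nu(M)>0$, so Theorem~\ref{t:fup-porous} delivers the required $\beta>0$, which can then be transferred back to the discrete norm via the correspondence noted in the remark following Theorem~\ref{t:discreteFUP}.
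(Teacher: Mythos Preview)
Your reduction to one-dimensional slices is correct and is essentially the same first step the paper takes: both arguments pass from the two-dimensional operator to the family of one-dimensional operators $\indic_{X_k}\mathcal F_N\indic_{S_{\ell_2}}$, where $X_k=\pi_1(\mathcal C_{k,\mathcal A})$ is a fixed-alphabet Cantor set missing the digit $s$, and $S_{\ell_2}$ is a varying-alphabet Cantor set each of whose digit alphabets is a proper subset of $\mathbb Z_M$.

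Where you diverge is the finish. You try to establish a full quantitative generalized one-dimensional FUP (with some exponent $\beta>0$) for arbitrary proper varying alphabets, and then run into genuine difficulties. The paper does something much simpler: it only needs $r_k<1$ for a single large $k$ (then invokes Lemma~\ref{l:hd-submul}), and for this the polynomial argument of Lemma~\ref{l:improve-0} applies almost verbatim to each slice. The point is that the two roles in that argument are asymmetric. The \emph{Fourier-support} side uses only that $X_k$ misses an interval of length $M^{k-1}$ (namely $\{sM^{k-1},\dots,(s+1)M^{k-1}-1\}$); after the cyclic shift this bounds the degree of the polynomial by $(M-1)M^{k-1}-1$, so if $u_{\ell_2}\neq 0$ then $\#\supp u_{\ell_2}>M^{k-1}$. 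The \emph{physical-support} side uses only the crude count $\#S_{\ell_2}\le (M-1)^k$, which holds for any varying-alphabet Cantor set with proper alphabets. For $k$ large enough that $(M-1)^k\le M^{k-1}$ one gets a contradiction. No generalized submultiplicativity or uniform-in-$\vec B'$ argument is needed.

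Your specific approach (a) also contains an error: the claimed divisibility by $z^{M^{k-1}}-\omega_M^s$ corresponds to vanishing of the \emph{Fourier transform} at indices $\equiv s\pmod M$, not to the support condition $a_0\neq s$ on $v$ itself; you have swapped the two sides of the Fourier duality. Approach (b), transferring to the continuous porous FUP, would ultimately work, but the correspondence you cite is stated only for fixed-alphabet Cantor sets, so you would need to redo that transfer for varying alphabets. This is doable but considerably heavier than the paper's two-line counting argument.
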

%%%%%%%%%%%%%%%%%%%%%%%%%%%%%%%%%%%%%%%%%%%%%%%%%%%%%%%%%%%%%%%%%%%%%%%%%%%%%%%%
\begin{figure}
\includegraphics[width=5cm]{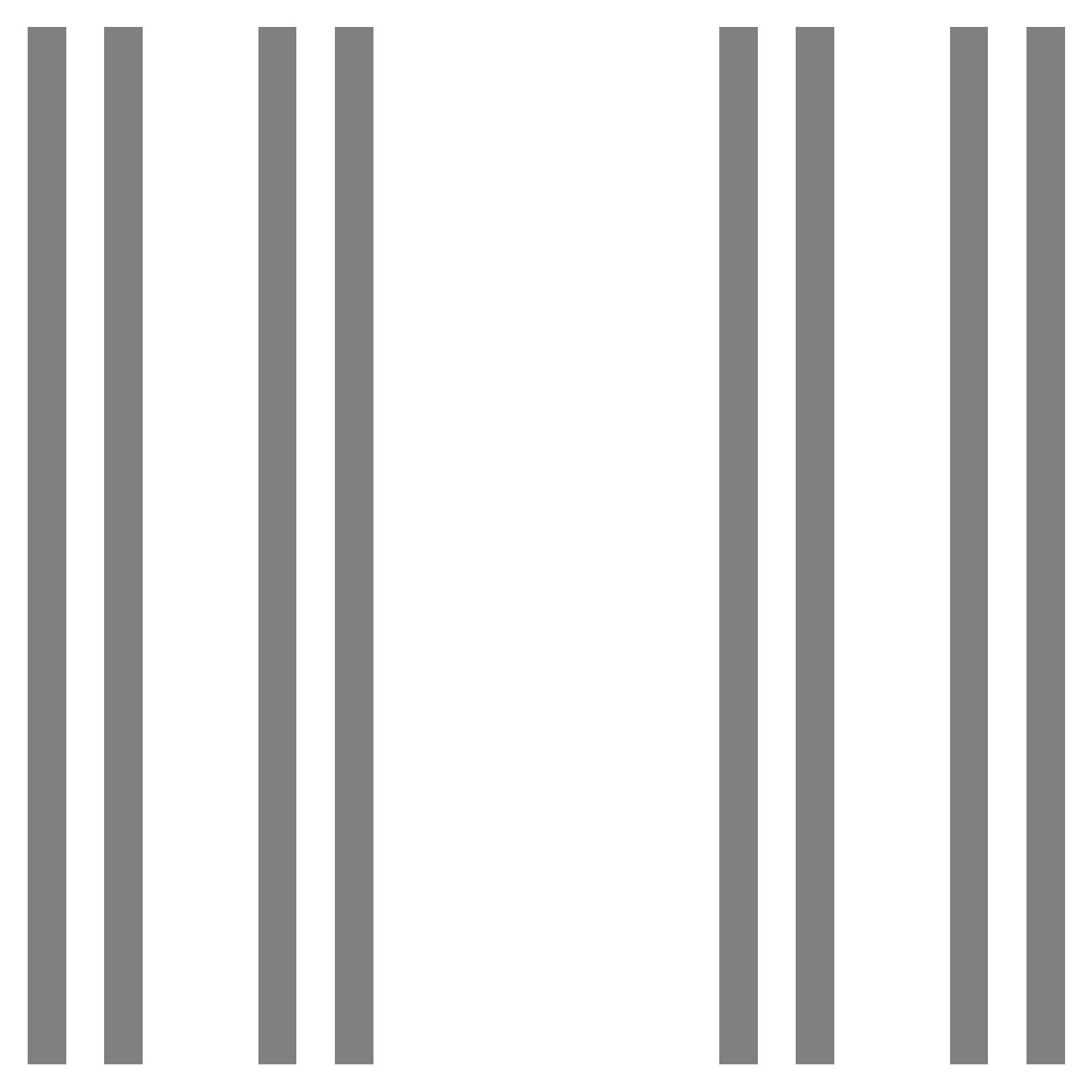}
\qquad\qquad
\includegraphics[width=5cm]{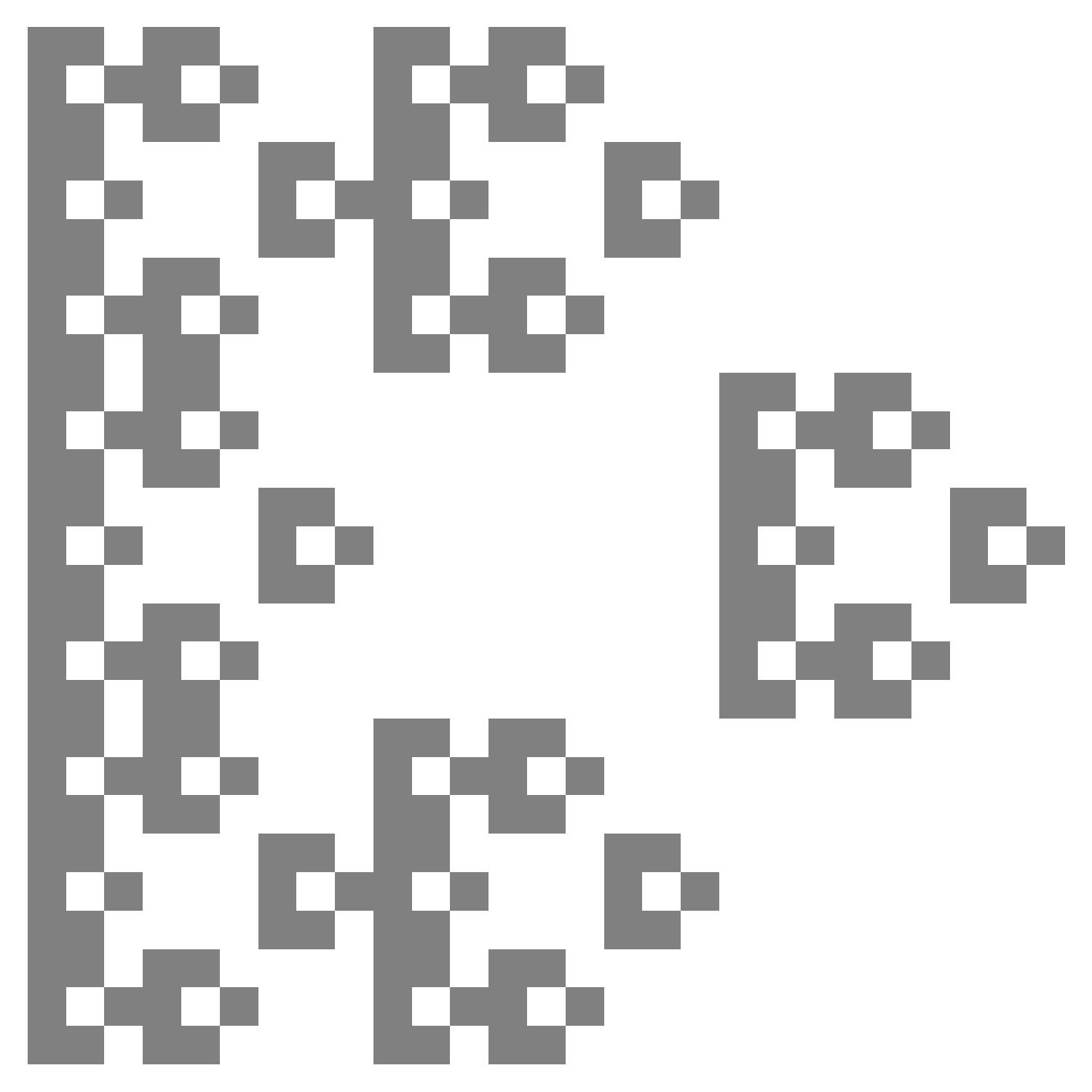}
\hbox to\hsize{\hss $\mathcal C_{k,\mathcal A}$ \hss\qquad\qquad $\mathcal C_{k,\mathcal B}$ \hss}
\caption{An example of Cantor sets with $M=3$, $k=3$ satisfying the conditions
of Proposition~\ref{l:hds-1}.}
\label{f:hds-1}
\end{figure}
%%%%%%%%%%%%%%%%%%%%%%%%%%%%%%%%%%%%%%%%%%%%%%%%%%%%%%%%%%%%%%%%%%%%%%%%%%%%%%%%
\begin{proof}
By Lemma~\ref{l:hd-submul}, it suffices to show that
the left-hand side of~\eqref{e:hd-fup-discrete} is $<1$ for some value of $k$.
We argue by contradiction, assuming that this left-hand side is equal to~1.
Similarly to Lemma~\ref{l:improve-0}, there then exists nonzero $u\in\mathbb C^{N\times N}$
such that 
\begin{equation}
  \label{e:hds-1-0}
\supp u\subset \mathcal C_{k,\mathcal B},\quad
\supp (\mathcal F_{N\times N}u)\subset \mathcal C_{k,\mathcal A}.
\end{equation}
For every $\ell_2\in\mathbb Z_N$, define $u_{\ell_2}\in\mathbb C^N$
by $u_{\ell_2}(\ell_1)=u(\ell_1,\ell_2)$. Denote by
$\mathcal D\subset\mathbb Z_N$ the set of all numbers of the form
$d_0+d_1M+\dots+d_{k-1}M^{k-1}$ where $d_0,d_1,\dots,d_{k-1}\neq s$.
Then by condition~(1) the projection of $\supp \mathcal F_{N\times N}u$ onto the first coordinate lies inside $\mathcal D$. Writing
$$
\mathcal F_Nu_{\ell_2}(j_1)={1\over\sqrt N}\sum_{j_2=0}^{N-1}\exp\Big({2\pi ij_2\ell_2\over N}\Big)\mathcal F_{N\times N}u(j_1,j_2)
$$
we see that
$$
\supp(\mathcal F_Nu_{\ell_2})\subset \mathcal D\quad\text{for all}\quad \ell_2\in\mathbb Z_N.
$$
In particular, $\supp(\mathcal F_Nu_{\ell_2})$ does not intersect
some interval of length $M^{k-1}$. Arguing as in the proof of Lemma~\ref{l:improve-0} we see
that
\begin{equation}
  \label{e:hds-1-1}
\text{for each }\ell_2\in\mathbb Z_N,\quad
\text{either }\supp u_{\ell_2}=\emptyset\quad
\text{or }\#(\supp u_{\ell_2})>M^{k-1}.
\end{equation}
On the other hand, by condition~(2) the support of each $u_{\ell_2}$
has at most $(M-1)^k$ elements. Choosing $k$ large enough so that
$(M-1)^k\leq M^{k-1}$ we get that $u\equiv 0$, giving a contradiction.
\end{proof}
%%%%%%%%%%%%%%%%%%%%%%%%%%%%%%%%%%%%%%%%%%%%%%%%%%%%%%%%%%%%%%%%%%%%%%%%%%%%%%%%
The second condition can be viewed as a special case of the work of Han--Schlag~\cite{HanSchlag}
presented in~\S\ref{s:hd-cont}, letting $\mathcal B$ be any alphabet not equal to the entire
$\mathbb Z_M^2$ and requiring that the complement of $\mathcal A$ contain both a horizontal and a vertical line:
%%%%%%%%%%%%%%%%%%%%%%%%%%%%%%%%%%%%%%%%%%%%%%%%%%%%%%%%%%%%%%%%%%%%%%%%%%%%%%%%
\begin{prop}
  \label{l:hds-2}
Assume that (see Figure~\ref{f:hds-2}):
\begin{enumerate}
\item there exist $s,t\in\mathbb Z_M$ such that $(s,j)\notin\mathcal A$
and $(j,t)\notin\mathcal A$ for all~$j$, and
\item $\mathcal B$ is not equal to the entire $\mathbb Z_M^2$.
\end{enumerate}
Then~\eqref{e:hd-fup-discrete} holds with some $\beta>0$.
\end{prop}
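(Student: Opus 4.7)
The plan is to argue by contradiction, combining the submultiplicativity of Lemma~\ref{l:hd-submul} with a polynomial factorization of the type used in Lemma~\ref{l:improve-0}. By submultiplicativity it suffices to exhibit a single value of~$k$ with $r_k<1$, so I will assume $r_k=1$ for every~$k$ and for each such~$k$ obtain a nonzero $u\in\mathbb C^{N\times N}$ with $\supp u\subset\mathcal C_{k,\mathcal B}$ and $\supp(\mathcal F_{N\times N}u)\subset\mathcal C_{k,\mathcal A}$.

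The first key step uses condition~(1). Set $D_1:=\mathcal C_{k,\mathbb Z_M\setminus\{s\}}$ and $D_2:=\mathcal C_{k,\mathbb Z_M\setminus\{t\}}$; condition~(1) gives $\mathcal C_{k,\mathcal A}\subset D_1\times D_2$. Encode $u$ as the bivariate polynomial $P(x,y):=\sum u(\ell_1,\ell_2)\,x^{\ell_1}y^{\ell_2}$, so that $\mathcal F_{N\times N}u(j_1,j_2)=N^{-1}P(\omega^{-j_1},\omega^{-j_2})$ with $\omega=e^{2\pi i/N}$. For each fixed $j_1\notin D_1$ the polynomial $y\mapsto P(\omega^{-j_1},y)$ has degree $\leq N-1$ yet vanishes at all $N$ roots of unity, so it vanishes identically; analogously for $j_2\notin D_2$. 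Since $f(x)\in\mathbb C[x]$ and $g(y)\in\mathbb C[y]$ are coprime inside $\mathbb C[x,y]$, this forces the factorization
\[
P(x,y)=f(x)\,g(y)\,S(x,y),\qquad f(x):=\prod_{j_1\notin D_1}(x-\omega^{-j_1}),\quad g(y):=\prod_{j_2\notin D_2}(y-\omega^{-j_2}),
\]
with $\deg_x S,\deg_y S\leq(M-1)^k-1$, so $S$ has at most $(M-1)^{2k}$ free coefficients.

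The second step uses condition~(2). Pick $(p,q)\notin\mathcal B$; then $u(\ell_1,\ell_2)=0$ whenever the $i$th base-$M$ digit pair of $(\ell_1,\ell_2)$ equals $(p,q)$ for some $0\leq i\leq k-1$. This furnishes at least $M^{2k}-(M^2-1)^k$ homogeneous linear equations on the coefficients of $P=fgS$, and hence on the coefficients of~$S$. Since both $((M-1)/M)^{2k}$ and $(1-1/M^2)^k$ tend to~$0$, we have $(M-1)^{2k}+(M^2-1)^k<M^{2k}$ once $k$ is large, so the linear system is dimensionally overdetermined and, if it admits only the trivial solution, forces $S\equiv0$, whence $u\equiv0$, the sought contradiction.

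The main obstacle is promoting this dimension count into a genuine rank statement. The map $S\mapsto fgS$ is a two-dimensional discrete convolution, and a priori the vanishing conditions coming from different positions~$i$ in the hierarchical complement of $\mathcal C_{k,\mathcal B}$ may fail to be linearly independent. To resolve this I would adapt the rank analysis of Han--Schlag~\cite[Theorem~1.2]{HanSchlag}, exploiting the multi-scale product structure of $\mathbb Z_N^2\setminus\mathcal C_{k,\mathcal B}$: at the finest scale the vanishing conditions already carve out a proper subspace of $S$, and one iterates upward through the scales to force $S\equiv0$. A potentially cleaner alternative would be an FFT-style induction on~$k$ using Lemma~\ref{l:submultiplicativity} paired with Proposition~\ref{l:hds-1} applied to slices $u(\cdot,\ell_2)$ indexed by $\ell_2$ containing at least one digit equal to~$q$, followed by a symmetric pass in the orthogonal direction to sweep up the remaining components.
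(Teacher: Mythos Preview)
Your bivariate factorization $P=f(x)g(y)S(x,y)$ is correct, and the dimension count $(M-1)^{2k}+(M^2-1)^k<M^{2k}$ holds for large~$k$. But as you yourself flag, this count does not by itself force $S\equiv 0$: the image of $S\mapsto fgS$ is an $(M-1)^{2k}$-dimensional subspace of the $M^{2k}$-dimensional coefficient space, the support constraint $\supp u\subset\mathcal C_{k,\mathcal B}$ cuts out an $(M^2-1)^k$-dimensional subspace, and nothing prevents these two concrete subspaces from meeting nontrivially even though \emph{generic} subspaces of those dimensions would not. Your proposed fixes are not worked out: the Han--Schlag rank analysis is invoked as a black box, and your ``FFT-style'' sweep over slices $u(\cdot,\ell_2)$ with a digit equal to~$q$ does not match the hypotheses of Proposition~\ref{l:hds-1}, which requires every horizontal slice of~$\mathcal B$ to be incomplete, not merely one missing point of~$\mathcal B$. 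So the proposal, as written, has a genuine gap at exactly the point you identify.

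The paper's proof bypasses the linear-algebra question with a combinatorial two-pass argument. The missing vertical line in~$\mathcal A$ forces every nonempty horizontal slice of $\supp u$ to have more than $M^{k-1}$ points (this is the one-variable root-counting from Lemma~\ref{l:improve-0}, applied row by row as in~\eqref{e:hds-1-1}). Now fix $(s',t')\notin\mathcal B$ and call $b\in\mathbb Z_N$ \emph{thick} if at least $k_0$ of its base-$M$ digits equal~$t'$, where $k_0$ is chosen so that $(1-1/M)^{k_0}\leq 1/M$. For thick~$b$ the horizontal slice of $\supp u\subset\mathcal C_{k,\mathcal B}$ at height~$b$ lies in $\{a:a_{r}\neq s'\text{ whenever }b_{r}=t'\}$, hence has at most $(M-1)^{k_0}M^{k-k_0}\leq M^{k-1}$ points and must be empty. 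Thus $\pi_2(\supp u)$ consists only of thin~$b$, at most $\binom{k}{k_0}M^{k_0}(M-1)^{k-k_0}$ of them. Now run the same slice argument vertically, using the missing horizontal line in~$\mathcal A$: every nonempty vertical slice has more than $M^{k-1}$ points, but for~$k$ large the thin count falls below $M^{k-1}$, forcing $u\equiv 0$. The thin/thick dichotomy---using condition~(2) not as a source of linear equations on~$S$ but as a pointwise upper bound on horizontal slices that plays against the lower bound from condition~(1)---is the missing idea in your proposal.
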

%%%%%%%%%%%%%%%%%%%%%%%%%%%%%%%%%%%%%%%%%%%%%%%%%%%%%%%%%%%%%%%%%%%%%%%%%%%%%%%%
\begin{figure}
\includegraphics[width=5cm]{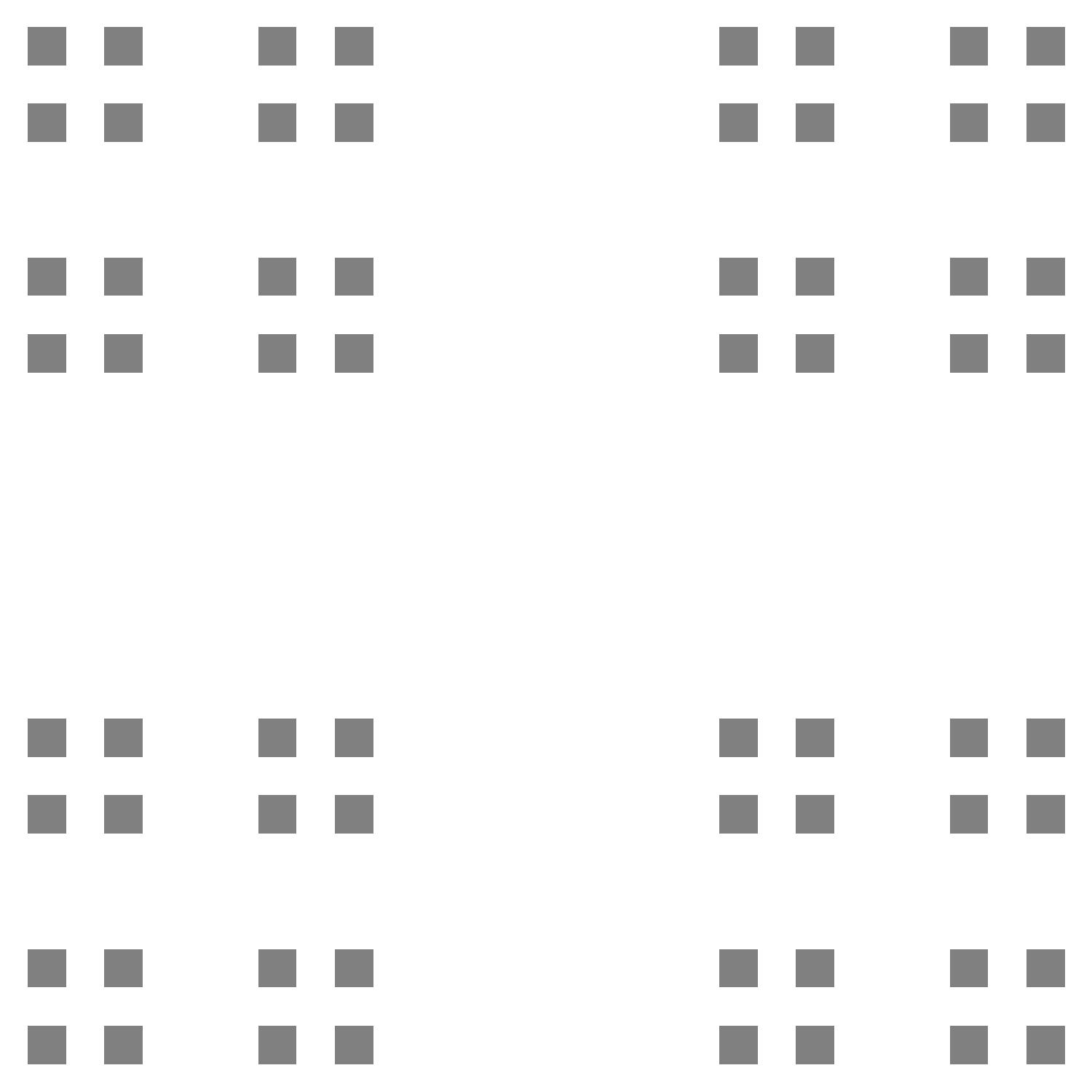}
\qquad\qquad
\includegraphics[width=5cm]{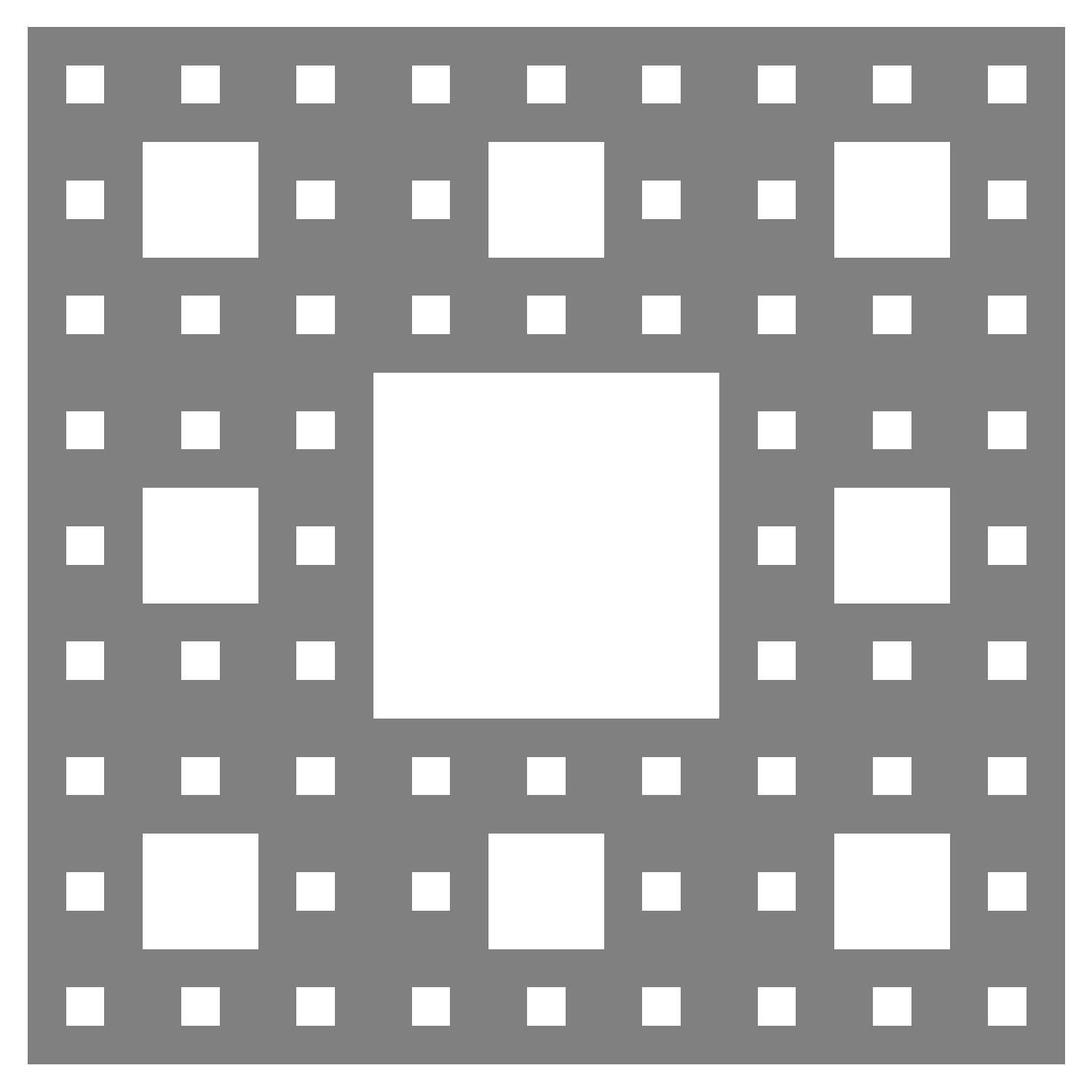}
\hbox to\hsize{\hss $\mathcal C_{k,\mathcal A}$ \hss\qquad\qquad $\mathcal C_{k,\mathcal B}$ \hss}
\caption{An example of Cantor sets with $M=3$, $k=3$ satisfying the conditions
of Proposition~\ref{l:hds-2}.}
\label{f:hds-2}
\end{figure}
%%%%%%%%%%%%%%%%%%%%%%%%%%%%%%%%%%%%%%%%%%%%%%%%%%%%%%%%%%%%%%%%%%%%%%%%%%%%%%%%
\begin{proof}
We argue similarly to the proof of Proposition~\ref{l:hds-1}, assuming the
existence of nonzero $u$ satisfying~\eqref{e:hds-1-0}. By the first part of condition~(1) we see that
$\supp u$ satisfies~\eqref{e:hds-1-1};
that is, the intersection of $\supp u$ with each horizontal line is either empty
or contains $>M^{k-1}$ points.
We fix $k_0\geq 0$ such that
\begin{equation}
  \label{e:hds-2-0}
\Big(1-{1\over M}\Big)^{k_0}\leq {1\over M}
\end{equation}
and take $k\geq k_0$.
By condition~(2) there exists $(s',t')\in\mathbb Z_M^2\setminus \mathcal B$.
For $b=b_0+b_1M+\dots+b_{k-1}M^{k-1}$, where $b_0,\dots,b_{k-1}\in \mathbb Z_M$,
we say that $b$ is \emph{thin} if at most $k_0$ digits $b_0,\dots,b_{k-1}$
are equal to~$t'$, and \emph{thick} otherwise.

If $b$ is thick, then the set $\{a\in\mathbb Z_N\mid (a,b)\in \supp u\}$
contains at most $(M-1)^{k_0}M^{k-k_0}$ points. Indeed, writing
$b$ as above and $a=a_0+a_1M+\dots +a_{k-1}M^{k-1}$, if
$(a,b)\in\supp u\subset\mathcal C_{k,\mathcal B}$,
then for each $r=0,\dots,k-1$ we have
$(a_r,b_r)\neq (s',t')$. Since $b$ is thick,
there exist $r_1<r_2<\dots<r_{k_0}$ such that $b_{r_1}=\dots=b_{r_{k_0}}=t'$.
Then $a$ has to satisfy the conditions
$a_{r_1},\dots,a_{r_{k_0}}\neq s'$, and the number of such points~$a$
is equal to $(M-1)^{k_0}M^{k-k_0}$.

By~\eqref{e:hds-2-0} we have $(M-1)^{k_0}M^{k-k_0}\leq M^{k-1}$. Then by~\eqref{e:hds-1-1}
we see that for each thick $b$, the intersection of $\supp u$ with the
horizontal line $\{(a,b)\mid a\in\mathbb Z_N\}$ is empty.
Thus the projection $\pi_2(\supp u)$ of $\supp u$ onto the second coordinate is contained
inside the set of all thin elements of $\mathbb Z_N$, giving
\begin{equation}
  \label{e:hds-2-1}
\#(\pi_2(\supp u))\leq \binom{k}{k_0}M^{k_0}(M-1)^{k-k_0}.
\end{equation}
On the other hand, using the second part of condition~(1) similarly to~\eqref{e:hds-1-1}
we see that the intersection of $\supp u$ with each vertical line is either
empty or contains $>M^{k-1}$ points. Choose $k$ large enough
(depending on $k_0$) so
that the left-hand side of~\eqref{e:hds-2-1} is $\leq M^{k-1}$.
Then we see that $u\equiv 0$, giving a contradiction.
\end{proof}
%%%%%%%%%%%%%%%%%%%%%%%%%%%%%%%%%%%%%%%%%%%%%%%%%%%%%%%%%%%%%%%%%%%%%%%%%%%%%%%%

%%%%%%%%%%%%%%%%%%%%%%%%%%%%%%%%%%%%%%%%%%%%%%%%%%%%%%%%%%%%%%%%%%%%%%%%%%%%%%%%
%%%%%%%%%%%%%%%%%%%%%%%%%%%%%%%%%%%%%%%%%%%%%%%%%%%%%%%%%%%%%%%%%%%%%%%%%%%%%%%%
\medskip\noindent\textbf{Acknowledgements.}
The author was supported by the NSF CAREER grant DMS-1749858
and a Sloan Research Fellowship.
Part of this article originated as lecture notes for the minicourse
on fractal uncertainty principle at the Third Symposium on Scattering and Spectral Theory in Florianopolis, Brazil, July 2017, and another part as lecture notes for
the Emerging Topics Workshop on
quantum chaos and fractal uncertainty principle at the Institute for Advanced Study,
Princeton, October 2017.
The numerics used to plot Figures~\ref{f:sch1}, \ref{f:3f}, and~\ref{f:ae} were originally developed for an undergraduate project at MIT joint with Arjun Khandelwal during the 2015--2016 academic year.
The author thanks the anonymous referee for a careful reading of the manuscript
and many suggestions to improve the presentation.

%%%%%%%%%%%%%%%%%%%%%%%%%%%%%%%%%%%%%%%%%%%%%%%%%%%%%%%%%%%%%%%%%%%%%%%%%%%%%%%%

\end{document}